%
%
%
%
\documentclass{amsart}
\usepackage{amsmath,amssymb}
\theoremstyle{plain}
\newtheorem{thm}{Theorem}[section]
\newtheorem{lem}[thm]{Lemma}
\newtheorem{cor}[thm]{Corollary}

\theoremstyle{definition}
\newtheorem{defn}[thm]{Definition}

\newtheorem{rmk}[thm]{Remark}

\usepackage{graphicx,epsfig}

\title[The piecewise disjoint arc-disk property]{Detecting codimension one manifold factors with the piecewise disjoint arc-disk property and related properties}

\author[D. M. Halverson]{Denise M. Halverson}
\address{Department of Mathematics, Brigham Young University, Provo, UT 84602}
\email{halverson@math.byu.edu}

\author[D. Repov\v s]{Du\v san Repov\v s}
\address{Faculty of Education, and Faculty of Mathematics and Physics,
University of Ljubljana,
P.O. Box 2964,
Ljubljana, Slovenia 1001}
\email{dusan.repovs@guest.arnes.si}

\begin{document}

\begin{abstract}
We show that all finite-dimensional resolvable generalized manifolds with the piecewise disjoint arc-disk property are codimension one mani\-fold factors. We then show how the piecewise disjoint arc-disk property and other general position properties that detect codimension one manifold factors are related.  We also note that in every example presently known to the authors of a codimension one manifold factor of dimension $n\geq 4$ determined by general position properties, the piecewise disjoint arc-disk property is satisfied.
\end{abstract}

\date{\today}

\keywords{Piecewise disjoint arc-disk property,  general position, codimension one manifold factor, Generalized Moore Problem, totally wild flow, ghastly generalized manifold, plentiful 2-manifolds property, $0$-stitched disks, $\delta$-fractured maps, fractured maps property, crinkled ribbons property, fuzzy ribbons property, disjoint homotopies property, disjoint topographies property, disjoint concordances}

\subjclass[2010]{Primary 57N15, 57N75; Secondary 57P99, 53C70}

\maketitle

\section{Introduction}

A space $X$ is said to be a \emph{codimension one manifold factor}
provided that $X \times \mathbb{R}$ is a topological manifold.  The Product With a Line Problem is a long standing unsolved problem which asks whether or not all resolvable
generalized manifolds are codimension one manifold factors \cite{Daverman S, Bertinoro, Moore1, Moore2}. The purpose of this paper is twofold:  (1) to introduce a new unifying general position property, called the \emph{piecewise disjoint arc-disk property}, and its $1$-complex analogue, the \emph{piecewise disjoint arc-disk property*}; and (2)  demonstrate how the various general position properties known to detect codimension one manifold factors are related.

The piecewise disjoint arc-disk property is a general position property that captures the essence of why spaces arising from certain generalized constructions are codimension one manifold factors. The main result of this paper is the following theorem:

\begin{thm} \label{main thm}
If $X$ is a resolvable generalized $n$-manifold that satisfies the piecewise disjoint arc-disk property, then $X \times \mathbb{R}$ is an $(n+1)$-manifold.
\end{thm}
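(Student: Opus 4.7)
The plan is to show that $X \times \mathbb{R}$ satisfies the \emph{disjoint disks property} (DDP), and then invoke Edwards' cell-like approximation theorem, which recognizes a resolvable generalized $m$-manifold (with $m \ge 5$) as a topological manifold exactly when it satisfies the DDP. Since $X$ is a resolvable generalized $n$-manifold, $X \times \mathbb{R}$ is a resolvable generalized $(n+1)$-manifold, so the whole proof reduces to verifying the DDP for $X \times \mathbb{R}$ (with the low-dimensional cases absorbed into standard reductions).

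Given maps $f, g : D^2 \to X \times \mathbb{R}$ and $\varepsilon > 0$, the strategy is to exploit the $\mathbb{R}$-coordinate as a bookkeeping device. I would first subdivide the domain of $g$ into small pieces $P_1, \dots, P_k$ on which the image of $g$ has small diameter; then approximate $g$ so that each $g(P_i)$ is confined to a thin slab $X \times J_i$, arranging the intervals $J_i$ so that each overlaps only its immediate neighbors. A perturbation of $g$ supported in $X \times J_i$ then affects only the image of $P_i$, so the pieces can be treated one at a time while keeping previous progress undisturbed.

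On each piece the goal is $f(D^2) \cap g(P_i) = \emptyset$. This is exactly where the \emph{piecewise disjoint arc-disk property} is designed to do its work: it should supply, for each $i$, an approximation of $g|_{P_i}$ whose image is disjoint in $X$ from an appropriate arc (or small subdisk) extracted from the corresponding slice of $f(D^2)$, while pinning down the boundary $\partial P_i$ so that the pieces reglue to a map $g'$ close to $g$. Carrying this out inductively over $i$, and using the staggered $\mathbb{R}$-levels to confine each correction to its own slab, should yield approximations $f'$ and $g'$ with $f'(D^2) \cap g'(D^2) = \emptyset$, establishing the DDP.

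The step I expect to be the main obstacle is the compatibility bookkeeping between the two coordinate directions. The $X$-direction adjustments supplied by the piecewise disjoint arc-disk property typically require certain arcs or subdisks to be held fixed in advance, while the $\mathbb{R}$-coordinate shifts must be small enough to preserve those fixings but large enough to genuinely separate adjacent slabs. Equally delicate is ensuring that the ``arc'' and ``disk'' data extracted from $f(D^2)$ on each overlap region really do satisfy the hypotheses of the piecewise disjoint arc-disk property; getting the decomposition of the domain and the choice of auxiliary arcs in $f(D^2)$ to match up is, I expect, the technical heart of the argument.
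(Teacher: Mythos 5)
Your proposal takes a genuinely different route from the paper. The paper does not attack the disjoint disks property of $X \times \mathbb{R}$ directly. Instead, Theorem \ref{main thm} is obtained as a corollary of two implications proved in the paper and in prior work: first, Theorem \ref{FM} shows that the P-DADP is equivalent to the (modified) $\delta$-fractured maps property; second, Theorem \ref{FM thm} shows that the $\delta$-FMP implies the disjoint homotopies property; and finally the paper cites the already-known fact (from Halverson's earlier work) that a resolvable generalized manifold with the DHP is a codimension one manifold factor. Only that last cited step actually descends to DDP for $X \times \mathbb{R}$ and Edwards' cell-like approximation theorem, so all of the level-preserving, slab-by-slab bookkeeping you are anticipating is absorbed into the reference rather than re-derived.

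The gap in your proposal is concrete: in the P-DADP one moves a $1$-dimensional object (an arc $\alpha$) off a $2$-cell $\sigma$ of a subdivided disk, through a homotopy that stays off the image of the $1$-skeleton $f'(T^{(1)})$. What you propose to disjoint are the images $f(D^2)$ and $g(P_i)$, both of which are $2$-dimensional in $X \times \mathbb{R}$; the P-DADP does not push a disk-piece off a disk. The dimensional reduction that makes this work — replacing disks in $X \times \mathbb{R}$ by level-preserving maps whose slices are arcs or $1$-complexes, and then disjointing level by level — is precisely the machinery of the disjoint homotopies property, which you have not set up. You also subdivide the domain of $g$, whereas P-DADP subdivides the \emph{disk} $f$ and leaves the arc whole; the $\mathbb{R}$-coordinate staggering you describe must therefore be coordinated with the subdivision $T$ of $f$, not of $g$, and it must be reconciled with the requirement that the correcting homotopies $H_\sigma$ avoid $f'(T^{(1)})$. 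Your sketch names this as the ``technical heart'' and leaves it open, but it is exactly the content that the paper packages into $\delta$-FMP $\Rightarrow$ DHP (via the reparametrization of a constant homotopy to dodge the small balls $B_j$ one level at a time) and DHP $\Rightarrow$ codimension one manifold factor. Without supplying some version of that reduction, the appeal to P-DADP does not yet produce the DDP of $X \times \mathbb{R}$.
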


\noindent We actually detail the proof of the theorem in the case that the piecewise disjoint arc-disk property is replaced with the piecewise disjoint arc-disk property*, a slightly stronger property immediately implying piecewise disjoint arc-disk property.  But we introduce both properties to more thoroughly delineate the relationships with other general position properties.

The importance of the piecewise disjoint arc-disk property can be seen from its unifying perspective.  In all examples of codimension one manifold factors detected by general position properties currently known to the authors, the underlying nature inherent to the piecewise disjoint arc-disk property has provided the needed utility to demonstrate that a decomposition space of dimension $n \geq 4$ is a codimension one manifold factor.

In this paper, we will also illustrate how the piecewise disjoint arc-disk property, the piecewise disjoint arc-disk property*,  and other general position properties used in the detection of codimension one manifolds factors are related. We will introduce definitions for (a) a modified version of $\delta$-fractured maps, (b) the $\delta$-fractured maps property with respect to the modified definition, (c) the closed $0$-stitched disks property, and (d) the strong fuzzy ribbons property.   We will demonstrate that in the case of resolvable generalized manifolds: (1) the modified $\delta$-fractured maps property implies the disjoint homotopies property, (2) the closed $0$-stitched disks property implies the $\delta$-fractured maps property, (3)  the piecewise disjoint arc-disk property is equivalent to the $\delta$-fractured maps property, and (4) the piecewise disjoint arc-disk property* is equivalent to the strong fuzzy ribbons property.  We will also note several other implications that have either been previously proven or are fairly straightforward.

The importance of general position properties in detecting  codimension one manifold factors of dimension $n\geq 4$ is derived from the role of the disjoint disks property in characterizing manifolds of dimension $n \geq 5$ \cite{Daverman S, Daverman book,  Daverman-Halverson, Edwards 2}.   General position properties that are effective in detecting codimension one manifold factors of dimension $n\geq 4$ can be found in \cite{Daverman book, Daverman-Halverson 2, Halverson 1, Halverson 2, Halverson 3, HaRe2}.    A more general and extensive discussion of various types of general position properties can also be found in a recent study by Banakh and Valov \cite{Banakh-Valov}.

\section{Preliminaries}

Throughout this paper, we assume that spaces are finite dimensional.  We begin with some basic definitions and notation.


A compact subset $C$ of a space $X$ is said to be \emph{cell-like} if for each neighborhood $U$ of $C$ in $X$, $C$ can be contracted  to a point in $U$ \cite{MiRe}.  A space $X$ is said to be \emph{resolvable} if there is a manifold $M$ and a proper surjective map $f:M \to X$ so that for each $x \in X$, $f^{-1}(x)$ is cell-like.

Finite dimensional resolvable spaces are known to be ANR's (i.e., locally contractible, locally compact, separable metric spaces) \cite{MiRe}.  The following theorems illustrate the useful extension properties that ANR's possess, which will be applied freely in this paper:

\begin{thm} [Homotopy Extension Theorem]

Suppose that $f:Y \to X$ is a continuous map where $Y$ is a metric space and $X$ is an ANR, $Z$ is a compact subset of $Y$ and $\epsilon > 0$. Then there exists $\delta > 0$ such that each $g_Z:Z \to X$ which is $\delta$-close to  $f|_Z$ extends to a map $g:Y \to X$ so that $g$ is $\epsilon$-homotopic to $f$. In particular, for any open set $U$ such that $Z \subset U \subset Y$, there is a homotopy $H:Y \times I \to X$ so that
\begin{enumerate}
    \item $H_0 = f$ and $H_1 = g$;
    \item $g|_Z = g_Z$;
    \item $H_t|_{Y-U} = f|_{Y-U}$ for all $t \in I$; and
    \item $diam(H(y \times I)) < \epsilon$ for all $y \in Y$.
\end{enumerate}
\label{HET}

\end{thm}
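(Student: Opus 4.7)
The plan is to prove this via the standard technique of embedding $X$ in a normed linear space, exploiting a neighborhood retraction, and performing a controlled straight-line homotopy in the ambient space that is pulled back to $X$ via the retraction. Compactness of $Z$ is what renders the estimates uniform.

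First, I would embed $X$ isometrically as a closed subspace of a normed linear space $E$, for instance via the Arens--Eells embedding. Since $X$ is an ANR, there is an open neighborhood $W$ of $X$ in $E$ and a retraction $r\colon W \to X$. The compact set $f(Z)$ sits inside $W$, so by a standard compactness/Lebesgue-number argument applied to the continuity of $r$, one produces $\eta > 0$ such that the $2\eta$-neighborhood $N$ of $f(Z)$ is contained in $W$ and $r$ has oscillation less than $\epsilon$ on every subset of $N$ of diameter at most $\eta$. Set $\delta := \eta/2$.

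Given $g_Z$ which is $\delta$-close to $f|_Z$, form $h_Z := g_Z - f|_Z \colon Z \to E$, so that $\|h_Z(z)\| < \delta$ pointwise. Use Dugundji's extension theorem to extend $h_Z$ to $h \colon Y \to E$ whose image lies in the convex hull of $h_Z(Z)$; in particular $\|h(y)\| < \delta$ for every $y \in Y$. By continuity of $f$ and compactness of $Z$, pick an open $V$ with $Z \subset V \subset \overline{V} \subset U$ and $f(V) \subset N_{\eta/2}(f(Z))$, together with a Urysohn function $\phi \colon Y \to [0, 1]$ satisfying $\phi \equiv 1$ on $Z$ and $\operatorname{supp} \phi \subset V$. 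Now define
\[
F(y, t) := f(y) + t\phi(y) h(y), \qquad H := r \circ F, \qquad g := H_1.
\]
For $y \notin V$ one has $F(y, t) = f(y) \in X$; for $y \in V$, the estimate $\|F(y, t) - f(y)\| \leq \delta$ and the inclusion $f(y) \in N_{\eta/2}(f(Z))$ together place the whole segment $F(\{y\} \times I)$ inside $N$ with diameter at most $\eta$, so $F$ takes values in $W$ throughout. Verifying the four conditions is then immediate: $H_0 = r \circ f = f$; for $z \in Z$, $F(z, 1) = g_Z(z) \in X$ gives $H_1|_Z = g_Z$; $\phi$ vanishes on $Y \setminus U$, yielding (3); and $\operatorname{diam} H(\{y\} \times I) < \epsilon$ by the oscillation estimate on $r$, with diameter zero wherever $\phi(y) = 0$.

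The principal obstacle is securing the uniform oscillation estimate for $r$ used to select $\eta$. Without compactness of $f(Z)$, the retraction $r$ need not be uniformly continuous on any neighborhood of $X$, and a single $\eta$ controlling $r$-distortion across the whole of $Z$ would not be available. This is also the reason for localizing the perturbation via the Urysohn cutoff $\phi$ supported near $Z$: it confines the straight-line adjustment to the region where the retraction behaves predictably, so the estimates transfer to all of $Y$ at no extra cost via condition (3).
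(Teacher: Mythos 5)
The paper does not actually give a proof of this theorem: it is recorded in the Preliminaries as a standard extension property of ANRs to be ``applied freely,'' so there is no in-paper argument to compare against. Your proof is correct and is the classical one: embed $X$ in a normed space $E$, take a retraction $r\colon W\to X$ of a neighborhood, spread the difference $h_Z=g_Z-f|_Z$ over all of $Y$ by Dugundji with image in the convex hull of $h_Z(Z)$ (so $\|h\|<\delta$ everywhere), cut off with a Urysohn function supported in a suitable $V$ between $Z$ and $U$, and retract the straight-line homotopy $F(y,t)=f(y)+t\phi(y)h(y)$ back to $X$. The two delicate points are both handled correctly. First, the uniform oscillation bound for $r$ near the compact set $f(Z)$ does hold even though $N$ need not be compact in the infinite-dimensional $E$: if no $\eta$ worked, one would get $p_n,q_n\in N_{2/n}(f(Z))$ with $d(p_n,q_n)\to 0$ and $d(r(p_n),r(q_n))\geq\epsilon$, and anchoring $p_n$ to nearby points of the compact $f(Z)$ yields a subsequential limit $c\in f(Z)$ with $p_n,q_n\to c$, contradicting continuity of $r$ at $c$. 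Second, you correctly need $f(V)\subset N_{\eta/2}(f(Z))$ (not just $V\subset U$) so that the whole segment $F(\{y\}\times I)$ stays in $N$ for $y\in V\setminus Z$, and that $V$ exists by compactness of $Z$. Note also that your $\delta=\eta/2$ depends only on $f$, $Z$, $\epsilon$ and not on $U$, exactly as the statement's phrasing demands. The only cosmetic slip is writing $\|F(y,t)-f(y)\|\leq\delta$ where $<\delta$ is what you have; it makes no difference.
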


\begin{cor}[Map Extension Theorem] Suppose that $f:Y \to X$ is a continuous map where $Y$ is a metric space and $X$ is an ANR, $Z$ is a compact subset of $Y$ and $\epsilon > 0$. Then there exists $\delta > 0$ such that each $g_Z:Z \to X$ which is $\delta$-close to  $f|_Z$ extends to $g:Y \to X$ so that $\rho(f,g) < \epsilon$. \label{MET}
\end{cor}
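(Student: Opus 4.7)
The plan is to deduce this immediately from the Homotopy Extension Theorem by taking the endpoint of the homotopy produced there as the desired extension $g$, and then to read off the uniform closeness estimate $\rho(f,g) < \epsilon$ from the diameter control on the tracks of the homotopy.

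In more detail, given $f$, $Y$, $X$, $Z$, and $\epsilon$ as in the statement, I would apply Theorem \ref{HET} to the same data to obtain a $\delta > 0$ with the property guaranteed there. I would then claim that this same $\delta$ works for the corollary. Indeed, suppose $g_Z:Z\to X$ is $\delta$-close to $f|_Z$. Choose any open set $U$ with $Z\subset U\subset Y$ (for example $U=Y$). By Theorem \ref{HET} there is a map $g:Y\to X$ and a homotopy $H:Y\times I\to X$ with $H_0=f$, $H_1=g$, $g|_Z=g_Z$, and $\operatorname{diam}(H(\{y\}\times I))<\epsilon$ for every $y\in Y$.

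It remains to verify the uniform estimate $\rho(f,g)<\epsilon$. For each $y\in Y$, both $f(y)=H_0(y)$ and $g(y)=H_1(y)$ lie in the set $H(\{y\}\times I)$, whose diameter is strictly less than $\epsilon$. Hence $d(f(y),g(y))<\epsilon$ for every $y$, which gives $\rho(f,g)<\epsilon$ in the sup metric, and $g$ is the required extension.

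Since this is essentially just a reading of Theorem \ref{HET} that discards the homotopy and retains only its endpoint together with the uniform diameter bound, there is no substantial obstacle: the only point worth highlighting is that the diameter estimate in Theorem \ref{HET} is exactly what converts the existence of an $\epsilon$-homotopy into the sup-metric closeness $\rho(f,g)<\epsilon$ demanded by the corollary.
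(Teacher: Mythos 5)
Your proposal is correct and is exactly how the Map Extension Theorem follows from Theorem \ref{HET}: take $g = H_1$, note that $f(y)=H_0(y)$ and $g(y)=H_1(y)$ both lie in $H(\{y\}\times I)$, and read off $d(f(y),g(y)) < \epsilon$ from the diameter bound on the tracks. The paper leaves the derivation implicit (it is stated as an immediate corollary), and your argument supplies precisely the intended reasoning.
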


A set $Z \subset X$ is said to be $0$-LCC embedded in $X$ if for every  $z \in Z$, each neighborhood $U \subset X$ of $z$ contains a neighborhood $V \subset X$ of $z$ such that any two points in $V-Z$ are connected by a path  in $U-Z$.

A point $x \in X$ is said to be $1$-LCC embedded in $X$ if every neighborhood $U \subset X$ of $x$ contains a neighborhood $V \subset X$ of $x$ such that any map $f: \partial D^2 \to V-\{x\}$ can be extended to a map $f: D^2 \to U-\{x\}$.

We say that a homotopy $f:Z \times [a,b]\to X$ \emph{realizes} $g:Z\times [c,d] \to X$ if $f(x,t) = g(x, \gamma(t))$ for $t \in [a,b]$, where $\gamma:[a,b]\to [c,d]$ is the linear map from the interval $[a,b]$ onto the interval $[c,d]$ such that $\gamma(a)=c$ and $\gamma(b)=d$.

Suppose $f_i: Z \times I \to X$ for $i=1,\ldots, N$, where $f_i(x,1)= f_{i+1}(x,0)$ for $i=1,\ldots, N-1$.  We say that the \emph{adjunction} of $f_1, f_2, \ldots, f_N$, denoted $f=f_1\cdot f_2 \cdot \ldots \cdot f_N$, is the homotopy $f: Z \times I \to X$ so that $f|_{Z \times \left[\frac{i-1}{N},\frac{i}{N} \right]} \text{ realizes }f_i$ for $i=1,\ldots, N$.

\section{General Position Properties}

A space $X$ is said to have the $(k,m)$-DDP provided that any two maps $f: D^k \to X$ and $g: D^m \to X$ can be approximated arbitrarily closely by maps with disjoint images.  The $(k,m)$-DDP is satisfied by $n$-manifolds whenever $n \geq k+m+1$ \cite{rourke-sanderson}.  The $(1,1)$-DDP is more commonly called the \emph{disjoint arcs property (DAP)}.  A resolvable generalized manifold of dimension $n \geq 3$ has the DAP (see Proposition 26.3 of \cite{Daverman book}), but no other general position properties for $k+m \geq 2$ need be satisfied.  Even the $(0,2)$-DDP fails to hold in the famous Daverman-Walsh ghastly spaces, which are resolvable generalized manifolds of dimension $n\geq3$ that contain no embedded $2$-cells.  In these spaces every singular disk necessarily contains an open set \cite{Daverman-Walsh}.

Several techniques have by now been developed for detecting codimension one manifold factors of dimension $n \geq 4$.  In particular, a resolvable generalized manifold $X$ of dimension $n \geq 4$ is known to be a codimension one manifold factor in the case it has one of the following general position properties: the disjoint arc-disk property \cite{Daverman 1}, the disjoint homotopies property \cite{Halverson 1}, or the disjoint topographies (or disjoint concordance) property \cite{Daverman-Halverson 2, HaRe2}. The disjoint arc-disk property, satisfied by manifolds of dimension $n\geq 4$, is the most natural first guess as a general position property to detect codimension one manifold factors.  However, although sufficient, it is not necessary (for examples, see \cite{Cannon-Daverman2, Daverman-Walsh,Halverson 1}). On the other hand, the disjoint topographies (or disjoint concordance) property is a necessary and sufficient condition for resolvable spaces of dimension $n \geq 4$ to be codimension one manifold factors \cite{Daverman-Halverson 2, HaRe2}.  It is still unknown whether or not the disjoint homotopies property likewise provides such a characterization.

There are also several related general position properties that fall into subclasses of these properties. For example, spaces that have the plentiful $2$-manifolds property \cite{Halverson 1}, the $0$-stitched disks properties \cite{Halverson 3}, or for which the method of $\delta$-fractured maps can be applied \cite{Halverson 2}, all have the disjoint homotopies property.  The crinkled ribbons property, the twisted crinkled ribbons property, and the fuzzy ribbons property all imply the disjoint topographies property \cite{HaRe2}.

We now further detail each property with related properties and relevant results.

\subsection{The Disjoint Arc-Disk Property}

Let $I$ denote the unit interval and $D^2$ denote a disk.

\begin{defn}
A space $X$ is said to have the \emph{disjoint arc-disk property (DADP)} provided that any two maps $\alpha: I \to X$ and $f: D^2 \to X$ can be approximated arbitrarily closely by maps with disjoint images.
\end{defn}


\begin{thm}  \label{DADP thm}  \cite{Daverman 1} Every resolvable generalized manifold having DADP is a codimension one manifold factor. \end{thm}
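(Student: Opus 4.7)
The plan is to verify the Disjoint Disks Property (DDP) for $X \times \mathbb{R}$ and then invoke the Edwards CE Approximation Theorem, which says a resolvable generalized $m$-manifold of dimension $m \geq 5$ with the DDP is a topological manifold. Since the only interesting case is $n = \dim X \geq 4$, the product $X \times \mathbb{R}$ has dimension $n+1 \geq 5$, and verifying DDP is enough. Note also that $X \times \mathbb{R}$ is automatically a resolvable generalized $(n+1)$-manifold since cell-like point-inverses of a resolution $M \to X$ remain cell-like after crossing with the identity on $\mathbb{R}$.

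The driving idea is this: the extra $\mathbb{R}$-factor slices any disk $F : D^2 \to X \times \mathbb{R}$ into a $1$-parameter family of arc-like objects in $X$, namely the images $F_X(F_{\mathbb{R}}^{-1}(s))$ of the level sets of $F_{\mathbb{R}}$. Checking that two disks $F, G : D^2 \to X \times \mathbb{R}$ can be pushed apart therefore reduces, at each height $s \in \mathbb{R}$, to separating an arc (from $F$ at level $s$) from a disk in $X$ (coming from a short $\mathbb{R}$-thickening of $G$); this is precisely what the DADP in $X$ provides.

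Carrying this out, given $F, G$ and $\epsilon > 0$, I would first perturb so that $F_{\mathbb{R}}, G_{\mathbb{R}} : D^2 \to \mathbb{R}$ are PL Morse functions with mutually distinct critical values. Choose a partition $\mathbb{R} = \bigcup_k [s_{k-1}, s_k]$ of mesh less than $\epsilon/10$ that refines the set of critical values. On each strip $F_{\mathbb{R}}^{-1}([s_{k-1}, s_k]) \cong L_k \times [s_{k-1}, s_k]$, where $L_k$ is a disjoint union of arcs and circles in $D^2$, the map $F$ has the structure of a $1$-parameter family of arc maps into $X$, and its full image lies in a slab $X \times [s_{k-1}, s_k]$ of height less than $\epsilon/10$; analogous facts hold for $G$. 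For each pair of $F$- and $G$-strips whose $\mathbb{R}$-intervals overlap, I apply DADP in $X$ to the arc $F_X(F_{\mathbb{R}}^{-1}(s_k))$ (a representative level in the strip) and to the small disk $G_X(G_{\mathbb{R}}^{-1}([s_{j-1}, s_j]))$, obtaining a small $X$-perturbation of $G$ on that strip that makes the overlapping slab images disjoint in $X \times \mathbb{R}$; strip pairs whose $\mathbb{R}$-intervals are disjoint require no adjustment.

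The main obstacle is making this strip-by-strip strategy global: the independently chosen DADP perturbations must be glued into continuous $\epsilon$-approximations $F', G'$ of $F, G$ that remain disjoint everywhere. The perturbations on adjacent strips must agree along the common boundary circles mapping to slices $X \times \{s_k\}$, and a later perturbation must not undo the disjointness obtained earlier. The standard remedy is to enumerate the overlapping strip pairs, handle them sequentially with a geometrically shrinking error budget, and invoke the Homotopy Extension Theorem (Theorem \ref{HET}) to extend each partial adjustment across $D^2$ while preserving previously achieved disjointness. This amalgamation, rather than any single conceptual step, is where the bulk of Daverman's original technical work lies.
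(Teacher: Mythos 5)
Your overall strategy is the right one and matches the classical proof: establish the DDP for $X \times \mathbb{R}$ (an $(n+1)$-dimensional resolvable generalized manifold) and invoke the CE approximation theorem, which applies since the only case in which DADP can hold is $n \geq 4$, so $n+1 \geq 5$. The idea of slicing disks in $X \times \mathbb{R}$ by the $\mathbb{R}$-coordinate into thin slabs and applying DADP on each slab is also in the right spirit.

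However, there is a genuine gap in the central step. You control the mesh of the partition of $\mathbb{R}$, which bounds the $\mathbb{R}$-\emph{height} of each strip's image but gives no control at all on its $X$-\emph{diameter}: the preimage $F_{\mathbb{R}}^{-1}([s_{k-1},s_k])$ of a short $\mathbb{R}$-interval can be a large region of $D^2$, and $F_X$ restricted to it can have large image. Consequently, pushing the single representative level arc $F_X(F_{\mathbb{R}}^{-1}(s_k))$ off the $G$-strip disk via DADP gives disjointness \emph{only at the one level} $s_k$; for $s \in (s_{k-1},s_k)$ the set $F_X(F_{\mathbb{R}}^{-1}(s))$ need not be close to the chosen level arc, and can still hit $G$. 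So the sentence ``obtaining a small $X$-perturbation of $G$ on that strip that makes the overlapping slab images disjoint'' does not follow; the later amalgamation then has nothing local to amalgamate. What is missing is an additional control step, for instance: (a) further subdivide each strip in the \emph{domain} $D^2$ into cells of small domain-diameter, so that each sub-cell has small $F_X$-image and the representative level arc genuinely approximates the strip; or (b) reparameterize $F$ by inserting constant adjunction pieces and compressing the $\mathbb{R}$-coordinate on each strip, so that the strip's $X$-image coincides with that of a single level arc over a definite $\mathbb{R}$-subinterval. This reparameterization is exactly the technical engine used elsewhere in this paper (the adjunction construction in the proof of Theorem \ref{FM thm}) and in Daverman's original argument, and its absence is the crux of the gap. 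Minor additional points you would need to address: $F_{\mathbb{R}}^{-1}(s_k)$ is a $1$-complex (arcs and circles), not a single arc, and $G_{\mathbb{R}}^{-1}([s_{j-1},s_j])$ is a surface-with-boundary, not a disk, so both inputs to DADP need to be massaged before it applies; these are routine but should be said.
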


\subsection{The Disjoint Homotopies Property and Related Properties}

Let both $D$ and $I$ denote the unit interval $[0,1]$.

\begin{defn}
A space $X$ has the \emph{disjoint homotopies property (DHP)} if any two path homotopies $f,g:D \times I \to X$ can be approximated arbitrarily closely by homotopies $f',g':D \times I \to X$ so that $f'_{t}(D) \cap g'_{t}(D) = \emptyset$ for all $t \in I$.
\end{defn}

\begin{thm}
\cite{Halverson 1} Every resolvable generalized $n$-manifold having the DHP  is a codimension one manifold factor.
\end{thm}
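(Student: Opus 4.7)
The plan is to verify, under the assumption $n\geq 4$, that $X\times\mathbb{R}$ has the disjoint disks property (DDP); since $X\times\mathbb{R}$ is automatically a resolvable generalized $(n+1)$-manifold of dimension at least $5$, Edwards' cell-like approximation theorem then promotes it to a genuine manifold. So the task reduces to: given $F,G\colon D^2\to X\times\mathbb{R}$ and $\epsilon>0$, produce $\epsilon$-approximations $F',G'$ with $F'(D^2)\cap G'(D^2)=\emptyset$.

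First I would exploit general position in the $\mathbb{R}$-factor. Writing $F_X=\pi_X\circ F$, $F_\mathbb{R}=\pi_\mathbb{R}\circ F$ (and similarly for $G$), I would perturb $F_\mathbb{R},G_\mathbb{R}$ using the disjoint arcs property of $X$ (which holds in any resolvable generalized manifold of dimension $\geq 3$) together with standard PL/Morse adjustments in $\mathbb{R}$, so that for all but finitely many critical values $t\in\mathbb{R}$, the level sets $F_\mathbb{R}^{-1}(t),G_\mathbb{R}^{-1}(t)\subset D^2$ are finite disjoint unions of properly embedded arcs. Choose $\eta\ll\epsilon$ and partition $\mathbb{R}$ into intervals $J_k=[k\eta,(k+1)\eta]$ whose endpoints miss the critical values. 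Each strip $S_k^F:=F_\mathbb{R}^{-1}(J_k)$ decomposes into finitely many planar regions each of which, after reparameterization, carries $F_X$ as a path homotopy $H_k^F\colon I\times I\to X$ of the arcs at the two end levels; likewise one obtains $H_k^G$ from $G$.

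Next I would apply the DHP component-by-component and strip-by-strip: for each $k$ I approximate the (finitely many) path homotopies arising from $F$ and from $G$ at this level by homotopies $\widetilde H_k^F,\widetilde H_k^G$ with $\widetilde H_k^F(\cdot,t)(I)\cap \widetilde H_k^G(\cdot,t)(I)=\emptyset$ for every $t\in I$. Translating back, at every real number $s$ lying in $J_k$, the modified $X$-projection images of $F_\mathbb{R}^{-1}(s)$ and $G_\mathbb{R}^{-1}(s)$ are disjoint in $X$. The Homotopy Extension Theorem then produces a genuine $\epsilon$-perturbation $F'$ (respectively $G'$) of $F$ (respectively $G$) realizing these corrections on each $S_k^F$ while leaving $F_\mathbb{R}$ essentially unchanged. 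Disjointness is immediate: if $F'(x)=G'(y)$, then $F_\mathbb{R}(x)=G_\mathbb{R}(y)=:s$ lies in some $J_k$, but on that strip the DHP step forced the level-$s$ images to be disjoint in $X$, a contradiction.

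The main obstacle will be the bookkeeping that glues the strip-wise DHP adjustments into a global $\epsilon$-perturbation. Specifically one must (i) parameterize the level arcs of $F_\mathbb{R}^{-1}(k\eta)$ uniformly so they genuinely appear as endpoint arcs of the path homotopies on adjacent strips $S_k^F$ and $S_{k+1}^F$, (ii) arrange each DHP approximation so that the common boundary data agree across levels $t=k\eta$, allowing the Homotopy Extension Theorem to stitch the modifications into a single map on $D^2$, and (iii) distribute an $\epsilon$-budget across the finitely many strips so the total perturbation stays under $\epsilon$. None of these is conceptually deep, but together they form the technical core of the argument; the role of the DHP itself is to supply, at each strip, exactly the disjointness that the level-set picture requires.
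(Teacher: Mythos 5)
Your overall plan --- reduce to the DDP of $X\times\mathbb{R}$ via Edwards' cell-like approximation theorem, slice by the $\mathbb{R}$-coordinate, and apply the DHP strip by strip --- is the strategy used in the cited reference. But there is a concrete gap at the level-set step. You assert that after PL/Morse adjustments the level sets $F_{\mathbb{R}}^{-1}(t)$ are finite disjoint unions of properly embedded arcs, so that each strip $S_k^F$ decomposes into rectangular pieces carrying path homotopies $I\times I\to X$. This fails whenever $F_{\mathbb{R}}$ has an interior local extremum (for example, any small perturbation of $(x,y)\mapsto x^2+y^2$ on $D^2$): regular level sets near such a critical value contain circles, and the corresponding strips contain annular components, not rectangles. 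The DHP, as defined in the paper, applies only to homotopies with domain $D\times I$ where $D=I$, so it says nothing about these annular pieces; and interior extrema cannot in general be removed by a small perturbation of $F_{\mathbb{R}}$. Any complete proof must either arrange the $\mathbb{R}$-coordinate to be compatible with a product structure on a fine subdivision of $D^2$ so that only arc components occur, or treat annular components by a separate argument; your proposal does neither.

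A secondary point: you flag the stitching issues yourself --- compatible parameterizations of adjacent strips, a relative form of the DHP fixed on the shared boundary arcs, an $\epsilon$-budget across the strips, and the need to handle finitely many components per strip without one DHP application undoing a prior one --- but characterize them as not conceptually deep. In fact these steps, together with the circle issue above, are where most of the work in the cited proof lives; resolving them requires the homotopy extension theorem combined with careful size control on each successive approximation, and also a $1$-complex (rather than single-interval) version of the DHP to process several components per level in one pass. As written, the proposal identifies the correct skeleton but leaves the load-bearing steps unproved.
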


\begin{defn}
A space $X$ has the \emph{plentiful $2$-manifolds property (P2MP)} if each path $\alpha: I \to X$ can be approximated arbitrarily closely by a path $\alpha': I \to N \subset X$ where $N$ is a $2$-manifold
embedded in $X$.
\end{defn}

\begin{thm} \cite{Halverson 1}
Every resolvable generalized $n$-manifold, $n \geq 4$, having the P2MP, satisfies the DHP, and hence is a codimension one manifold factor.
\end{thm}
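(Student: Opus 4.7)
The second conclusion (that the space is a codimension one manifold factor) is immediate from the preceding theorem, so the task is to show $\mathrm{P2MP} \Rightarrow \mathrm{DHP}$. Given path homotopies $f, g : D \times I \to X$ and $\epsilon > 0$, my plan is to construct $\epsilon$-close approximations $f', g' : D \times I \to X$ with $f'_t(D) \cap g'_t(D) = \emptyset$ for every $t \in I$. The central idea is to use P2MP to force each individual $t$-slice into an embedded $2$-manifold in $X$, where planar general position separates arcs, and then to propagate slice-wise disjointness from a finite set of time-breakpoints out to all of $I$ via a ribbon construction together with the Homotopy Extension Theorem.

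To make this precise, I would first use uniform continuity of $f$ and $g$ to select a partition $0 = t_0 < t_1 < \cdots < t_k = 1$ of $I$ so fine that each of $f, g$ varies by less than $\epsilon/8$ across any subinterval $[t_{i-1}, t_i]$. At each breakpoint $t_i$, apply P2MP to obtain $\alpha_i \approx f_{t_i}$ inside an embedded $2$-manifold $N^f_i \subset X$ and $\beta_i \approx g_{t_i}$ inside an embedded $2$-manifold $N^g_i \subset X$, each to within $\epsilon/8$. By a standard planar approximation inside the $2$-manifolds, I would further perturb $\alpha_i, \beta_i$ to embedded arcs in $N^f_i, N^g_i$. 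Because $\beta_i(D)$ is then an embedded arc in $X$, the intersection $\beta_i(D) \cap N^f_i$ has empty interior in the $2$-manifold $N^f_i$ (a $2$-disk cannot sit inside an arc), hence is nowhere dense in $N^f_i$; one more planar perturbation within $N^f_i$ pushes $\alpha_i$ off this set, achieving $\alpha_i(D) \cap \beta_i(D) = \emptyset$ at each breakpoint.

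The main obstacle is promoting this breakpoint-level disjointness to slice-wise disjointness for every $t \in I$. I would handle this with a ribbon argument: thicken each $\alpha_i$ to a thin closed ribbon $R^f_i$ (a regular neighborhood of $\alpha_i$ inside the $2$-manifold $N^f_i$) and each $\beta_i$ to a ribbon $R^g_i \subset N^g_i$, with the thickenings chosen fine enough that $R^f_i \cap R^g_i = \emptyset$. Over each subinterval $[t_{i-1}, t_i]$, I would construct $f'$ so that every slice $f'_t(D)$ lies in $R^f_{i-1} \cup R^f_i$ and $g'$ so that every slice $g'_t(D)$ lies in $R^g_{i-1} \cup R^g_i$, extending the breakpoint data via the Homotopy Extension Theorem (Theorem \ref{HET}) and keeping everything $\epsilon$-close to the originals. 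Disjointness of the ribbons then immediately gives $f'_t(D) \cap g'_t(D) = \emptyset$ for all $t$, and concatenating across subintervals produces the required $f'$ and $g'$. The technical crux, as anticipated, is guaranteeing that the interpolating homotopies actually remain inside their prescribed ribbons; this is where an additional planar general position step inside each $N^f_i, N^g_i$, controlling the sweep of the paths between consecutive breakpoints, is needed.
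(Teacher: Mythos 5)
The proposal has a genuine gap at exactly the point you flag as ``the technical crux,'' and I do not think it can be filled along the lines you describe. You propose to build $f'$ on $D \times [t_{i-1},t_i]$ so that \emph{every} slice $f'_t(D)$ lies in $R^f_{i-1}\cup R^f_i$, where $R^f_{i-1}\subset N^f_{i-1}$ and $R^f_i\subset N^f_i$ are thin ribbons in two \emph{separate} embedded $2$-manifolds obtained by applying P2MP independently at $t_{i-1}$ and $t_i$. There is no reason $N^f_{i-1}$ and $N^f_i$ meet at all, let alone in a way that lets a path homotopy from $\alpha_{i-1}$ to $\alpha_i$ stay inside the $1$-complex-thin set $R^f_{i-1}\cup R^f_i$; a generic homotopy between two nearby arcs in $X$ sweeps out a ``fat'' $2$-dimensional region of $X$ and cannot be squeezed into a union of two ribbons in two unrelated $2$-manifolds. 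The Homotopy Extension Theorem gives closeness to $f$, not containment in a prescribed thin target, so it cannot rescue this step. There is also a secondary issue: DHP allows \emph{both} $f$ and $g$ to be arbitrary path homotopies, and you never carry out the standard reduction (alternating which one is held constant across the subintervals of the partition) that makes a constant-homotopy argument applicable.

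The paper's own route is structurally different and sidesteps both problems. It proves P2MP $\Rightarrow$ P-DADP (Theorem~\ref{P2MP}), then P-DADP $\Leftrightarrow$ $\delta$-FMP (Theorem~\ref{FM}), then $\delta$-FMP $\Rightarrow$ DHP (Theorem~\ref{FM thm}). In the P2MP $\Rightarrow$ P-DADP step, only \emph{one} arc $\alpha$ (the slice of the homotopy held constant) is put into a \emph{single} $2$-manifold $N$ and thickened to a ribbon; the other map $f:D^2\to X$ is pushed so that a dense $1$-skeleton of a fine cell complex $T$ of $D^2$ misses $N$ (using that $N$, being $2$-dimensional with $n\ge 4$, is $0$-LCC). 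Then for each small $2$-cell $\sigma$ of $T$, the arc is slid within its ribbon to dodge just the small set $f'(\sigma)$. Nothing is ever required to interpolate \emph{inside} a thin target; the ribbon only carries the sliding arc, and the reduction from two moving homotopies to one constant one is handled in the $\delta$-FMP $\Rightarrow$ DHP step by subdividing time and alternating which map is treated as constant. You would need to reorganize your argument around a single $2$-manifold for one arc, plus a subdivision of $D^2$ rather than of $I$, to close the gap.
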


\begin{defn} \label{frac def}
A map $f:D \times I \to X$ is said to be \emph{$\delta$-fractured} over a map $g:D \times I \to X$ if there are pairwise disjoint balls $B_1, B_2, \ldots, B_m$ in $D \times I$ such that for each $i \in \{1, \ldots, m\}$:
    \begin{enumerate}
        \item $diam(B_i) < \delta$;
        \item $f^{-1}(im(g)) \subset \bigcup_{i=1}^m int(B_i)$; and
        \item $diam(g^{-1}(f(B_i))) < \delta$.
    \end{enumerate}
\end{defn}

\begin{thm} \cite{Halverson 2}
If $X$ is a resolvable generalized $n$-manifold, $n \geq 4$, with the property that for an arbitrary homotopy $f:D \times I \to X$, constant homotopy $g: D\times I \to X$, and $\delta>0$, there are approximations $f'$ of $f$ and $g'$ of $g$ such that $f'$ is $\delta$-fractured over $g'$, then $X$ has the DHP, and hence $X$ is a codimension one manifold factor.
\end{thm}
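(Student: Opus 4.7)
The plan is to verify the disjoint homotopies property (DHP); combined with the previously stated implication DHP $\Rightarrow$ codimension one manifold factor, this finishes the proof. Given arbitrary path homotopies $f, g : D \times I \to X$ and $\epsilon > 0$, I must produce $\epsilon$-close approximations $f', g'$ with $f'_t(D) \cap g'_t(D) = \emptyset$ for every $t \in I$.

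The hypothesis supplies fracturing only when the second homotopy is \emph{constant}, so the main task is a reduction to that case. Using uniform continuity of $g$, partition $I$ into subintervals $0 = t_0 < t_1 < \cdots < t_N = 1$ so finely that $diam\, g(D \times [t_{j-1}, t_j]) < \eta$ for a small $\eta$ to be determined. On each strip $D \times [t_{j-1}, t_j]$ the homotopy $g$ is nearly stationary, so I replace it by a constant homotopy $\tilde{g}_j$ whose image agrees with the path $g(\cdot, s_j)$ for a chosen $s_j \in [t_{j-1}, t_j]$.

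Next I apply the fracturing hypothesis to the pair $(f|_{D \times [t_{j-1}, t_j]}, \tilde{g}_j)$ with parameter $\delta_j$ chosen much smaller than both $\eta$ and $t_j - t_{j-1}$. This yields a $\delta_j$-fractured approximation $f^{(j)}$ of $f|_{D \times [t_{j-1}, t_j]}$ over an approximation $g^{(j)}$ of $\tilde{g}_j$, together with pairwise disjoint controlling balls $B_1^j, \ldots, B_{m_j}^j$. Condition (1) of Definition \ref{frac def} forces each ball to project to an interval of length $< \delta_j$ in $I$, condition (2) confines the entire intersection set $(f^{(j)})^{-1}(im(g^{(j)}))$ to those balls, and condition (3) tightly controls the $g^{(j)}$-preimages $(g^{(j)})^{-1}(f^{(j)}(B_i^j))$. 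A small time-reparametrization inside each $B_i^j$ then shifts the $t$-interval on which $f^{(j)}$ can meet $g^{(j)}$ off the $t$-interval on which $g^{(j)}$ realizes those values; because $\delta_j$ is arbitrarily small, the reparametrization fits within the $\epsilon$-budget.

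Finally, I assemble the strip-wise maps into global $\epsilon$-approximations $f', g'$ via the Homotopy Extension Theorem (Theorem \ref{HET}), keeping them rigid near the shared boundary slices $D \times \{t_j\}$. The main obstacle is the consistency of this local-to-global assembly: the reparametrizations from the previous step must not disturb the homotopies at the interval endpoints, and the $\eta$-slippage introduced when replacing $g$ by constant homotopies on each strip must be dominated by the disjointness margin that fracturing produces. Arranging every controlling ball $B_i^j$ to lie in the interior of its strip, and choosing $\delta_j \ll \eta \ll \min_j (t_j - t_{j-1})$, resolves both difficulties and yields $f'_t(D) \cap g'_t(D) = \emptyset$ for every $t \in I$, establishing the DHP and hence the codimension one manifold factor conclusion.
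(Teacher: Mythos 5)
Your proposal correctly identifies the central difficulty — the hypothesis only fractures $f$ over a \emph{constant} $g$, so some reduction to that case is needed — but the reduction you attempt does not work, and the key idea in the paper's proof is missing.

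The problem is your "replace $g$ by a constant homotopy $\tilde{g}_j$ on each strip" step. The constants $\tilde{g}_j = g(\cdot, s_j)$ differ from strip to strip, so the piecewise-constant $\tilde{g}$ you envision is not a continuous homotopy: at $t = t_j$, the left-hand value $g(\cdot, s_j)$ and the right-hand value $g(\cdot, s_{j+1})$ disagree. If you try to repair this by interpolating between consecutive constants (or by invoking HET to glue the stripwise approximations $g^{(j)}$, which inherit the same mismatch), you create short subintervals on which $g'$ is genuinely moving — and throughout those subintervals $f'$ is \emph{also} moving, since your construction leaves $f$ moving on every strip. The fracturing hypothesis gives no control at all over a pair in which both homotopies move, and there is no reason the interpolated $g'$ should stay disjoint from $f'$ there. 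Choosing $\delta_j \ll \eta \ll t_j - t_{j-1}$ does not fix this; the mismatch between $\tilde{g}_j$ and $\tilde{g}_{j+1}$ is of size $\approx \eta$ regardless of $\delta_j$, and "keeping $f', g'$ rigid near $D \times \{t_j\}$" would force them to agree with the original $f, g$ there, where no disjointness holds.

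The missing idea is the \emph{interleaving} of moving and constant stages on \emph{both} sides. Subdivide $I$ so that both $f$ and $g$ are nearly stationary on each small strip, and form adjunctions
$\tilde f = f_1 \cdot \lambda_1 \cdot f_2 \cdot \lambda_2 \cdots$ and
$\tilde g = \gamma_1 \cdot g_1 \cdot \gamma_2 \cdot g_2 \cdots$,
where $f_i, g_i$ are the restrictions of $f, g$ to the $i$-th strip, $\lambda_i$ is the constant homotopy at the level slice $f_{i/2^N}$, and $\gamma_i$ is the constant homotopy at the level slice $g_{(i-1)/2^N}$. Now at any given time exactly one of the pair $(\tilde f, \tilde g)$ is moving and the other is constant, so the fracturing hypothesis can be applied to each pair $(f_i, \gamma_i)$ and $(g_i, \lambda_i)$. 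After compressing the fracturing balls so their $I$-projections are pairwise disjoint, one reparametrizes the constant member of each pair to dodge the fractured images level by level, uses HET to restore the original endpoint slices, and then reassembles by adjunction. The gluing is now automatic because the stage endpoints are genuine level slices of the original $f$ and $g$. Without this two-sided interleaving, the stripwise constructions cannot be made to agree at the boundaries while preserving level-wise disjointness, and the argument does not close.
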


In fact, upon closer inspection only a weaker modified version of the $\delta$-fractured maps property is required to detect codimension one manifold factors, which we will now define and prove here:

\begin{defn}[{\bf Modified Definition \ref{frac def}}]
A map $f:D \times I \to X$ is said to be \emph{$\delta$-fractured} over a map $g:D \times I \to X$ if there are pairwise disjoint balls $B_1, B_2, \ldots, B_m$ in $D \times I$ such that:
    \begin{enumerate}
        \item $diam(B_i) < \delta$;
        \item $f^{-1}(im(g)) \subset \bigcup_{i=1}^m int(B_i)$; and
        \item $p\circ g^{-1}\circ f(B_i) \ne I$;
    \end{enumerate}
where $p:D \times I \to I$ is the natural projection map.
\end{defn}

\begin{defn}
A space $X$ is said to have the \emph{$\delta$-fractured maps property ($\delta$-FMP)} provided that for any path homotopy $f:D \times I \to X$, constant homotopy $g:D \times I \to X$, and $\delta>0$, there are approximations $f'$ of $f$ and $g'$ of $g$ so that $f$ is $\delta$-fractured (modified version) over $g$.
\end{defn}

\begin{thm} \label{FM thm}
Every ANR having the $\delta$-FMP  satisfies the DHP.
\end{thm}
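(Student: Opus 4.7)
The plan is to bootstrap the single-slab hypothesis of $\delta$-FMP (in which $g$ must be constant in $t$) into disjointness of all time slices at once. Given path homotopies $f,g:D\times I\to X$ and $\epsilon>0$, I first partition $I$ into short intervals $J_1,\ldots,J_N$ so fine that $g|_{D\times J_k}$ is uniformly within $\epsilon/3$ of the constant-in-$t$ homotopy $\bar g_k(d,t):=g(d,t_k)$ for some chosen $t_k\in J_k$. On each slab (linearly identified with $D\times I$) I invoke $\delta$-FMP on the pair $(f|_{D\times J_k},\bar g_k)$ with a small $\delta_k$, producing approximations $f^{(k)}$ of $f|_{D\times J_k}$ and $g^{(k)}$ of $\bar g_k$ together with pairwise disjoint balls $B_1^k,\ldots,B_{m_k}^k\subset D\times J_k$ such that $(f^{(k)})^{-1}(\mathrm{im}\,g^{(k)})\subset\bigcup_i\mathrm{int}(B_i^k)$, and each $B_i^k$ carries a \emph{safe time} $s_i^k\in J_k$ with $g^{(k)}(\,\cdot\,,s_i^k)(D)\cap f^{(k)}(B_i^k)=\emptyset$.

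Next, using the Homotopy Extension Theorem (available because $X$ is an ANR), I stitch the local pieces into global maps $f'$ and $g'$ that are $\epsilon/3$-close to $f$ and $g$ and retain the ball structure after arbitrarily small additional perturbation near slab boundaries. I then construct a continuous reparametrization $\phi:I\to I$ preserving each $J_k$, with $\phi(t)=s_i^k$ on $p(B_i^k)$, and set $\tilde g_t:=g'_{\phi(t)}$ and $\tilde f:=f'$. Fixing any $t\in I$, the disjointness of $\tilde f_t(D)$ and $\tilde g_t(D)$ decomposes cleanly: points of $\tilde f_t(D)$ outside $\mathrm{im}(g')$ miss $\tilde g_t(D)\subset\mathrm{im}(g')$ trivially, while any point $f'(d,t)$ with $(d,t)\in B_i^k$ (forcing $t\in p(B_i^k)$) avoids $\tilde g_t(D)=g'_{s_i^k}(D)$ by the safe-time condition on $B_i^k$. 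The closeness of $\tilde g$ to $g$ follows because $g^{(k)}$ is nearly constant on $J_k$, so reparametrization within $J_k$ moves values by at most $\epsilon/3$.

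The main obstacle is constructing $\phi$ when two balls $B_i^k$ and $B_j^k$ have overlapping time-projections $p(B_i^k)\cap p(B_j^k)\ne\emptyset$, since the assignment then demands $\phi(t)=s_i^k$ and $\phi(t)=s_j^k$ simultaneously. To circumvent this I would refine the subdivision inductively (or reapply $\delta$-FMP on sub-slabs with a smaller $\delta$) so that at each refinement stage each ball projection lies in its own sub-interval on which a single safe time can be selected. The flexibility for such refinement is provided by $g^{(k)}$ being uniformly close to the constant $\bar g_k$: any continuous choice of $\phi(t)\in J_k$ produces $g'_{\phi(t)}$ close to $g'_t$, so the eventual $\tilde g$ remains a legitimate $\epsilon$-approximation of $g$.
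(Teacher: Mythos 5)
Your overall strategy is essentially the one the paper uses: make one of the two homotopies a constant homotopy on short time slabs, apply the modified $\delta$-FMP there to get fracture balls and safe times, and then reparametrize the nearly-constant map to thread between those balls. However, you have identified the genuine obstruction yourself (overlapping time projections $p(B_i^k)\cap p(B_j^k)\ne\emptyset$), and the fix you propose does not actually resolve it.

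Refining the subdivision, or reapplying $\delta$-FMP with a smaller $\delta$, only shrinks the \emph{diameters} of the fracture balls; nothing in the definition of $\delta$-fractured maps, even in the modified form, forces their images under the projection $p:D\times I\to I$ to be pairwise disjoint. Two tiny balls can still sit directly over one another, and then no single-valued $\phi$ can be $s_i^k$ on one and $s_j^k$ on the other. The paper's proof resolves this by \emph{modifying the moving map $f'$ as well}: it precomposes $f_i'$ with a self-homeomorphism $\psi_i$ of $D\times I$ that compresses each ball $B_j'$ into a tiny neighborhood of a distinguished point $(x_j,t_j)$ with $t_j\ne t_k$ for $j\ne k$, after which $p(B_j)\cap p(B_k)=\emptyset$. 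Your construction fixes $\tilde f := f'$ outright, so you have denied yourself the degree of freedom on which the argument actually turns. Without it the reparametrization $\phi$ simply cannot be defined in general.

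There is a second, subsidiary gap at the slab boundaries. Since $\phi$ must preserve each $J_k$, it fixes the shared endpoints $t_k$, so you need $f'(D\times\{t_k\})\cap g'(D\times\{t_k\})=\emptyset$ a priori, and you also need to ensure no fracture ball projects onto an endpoint $t_k$ (otherwise $\phi$ is overdetermined there). The paper handles this by first invoking the DAP (itself a consequence of the $\delta$-FMP) to arrange that $f$ and $g$ already miss each other on a dense set of levels, building those levels into the subdivision, choosing a separation constant $\xi$ at those levels, and stipulating that the first and last subintervals of the partition $\mathcal{J}$ contain no ball projections. Your write-up leaves this boundary-matching unaddressed. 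Finally, note that the paper uses an interleaving device, writing $\tilde f = f_1\cdot\gamma_1\cdot\ldots$ and $\tilde g=\lambda_1\cdot g_1\cdot\ldots$ so that on each segment exactly one of the pair is genuinely constant; your idea of approximating $g|_{J_k}$ by $\bar g_k$ plays the same role and is not wrong, but it is another place where you trade an exact structure for an approximate one and must therefore track error terms more carefully.
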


\begin{proof}
Let $X$ be an ANR with the $\delta$-FMP.  Note that $X$ has the DAP (this can be simply verified by applying the $\delta$-FMP to two constant path homotopies).

Let $f,g:D \times I \to X$ where $g$ is a constant homotopy. Applying the DAP, we may assume, without loss of generality, that $f(D \times \mathbb{Q}^*) \cap g(D \times \mathbb{Q}^*) = \emptyset$, where $\mathbb{Q}^* = \mathbb{Q} \cap I$.  Let $\epsilon >0$.  Choose an $N$ sufficiently large so that $diam \left(f \left(\{x\} \times \left[\frac{i-1}{2^N}, \frac{i}{2^N}\right]\right) \right) < \frac{\epsilon}4$ and $diam\left(g \left(\{x\} \times \left[\frac{i-1}{2^N}, \frac{i}{2^N}\right]\right) \right) < \frac{\epsilon}4$ for all $x \in D$  and $i=1,\ldots, 2^N$. Define
\begin{align*}
&f_i=f|_{D\times\left[\frac{i-1}{2^N}, \frac{i}{2^N}\right]}, \\
&\lambda_i:D \times I \to X \text{ is the constant homotopy }\lambda_i(x,t) = f_{\frac{i}{2^N}}(x),  \\&g_i=g|_{D\times\left[\frac{i-1}{2^N}, \frac{i}{2^N}\right]}, \text{ and } \\
&\gamma_i:D \times I \to X \text{ is the constant homotopy } \gamma_i(x,t) = g_{\frac{i-1}{2^N}}(x).
\end{align*}
Note that the adjunction maps $\tilde{f} = f_1\cdot \gamma_1 \cdot \ldots \cdot f_{2^N} \cdot \gamma_{2^N}$ and $\tilde{g} = \lambda_1\cdot g_1 \cdot \ldots \cdot \lambda_{2^N} \cdot g_{2^N}$ are $\epsilon/4$-approximations of $f$ and $g$, respectively.

  Choose $\xi$ so that $0<\xi< \epsilon/4$ and $\xi < \frac 12 dist\left(f_{\frac{i}{2^N}}(D), g_{\frac{i}{2^N}}(D)\right)$ for all $i=0, \ldots, 2^N$.   Then $\xi < \frac 12 dist\left((f_i)_e(D), (\gamma_i)_e(D)\right)$ and $\xi < \frac 12 dist\left((\lambda_i)_e(D), (g_i)_e(D)\right)$  for $e=0,1$ and all $i=1, \ldots, 2^N$.  Choose $\zeta>0$ so that $\zeta$-approximations of $f_{\frac{i}{2^N}}$ or $g_{\frac{i}{2^N}}$ are $\xi$-homotopic to $f_{\frac{i}{2^N}}$ or $g_{\frac{i}{2^N}}$, respectively.  Note that necessarily $\xi < \epsilon/4$.

Let $f'_i$ and $\gamma'_i$ be $\zeta$-approximations of $f_i$ and $\gamma_i$, respectively, so that $f'_i$ is $\delta$-fractured (modified version) over $\gamma'_i$, where $\delta>0$ is sufficiently small so that if $B'_1, B'_2, \ldots, B_m' \subset D \times I$ are the balls in the domain of $f_i'$, promised by the $\delta$-fractured maps condition, then $diam(f_i(B'_j))< \epsilon/2$.  Let $p: D \times I \to I$ be the natural projection map.  Let $\psi_i: D \times I \to D \times I$ be a homeomorphism taking each ball $B'_j$ to a ball $B_j$  to such that $p(B_j) \cap p(B_k) = \emptyset$ if $j \ne k$ and $f_i'' = f_i' \circ \psi_i$ is an $\epsilon/2$-approximation of $f_i'$.  The map $\psi_i$ can be defined by its inverse $\psi_i^{-1}$.  The map $\psi_i^{-1}$ is obtained by selecting a point $(x_j,t_j) \in int(B_j')$ for each $j=1, \ldots, m$ so that $t_j \ne t_k$ if $j \ne k$ and a small ball neighborhood $U_j$ of $B_j'$ so that the $diam(f_i'(U_j))<\epsilon/2$ and $U_j \cap U_k =\emptyset$ if $j \ne k$.  The ball $B_j'$ is compressed within $U_j$ to a very tiny ball neighborhood $B_j$ of $(x_j,t_j)$, so that $p(B_j) \cap p(B_k) = \emptyset$ if $j \ne k$.  Note that $f''_i$  is $\delta$-fractured over $\gamma'_i$ with respect to the balls $B_1, B_2, \ldots, B_m$.

Now we demonstrate for a fixed $i$ how to  reparameterize $\lambda'_i$ to obtain approximation $\lambda''_i$ such that $f''_i$ and $\lambda''_i$ are disjoint homotopies, as follows:  Given $B_1, B_2, \ldots, B_m$ to be the balls in the domain of $f''_i$ obtained above, such that $\psi_i(B_j)=B_j'$ and $p(B_j) \cap p(B_k) =\emptyset$ if $j \ne k$,  partition $I$ into a collection of subintervals with nonempty interiors, $\mathcal{J} = \{J_1, J_2, \ldots, J_r\}$, so that
\begin{enumerate}
\item $J_1 \leq J_2 \leq \ldots \leq J_r$,
\item $J_1$ and $J_r$ do not contain any of the projection sets $p(B_j)$,
\item for $j = 1, \ldots, m$, $p(B_j) \subset int(J)$ for some $J \in \mathcal{J}$,
\item if $p(B_j) \subset J \in \mathcal{J}$, then $p(B_k) \nsubseteq J$ when $j\ne k$, and
\item if $j \ne k$, there is at least one interval $J$  between the intervals containing $p(B_j)$ and $p(B_k)$ such that $J$ contains no projection set $p(B_l)$.
\end{enumerate}
For each $J_k \in \mathcal{J}$, we define a parameter value $\tau_k$ as follows:  Let $\tau_0=0$ and $\tau_{r}=1$.  For $0<k<r$, if $J_k$ or $J_{k+1}$ contains $p(B_j)$ then let $\tau_k$ be a value $t_j$ where $f''_i(B_j) \cap \gamma'_i(D \times \{t_j\}) = \emptyset$.  Such a value is guaranteed because $f''$ is $\delta$-fractured over $\gamma'$.  Otherwise let $\tau_k=0$.

We now define the homotopy $\gamma''_i$ such that $\gamma''_i|_{D\times J_k}$ realizes $\gamma_i'|_{D \times [\tau_{k-1},\tau_k]}$.  The resulting maps $f_i''$ and $\gamma_i''$ are disjoint homotopies.  Note that $diam \left(\gamma'_i \left(\{x\} \times I \right) \right) < \epsilon/2$ since $\gamma'_i$ is an $\epsilon/4$-approximation of a constant homotopy.  Thus $\gamma''$ is an $\epsilon/2$-approximation of $\gamma'$.   For $e=0,1$, note that $(f'_i)_e = (f''_i)_e$ and $(\gamma'_i)_e = (\gamma''_i)_e$.  By choice of $\zeta$, $f_i$, $\gamma_i$, and our approximations of $f_i$ and $\gamma_i$,  $(f''_i)_e$ is $\xi$-homotopic to $(f_i)_e$ and $(\gamma''_i)_e$ is $\xi$-homotopic to $(\gamma_i)_e$.  Note that these $\xi$-homotopies necessarily have disjoint images by virtue of our choice of $\xi$.  By adjoining these $\xi$-homotopies to the ends of $f''_i$ and $\gamma''_i$ and reparameterizing  appropriately, adjusting the parameter values only very near the ends of the original homotopies $f''_i$ and $\gamma''_i$ to cover the adjoined homotopies, we may assume without loss of generality that $(f''_i)_e = (f_i)_e$ and $(\gamma''_i)_e = (\gamma_i)_e$, still maintaining that $f''_i$ and $\gamma''_i$ are $\epsilon/2$-approximations of $f'_i$ and $\gamma'_i$, respectively, and insuring that $f''_i$ and $\gamma''_i$ are disjoint homotopies.

Observe that final adjusted maps $f_i''$ and $\gamma_i''$ are disjoint homotopies that are $3\epsilon/4$-approximations of $f_i$ and $\gamma_i$, respectively.     We likewise obtain $g''_i$ and $\lambda''_i$ that are disjoint homotopies and $3\epsilon/4$-approximation of $g_i$ and  $\lambda_i$, respectively.  Now we form the adjunction $f'= f''_1\cdot\lambda''_1 \cdot f''_2 \cdot \lambda''_2 \cdot \ldots f''_{2^N} \cdot \lambda''_{2^N}$ and $g''= \gamma''_1\cdot g''_1 \cdot \gamma ''_2 \cdot g''_2 \cdot \ldots \gamma''_{2^N} \cdot g''_{2^N}$, which are $3\epsilon/4$-approximations of $\tilde{f}$ and $\tilde{g}$, respectively.  Therefore $f'$ and $g'$ are the desired $\epsilon$-approximations of $f$ and $g$, respectively, that are disjoint homotopies.
\end{proof}

\begin{rmk} Note that in the proof above, there is  no need for the $\delta$-control omitted by the modified version of the definition of $\delta$-fractured maps in the reparamaterization  of $\gamma_i'$ to obtain $\gamma''_i$, as size controls are maintained by virtue of the homotopies being thin.
\end{rmk}

\begin{cor}[{\bf Modified Version of Theorem 3.8}] \label{Mod FM thm}
Every resolvable generalized manifold having the $\delta$-FMP satisfies the DHP, and therefore is a codimension one manifold factor.
\end{cor}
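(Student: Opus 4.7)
The plan is to assemble this corollary directly from results already in place. The statement has two conclusions: first, that the DHP holds, and second, that $X$ is a codimension one manifold factor; I would prove them in that order.

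For the first conclusion, I would observe that a finite-dimensional resolvable generalized manifold $X$ is an ANR, a fact recorded in the Preliminaries with citation to \cite{MiRe}. Since $X$ is by hypothesis an ANR with the $\delta$-FMP, Theorem \ref{FM thm} applies verbatim and yields that $X$ has the DHP. No additional argument is needed here: the hypothesis of Theorem \ref{FM thm} is strictly weaker than that of the corollary, so it is simply a matter of citing the ANR property and invoking the theorem.

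For the second conclusion, I would invoke the earlier-quoted theorem of Halverson \cite{Halverson 1}, stated in Section 3.2, that every resolvable generalized $n$-manifold having the DHP is a codimension one manifold factor. Combining this with the DHP just established gives that $X \times \mathbb{R}$ is a manifold, completing the proof.

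There is essentially no obstacle; the content of the corollary is entirely packaged in Theorem \ref{FM thm} together with the ANR property of resolvable generalized manifolds and the DHP-to-codimension-one-manifold-factor implication from \cite{Halverson 1}. The only mild caveat worth noting in the write-up is that the $\delta$-FMP in the corollary refers to the modified version of $\delta$-fractured maps (Modified Definition \ref{frac def}), which is precisely the version used in the statement and proof of Theorem \ref{FM thm}, so the two usages are consistent and no adjustment is required.
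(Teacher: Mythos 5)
Your proposal is correct and matches the paper's (implicit) argument exactly: the corollary follows by noting that a finite-dimensional resolvable generalized manifold is an ANR, applying Theorem \ref{FM thm} to get the DHP, and then invoking the result from \cite{Halverson 1} that the DHP implies codimension one manifold factor. The caveat you note about the modified $\delta$-fractured maps definition being the one used consistently is also accurate.
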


The maps of $f,g:D^2 \to X$ are said to be \emph{$0$-stitched} provided that there are $0$-dimensional $F_\sigma$ sets $A$ and $B$ contained in the interior of $D^2$ such that $f(D^2-A) \cap g(D^2-B) = \emptyset$.  We say that $f$ and $g$ are \emph{$0$-stitched along $A$ and $B$}.  If $Y$ and $Z$ are sets in $D^2$ missing $A$ and $B$ respectively, then we say that $f$ and
$g$ are \emph{$0$-stitched away from $Y$ and $Z$}. An \emph{infinite $1$-skeleton of $D^2$}, denoted $(K^\infty)^{(1)}$, is defined by $(K^\infty)^{(1)} = \bigcup K_i^{(1)}$, where $\{K_i\}$ is a sequence of triangulations of $D^2$ such that $K_1<K_2< \ldots$ and $mesh(K_i) \to 0$.

\begin{defn}\label{0SD defn}
A space $X$ has the \emph{$0$-stitched disks property} if any two maps $f,g:D^2 \to X$ can be approximated arbitrarily closely  by maps $f',g':D^2 \to X$ such that $f'$ and $g'$ are $0$-stitched along $0$-dimensional $F_\sigma$-sets $A$ and $B$ and away from infinite $1$-skeleta $(K_j^\infty)^{(1)}$, $j=1,2$, of $D^2$  such that $f'|_{(K_1^\infty)^{(1)}} \cup g'|_{(K_2^\infty)^{(1)}}$ is $1-1$.
\end{defn}

\begin{thm} \label{0SD thm} \cite{Halverson 3}
Every resolvable generalized manifold having the $0$-stitched disks property satisfies the DHP, and hence is a codimension one manifold factor.
\end{thm}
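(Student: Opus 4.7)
The plan is to derive the DHP directly from the $0$-stitched disks property, since the last clause (manifold factor) then follows immediately from Halverson's DHP theorem stated above. Because $D\times I$ is homeomorphic to the disk $D^2$, the $0$-stitched disks property applies directly to any pair of path homotopies $f,g:D\times I\to X$. Given $\epsilon>0$, I would first apply Definition \ref{0SD defn} to approximate $f,g$ by $f',g'$ that are $0$-stitched along $0$-dimensional $F_\sigma$-sets $A,B\subset \mathrm{int}(D\times I)$ and away from infinite $1$-skeleta $(K_1^\infty)^{(1)}, (K_2^\infty)^{(1)}$, with $f'|_{(K_1^\infty)^{(1)}}\cup g'|_{(K_2^\infty)^{(1)}}$ injective. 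Any failure of the disjoint-slice condition $f'_t(D)\cap g'_t(D)=\emptyset$ must then be confined to intersections arising from the exceptional images $f'(A)$ and $g'(B)$.

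The crucial geometric step is to cover $A$ and $B$ by locally finite collections of small open balls $\{U_i\}$ and $\{V_j\}$ in $D\times I$ of diameter less than a prescribed $\delta$, chosen so that each $\partial U_i$ lies in $(K_1^\infty)^{(1)}$ and each $\partial V_j$ lies in $(K_2^\infty)^{(1)}$. Such covers exist because $A,B$ are $0$-dimensional $F_\sigma$-sets while the skeleta are dense $1$-dimensional nets with arbitrarily small mesh. The vertical projections $p(U_i),p(V_j)\subset I$ then constitute a controlled collection of short "bad-time" subintervals: outside $\bigcup_i p(U_i)$ the map $f'_t$ avoids the exceptional set $A$, and similarly for $g'$ outside $\bigcup_j p(V_j)$.

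With bad times localized, I would reparameterize as in the proof of Theorem \ref{FM thm}. Partition $I$ into fine subintervals $[(i-1)/2^N,i/2^N]$; on each subinterval, apply a homeomorphism of $D\times I$ compressing each $U_i$ into a tiny ball at a chosen point $(x_i,t_i)$ with all $t_i$'s distinct, so that the horizontal projections of the compressed fracturing balls are pairwise disjoint. Then retime $g'$ (keeping endpoints fixed on each subinterval) so that whenever $f'$ passes through a bad ball at parameter $t$, the simultaneously visited parameter for $g'$ lies outside every $p(V_j)$; this is possible precisely because the $t_i$'s are isolated and the projections $p(V_j)$ form a thin set. The injectivity of $f'|_{(K_1^\infty)^{(1)}}\cup g'|_{(K_2^\infty)^{(1)}}$ is exactly the ingredient that guarantees that whenever both reparameterized homotopies traverse $1$-skeletal slices, their images are disjoint, so no new intersections are introduced.

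The principal obstacle is the \emph{simultaneous} coordination: we must dodge $f'$'s bad balls while not colliding with $g'$'s own exceptional set during the reparameterization, all while maintaining $\epsilon$-proximity to the originals. Handling this requires arranging the covers $\{U_i\},\{V_j\}$ so that the projections $p(U_i)$ and $p(V_j)$ interlace nicely in $I$, in direct analogy to conditions (1)--(5) imposed on the partition $\mathcal{J}$ in the proof of Theorem \ref{FM thm}. Once the reparameterizations are performed on each subinterval and the resulting boundary mismatches are corrected by adjoining small homotopies via the Homotopy Extension Theorem \ref{HET}, the adjunction of the pieces yields approximations $f'',g''$ that are $\epsilon$-close to $f,g$ and satisfy $f''_t(D)\cap g''_t(D)=\emptyset$ for all $t$. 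This establishes the DHP, and Halverson's theorem then delivers the codimension one manifold factor conclusion.
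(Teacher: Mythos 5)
The theorem you are attempting is stated in this paper as a cited result from [Halverson~3]; the present paper does not reprove it, so there is no in-text proof to match against. What the paper \emph{does} prove is the chain for the \emph{closed} $0$-stitched disks property: Theorem~\ref{0SD-DFM} shows closed $0$-stitched $\Rightarrow$ P-DADP, Theorem~\ref{FM} shows P-DADP $\Leftrightarrow$ $\delta$-FMP, and Theorem~\ref{FM thm} shows $\delta$-FMP $\Rightarrow$ DHP. Your sketch is essentially a compressed version of that chain, and as such it is sound \emph{provided $A$ and $B$ are closed}. In that case $A$ and $B$ are compact, totally disconnected, so they admit \emph{finite} covers by pairwise disjoint small balls with boundaries in the given $1$-skeleta; the resulting finitely many ``bad-time'' subintervals can then be interlaced and reparameterized away exactly as in the proof of Theorem~\ref{FM thm}.

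The gap is that Theorem~\ref{0SD thm} concerns the general $0$-stitched disks property, where $A$ and $B$ are only $0$-dimensional $F_\sigma$-sets, not closed. Such a set may be dense in $D^2$ (e.g., a countable dense set), so your step ``cover $A$ and $B$ by locally finite collections of small open balls $\{U_i\},\{V_j\}$ with boundaries in $(K_j^\infty)^{(1)}$'' does not yield finitely many, nor pairwise-disjoint, fracturing balls, and the projections $p(U_i)$ and $p(V_j)$ can jointly cover all of $I$. The finite-partition bookkeeping (conditions (1)--(5) on $\mathcal{J}$ in the proof of Theorem~\ref{FM thm}) then has no analogue, and the reparameterization argument collapses. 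Indeed the paper flags precisely this obstruction: Problem~(5) of the Epilogue asks whether the (general) $0$-stitched disks property implies the P-DADP, i.e., whether a fractured-maps style proof like yours can be made to work; it is listed as open. So your route, as written, would implicitly resolve an open problem, and the missing idea is exactly how to control infinitely many non-isolated bad balls coming from a non-closed $F_\sigma$ exceptional set (the proof in [Halverson~3] handles the $F_\sigma$ case by a different, more delicate inductive/limit argument exploiting the injectivity on infinite $1$-skeleta, rather than a single cover-and-reparameterize step).
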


\noindent For the purposes of this paper, we define the following:

\begin{defn}
A space $X$ has the \emph{closed $0$-stitched disks property} if it has the $0$-stitched disks property where ``$F_\sigma$-sets $A$ and $B$'' is replaced with  ``closed sets $A$ and $B$'' in Definition \ref{0SD defn}.
\end{defn}

\noindent Clearly, if a space has the closed $0$-stitched disks property, then it has the $0$-stitched disks property.  Thus the following is an immediate corollary to Theorem \ref{0SD thm}.

\begin{cor} \label{C0SD}
Every resolvable generalized manifold having the closed $0$-stitched disks property satisfies the DHP, and hence is a codimension one manifold factor.
\end{cor}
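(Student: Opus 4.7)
The plan is to observe that the closed $0$-stitched disks property is literally a strengthening of the $0$-stitched disks property, and then to invoke Theorem \ref{0SD thm} as a black box. The sole technical point is the trivial set-theoretic fact that every closed subset of $D^2$ is an $F_\sigma$ set (write it as a one-term union of itself). Hence any $0$-dimensional closed set also qualifies as a $0$-dimensional $F_\sigma$ set. Consequently, if $f'$ and $g'$ approximate given maps $f,g \colon D^2 \to X$ and are $0$-stitched along closed $0$-dimensional sets $A$ and $B$ away from infinite $1$-skeleta $(K_1^\infty)^{(1)}$ and $(K_2^\infty)^{(1)}$ with $f'|_{(K_1^\infty)^{(1)}} \cup g'|_{(K_2^\infty)^{(1)}}$ injective, then the same pair $(f',g')$ also witnesses the $0$-stitched disks property in the sense of Definition \ref{0SD defn} (with the same $A$, $B$ and the same skeleta).

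Thus every resolvable generalized manifold $X$ with the closed $0$-stitched disks property automatically has the $0$-stitched disks property. Applying Theorem \ref{0SD thm} to $X$ then gives that $X$ satisfies the DHP and is therefore a codimension one manifold factor. There is no real obstacle; the corollary is purely a repackaging of Theorem \ref{0SD thm} together with the observation above, which is exactly why the authors present it as \emph{immediate}.
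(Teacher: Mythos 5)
Your proof is correct and follows exactly the paper's route: observe that every closed set is $F_\sigma$, so the closed $0$-stitched disks property implies the $0$-stitched disks property, and then apply Theorem \ref{0SD thm}. This is precisely what the paper asserts when it calls Corollary \ref{C0SD} an immediate consequence.
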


\subsection{The Disjoint Topographies Property and Related Properties}

A characterization of codimension one manifold factors can be stated in terms of path concordances.  A {\em path concordance} in a space $X$ is a map $F:D \times I \to X \times I$ such that $F(D \times e) \subset X \times e, e \in \{0,1\}.$ Let $proj_X: X \times I \to X$ denote the natural projection map.

\begin{defn}
A metric space ($X,\rho$) satisfies the {\em Disjoint Path
Concordances Property (DCP)} if, for any two path homotopies
$f_i:D \times I \to X$ ($i=1,2$) and any $\epsilon > 0$, there
exist path concordances $F_i: D \times I \to X \times I$ such that
\begin{center}
$F_1(D \times I) \cap F_2(D \times I) = \emptyset$
\end{center}
and $\rho (f_i, {proj}_X \circ F_i) < \epsilon$.
\end{defn}

\begin{thm} \cite{Daverman-Halverson 2}
A resolvable generalized manifold is a codimension one manifold factor if and only if it has the DCP.
\end{thm}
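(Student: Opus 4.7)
The plan is to prove each direction by reducing to the Disjoint Disks Property (DDP) for $X\times\mathbb{R}$.  Since $X\times\mathbb{R}$ is automatically a resolvable generalized $(n+1)$-manifold whenever $X$ is a resolvable generalized $n$-manifold, and (for $n\ge 4$) Edwards's manifold recognition theorem characterizes topological manifolds in this class by the DDP, showing $X$ is a codimension one manifold factor is equivalent to verifying the DDP in $X\times\mathbb{R}$.  The theorem then amounts to the equivalence: DCP for $X$ $\Longleftrightarrow$ DDP for $X\times\mathbb{R}$.

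For the necessity direction ($\Rightarrow$), assume $X\times\mathbb{R}$ is a manifold and hence has the DDP.  Given path homotopies $f_i:D\times I\to X$ and $\epsilon>0$, form the tautological path concordances $\hat F_i:D\times I\to X\times I\subset X\times\mathbb{R}$ by $\hat F_i(x,t)=(f_i(x,t),t)$, which satisfy $\mathrm{proj}_X\circ\hat F_i=f_i$ exactly.  First apply the DAP in $X$ so that the slices $f_i(\cdot,0)$ and $f_i(\cdot,1)$ have pairwise disjoint images (for $i=1,2$).  Then, using the DDP in $X\times\mathbb{R}$ combined with the Homotopy Extension Theorem \emph{rel} $D\times\{0,1\}$, approximate $\hat F_1,\hat F_2$ by maps $F_i$ with disjoint images and $F_i(D\times\{e\})\subset X\times\{e\}$.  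These are the required disjoint path concordances.

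For the sufficiency direction ($\Leftarrow$), assume $X$ has the DCP and verify DDP for $X\times\mathbb{R}$.  Given $G_1,G_2:D^2\to X\times\mathbb{R}$, parameterize $D^2=D\times I$.  The pivotal reduction is to approximate each $G_i$ by a graph-form map $\bar G_i(x,t)=(\bar f_i(x,t),\tau(t))$, where $\tau:I\to\mathbb{R}$ is monotone and $\bar f_i:D\times I\to X$ is a path homotopy.  One achieves this by finely subdividing $D^2$ into rectangles so that each piece has image in some $X\times[a,b]$ of diameter less than a prescribed threshold, reparameterizing each piece so that the $\mathbb{R}$-coordinate is affine in the $t$-direction, and globally stitching the pieces into a map whose $\mathbb{R}$-coordinate depends monotonically on $t$ alone, controlled by the Homotopy Extension Theorem for the ANR $X$.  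After rescaling $\mathbb{R}$ so that $\tau$ ranges over $[0,1]$, apply DCP to $(\bar f_1,\bar f_2)$ to obtain disjoint path concordances $F_i:D\times I\to X\times I$ whose projections are close to $\bar f_i$; these $F_i$ are the sought disjoint approximations of the original $G_1,G_2$.

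The main obstacle lies in the graph-form reduction in the reverse direction: one must deform an arbitrary map $D^2\to X\times\mathbb{R}$ into one whose $\mathbb{R}$-coordinate factors through the projection $D\times I\to I$ and whose resulting ``horizontal'' family is a legitimate path homotopy consumable by DCP, all while preserving an arbitrary approximation tolerance.  The rectangle-by-rectangle straightening, together with the gluing of adjacent pieces into a globally monotone profile, is the technical heart of the argument; once this reduction is in hand, the rest is a direct application of the DCP and standard general-position bookkeeping.
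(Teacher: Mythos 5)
The paper cites this result from \cite{Daverman-Halverson 2} without proof, so there is no in-paper argument to compare against; the proposal must be assessed on its own merits. Your overall frame --- reducing the equivalence to the DDP for $X\times\mathbb{R}$ and invoking Edwards's recognition theorem (for which you should make the dimension hypothesis $n\geq 4$ explicit, so that $X\times\mathbb{R}$ has dimension $\geq 5$) --- is the right one, and the necessity direction ($\Rightarrow$) can be made to work: push the boundary slices $f_i(D\times\{e\})$ a definite distance apart within $X\times\{e\}$ using the DAP, approximate the tautological concordances by disjoint maps with smaller error than that distance, and then squeeze the $\mathbb{R}$-coordinate back to the correct levels by a small homotopy supported near $D\times\{0,1\}$.

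The sufficiency direction ($\Leftarrow$) contains a genuine gap: the proposed graph-form reduction is not achievable. If $\bar{G}(x,t)=(\bar{f}(x,t),\tau(t))$ is to be $\epsilon$-close to $G=(G_X,G_{\mathbb{R}})$, then $|G_{\mathbb{R}}(x,t)-\tau(t)|<\epsilon$ for all $(x,t)$, which forces $G_{\mathbb{R}}$ to be $2\epsilon$-constant in $x$ and essentially monotone in $t$. An arbitrary singular disk in $X\times\mathbb{R}$ satisfies no such constraint: $G_{\mathbb{R}}$ is an arbitrary continuous map $D^2\to\mathbb{R}$, its level sets generically contain circles and critical components, and no choice of product parameterization $D^2=D\times I$ makes those level sets coincide with the arcs $D\times\{t\}$. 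Subdividing $D^2$ into small rectangles mapping into thin slabs $X\times[a,b]$ does not rescue the argument, because adjacent rectangles generally have opposite slope signs in the $\mathbb{R}$-direction, so the local pieces cannot be ``stitched'' into a globally monotone profile without a large perturbation. Packaging arbitrary singular disks into data consumable by the DCP is precisely the technical content of the theorem, and the argument must proceed without ever collapsing $G_{\mathbb{R}}$ to a monotone function of a single parameter --- for instance by working slab-by-slab and applying the concordance property to the level-set decomposition directly, rather than by first forcing a global product structure on the disks.
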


An equivalent characterization of codimension one manifold factors, motivated by viewing the disjoint path concordances property with respect to the projections of the concordances to the parameter space $I$, can be formulated in the realm of topographies.  A \emph{topography $\Upsilon$ on $Z$} is a partition of $Z$ induced by a map $\tau: Z \to I$.  The \emph{$t$-level of $\Upsilon$} is given by $$ \Upsilon_t = \tau^{-1}(t).$$ A \emph{topographical map pair} is an ordered pair of maps $(f,\tau)$ such that $f:Z \to X$ and $\tau: Z \to I$.  The topography associated with $(f, \tau)$ is $\Upsilon$, where $ \Upsilon_t = \tau^{-1}(t).$  Suppose that for $i=1,2$, $\Upsilon^i$ is a topography on $Z$ induced by $\tau_i$ and $f_i: Z \to X$.  Then $(f_1, \tau_1)$ and $(f_2, \tau_2)$ are \emph{disjoint topographical map pairs} provided that for all $t \in I$, $$f_1( \Upsilon^1_t) \cap f_2(\Upsilon^2_t) = \emptyset.$$

\begin{defn}
A space $X$ has the \emph{disjoint topographies property (DTP)} if any two topographical map pairs $(f_i, \tau_i)$ ($i=1,2$), where $f_i: D^2 \to X$, can be approximated arbitrarily closely by disjoint topographical map pairs.
\end{defn}

\begin{thm} \cite{HaRe2}
Every resolvable generalized manifold is a codimension one manifold factor if and only if it  has the DTP.
\end{thm}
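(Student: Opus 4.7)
The plan is to prove the equivalence by way of the chain $\mathrm{CMF}\Rightarrow\mathrm{DTP}\Rightarrow\mathrm{DCP}\Rightarrow\mathrm{CMF}$, where the final implication is supplied by the preceding theorem characterizing codimension one manifold factors via the disjoint path concordances property. The bridging observation is that a topographical map pair $(f,\tau)$ with $f\colon D^2\to X$ and $\tau\colon D^2\to I$ is the same data as a single map $F=(f,\tau)\colon D^2\to X\times I$, and two topographical map pairs are disjoint in the sense of the definition exactly when the corresponding maps into $X\times I$ have disjoint images. Thus DTP is essentially a $(2,2)$-disjoint-disks statement for maps into the slab $X\times I$.

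For $\mathrm{CMF}\Rightarrow\mathrm{DTP}$, suppose $X$ is a resolvable generalized $n$-manifold and $X\times\mathbb{R}$ is an $(n+1)$-manifold. For $n\geq 4$, the Edwards characterization yields the disjoint disks property in $X\times\mathbb{R}$. Given topographical map pairs $(f_i,\tau_i)$, first perturb the $I$-coordinate of each slightly so that the associated maps $F_i=(f_i,\tau_i)$ take values in the open slab $X\times(\delta,1-\delta)$. Apply DDP in $X\times\mathbb{R}$ with tolerance smaller than $\delta$ to obtain disjoint approximations $F_i'$ whose images still lie inside $X\times I$; extracting the $X$- and $I$-coordinates of $F_i'$ recovers the desired disjoint topographical map pairs approximating $(f_i,\tau_i)$. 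The low-dimensional cases $n\leq 3$ reduce to standard manifold arguments.

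For $\mathrm{DTP}\Rightarrow\mathrm{DCP}$, given path homotopies $f_i\colon D\times I\to X$ and $\varepsilon>0$, let $\pi(x,t)=t$ and apply DTP to the pairs $(f_i,\pi)$ with tolerance $\varepsilon'\ll\varepsilon$ to produce disjoint approximations $(f_i',\tau_i')$. The associated maps $F_i=(f_i',\tau_i')\colon D\times I\to X\times I$ then have disjoint images with some positive gap $\eta>0$ between them in $X\times I$ by compactness. Because $\tau_i'$ is $\varepsilon'$-close to $\pi$, the values $\tau_i'(x,0)$ and $\tau_i'(x,1)$ are already close to $0$ and $1$ respectively, so on thin collars $D\times[0,\delta]$ and $D\times[1-\delta,1]$ one may reparameterize only the $I$-coordinate of $F_i$, holding $f_i'$ fixed, to obtain a map $F_i^*$ sending $D\times\{0\}$ into $X\times\{0\}$ and $D\times\{1\}$ into $X\times\{1\}$, i.e.\ a genuine path concordance. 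The principal obstacle is choosing $\delta$ and $\varepsilon'$ small enough that the boundary adjustment moves every point in $X\times I$ by strictly less than $\eta/2$; granted this quantitative estimate, disjointness is preserved and the $X$-projections of $F_i^*$ remain within $\varepsilon$ of $f_i$, yielding the DCP conclusion. Invoking the preceding theorem's $\mathrm{DCP}\Rightarrow\mathrm{CMF}$ then closes the loop.
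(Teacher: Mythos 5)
The paper does not prove this theorem but cites it from \cite{HaRe2}, so there is no internal argument to compare against; I evaluate your proposal on its own merits. Your bridging observation (a topographical map pair $(f,\tau)$ is exactly a map $F=(f,\tau)\colon D\times I\to X\times I$, and disjointness of level sets is disjointness of images) is correct, and the direction $\mathrm{CMF}\Rightarrow\mathrm{DTP}$ via the disjoint disks property of the $(n+1)$-manifold $X\times\mathbb{R}$ is sound for $n\geq 4$. The step $\mathrm{DTP}\Rightarrow\mathrm{DCP}$, however, contains a genuine gap which you flag but do not close. The ``quantitative estimate'' you invoke --- that the boundary adjustment move points by less than $\eta/2$ --- is circular: $\eta$ is the gap between the images of the DTP-approximations $F_1,F_2$, which is determined only \emph{after} $\varepsilon'$ has been fixed and the approximations chosen, and DTP supplies no lower bound on $\eta$ in terms of $\varepsilon'$. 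Concretely, if $f_1=f_2$, the disjoint approximations may be taken arbitrarily close to one another for any prescribed $\varepsilon'$, so $\eta$ can be made as small as one pleases; once both $\tau_1'(x,0)$ and $\tau_2'(y,0)$ are pushed to $0$, the adjusted maps collide whenever $f_1'(x,0)=f_2'(y,0)$, and nothing in your argument prevents this.

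The missing ingredient is a preliminary application of the disjoint arcs property (available in resolvable generalized $n$-manifolds, $n\geq 3$): first adjust $f_1,f_2$ so that $f_1(D\times\{e\})\cap f_2(D\times\{e\})=\emptyset$ for $e=0,1$, and record the resulting positive gap $\delta_0$ between these boundary arcs. Then choose $\varepsilon'$ small relative to $\delta_0$ and to moduli of uniform continuity $\mu_1,\mu_2$ of $f_1,f_2$, apply DTP to obtain disjoint $(f_i',\tau_i')$, and compose the $I$-coordinate with a monotone surjection $\phi\colon I\to I$ satisfying $\phi([0,\varepsilon'])=\{0\}$ and $\phi([1-\varepsilon'],1])=\{1\}$. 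Composing with $\mathrm{id}_X\times\phi$ sends $D\times\{e\}$ into $X\times\{e\}$, and the only \emph{new} coincidences it can create are between points whose $I$-coordinates both lie in $[0,\varepsilon']$ (or both in $[1-\varepsilon',1]$); those have domain parameters within $2\varepsilon'$ of $D\times\{0\}$ (resp.\ $D\times\{1\}$), so such a coincidence would force $f_1(D\times\{0\})$ and $f_2(D\times\{0\})$ to be within $2\varepsilon'+\mu_1(2\varepsilon')+\mu_2(2\varepsilon')<\delta_0$ of one another, a contradiction. Without this DAP pre-processing the estimate you ask for is simply not available, and the reparameterization can destroy disjointness.
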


Although a powerful tool, providing an actual characterization of codimension one manifold factors, the disjoint topographies property (and the disjoint path concordances property) is generally accessed through other weaker forms of general position properties.

\begin{defn} \label{CRP def}
A generalized $n$-manifold $X$ has the \emph{crinkled ribbons property (CRP)} provided that any constant homotopy $f: K \times I \to X$, where $K$ is a $1$-complex can be approximated arbitrarily closely by a map $f':K \times I \to X$ so that:
\begin{enumerate}
\item $f'(K \times \{0\}) \cap f'(K \times \{1\}) = \emptyset$; and
\item $dim(f'(K \times I))\leq n-2$.
\end{enumerate}
\end{defn}

\begin{thm} \cite{HaRe2}
Every resolvable generalized $n$-manifold, $n \geq 4$, with the crinkled ribbons property has the DTP, and is therefore a codimension one manifold factor.
\end{thm}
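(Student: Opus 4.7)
The plan is to deduce DTP from CRP by a slab-by-slab analysis of the topographies. Let $(f_1, \tau_1), (f_2, \tau_2)$ be topographical map pairs with $f_i:D^2 \to X$ and $\tau_i:D^2 \to I$, and let $\epsilon>0$. First I would replace each $\tau_i$ by a simplicial PL approximation $\tau_i'$ relative to a sufficiently fine triangulation of $D^2$, and then choose a fine partition $0 = t_0 < t_1 < \cdots < t_N = 1$ of $I$ missing the critical values of both $\tau_1'$ and $\tau_2'$. Each level set $K^i_k := (\tau_i')^{-1}(t_k)$ is then a 1-complex in $D^2$, and each slab $S^i_k := (\tau_i')^{-1}([t_{k-1},t_k])$ carries a product structure $K^i_k \times [t_{k-1},t_k]$.

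Second, by choosing $N$ large, continuity of $f_i$ makes $f_i|_{S^i_k}$ uniformly close to the constant homotopy $h^i_k:K^i_k \times [t_{k-1},t_k] \to X$ defined by $h^i_k(x,s)=f_i(x,t_{k-1})$. Applying CRP to each $h^i_k$ I obtain an approximation $(h^i_k)'$ whose total image has dimension at most $n-2$ and whose top- and bottom-level images are disjoint. Gluing these slabwise approximations via the Homotopy Extension Theorem, using small collar adjustments along each $K^i_k$ to reconcile the boundary behavior between consecutive slabs, produces an $\epsilon/2$-approximation $f_i'$ of $f_i$ such that every slab image $f_i'(S^i_k)$ has dimension at most $n-2$ and, in particular, $f_i'(K^i_{k-1}) \cap f_i'(K^i_k) = \emptyset$.

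Third, I would adjust $f_1'$ and $f_2'$ so that for each $k$ the slab images $f_1'(S^1_k)$ and $f_2'(S^2_k)$ are disjoint in $X$; disjointness of corresponding slab images immediately implies level-by-level disjointness $f_1'((\tau_1')^{-1}(t)) \cap f_2'((\tau_2')^{-1}(t)) = \emptyset$ for every $t \in I$, and hence yields DTP. Since both slab images have dimension at most $n-2$ in the resolvable generalized $n$-manifold $X$ with $n \geq 4$, I would appeal to general position for low-dimensional compacta in $X$, invoking DAP together with the $1$-LCC type properties available for $(n-2)$-dimensional compacta in resolvable generalized manifolds, to push the images off each other inductively in $k$. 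Once DTP is established, the concluding assertion that $X$ is a codimension one manifold factor follows from the earlier characterization theorem.

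The main obstacle is this third step: CRP produces low-dimensional slab images one at a time but does not automatically render the two families of images jointly disjoint. The delicate point is to arrange the CRP approximations so that they can be further refined to slabwise-disjoint images without disturbing the topographies $\tau_i'$. I would handle this by performing the CRP construction on $f_2'$ relative to the already-fixed low-dimensional compactum $f_1'(S^1_k)$, exploiting the flexibility in how the CRP ribbon can be routed in the complement of a fixed $(n-2)$-dimensional subset of $X$, and verifying that the resulting perturbations preserve the PL structure of $\tau_i'$.
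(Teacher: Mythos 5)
Your third step contains a genuine gap that I do not see how to repair as stated. You propose to make the slab images $f_1'(S^1_k)$ and $f_2'(S^2_k)$ disjoint via ``general position for low-dimensional compacta,'' citing DAP and $1$-LCC properties. But both slab images have dimension $n-2$, and two compacta of dimension $n-2$ in an $n$-dimensional generalized manifold \emph{cannot} generically be separated: the relevant dimension count is $(n-2)+(n-2)=2n-4\geq n$ for $n\geq 4$, which is precisely the regime of the theorem. The $0$-LCC embedding enjoyed by $(n-2)$-dimensional compacta in a resolvable generalized $n$-manifold (Corollary 26.2A of \cite{Daverman book}) only lets one adjust maps of at most $1$-dimensional objects to miss such a compactum; it gives no handle on moving another $(n-2)$-dimensional compactum off it. DAP is the $(1,1)$-DDP and is likewise far too weak. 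Your proposed patch --- running CRP ``relative to'' the fixed set $f_1'(S^1_k)$ --- is not supported by the definition of CRP, which only promises low dimension and disjoint ends for the approximated constant homotopy, with no avoidance of an arbitrary prescribed $(n-2)$-dimensional target.

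The actual role of the dimension bound in CRP is subtler, and this is exactly the idea your outline misses. One applies CRP once to the constant homotopy underlying $f_2$ to obtain a ribbon $R=g'(K\times I)$ with $\dim R\leq n-2$ and $g'_0(K)\cap g'_1(K)=\emptyset$. The low-dimensionality is then used \emph{only} to invoke $0$-LCC of $R$ and approximate $f_1$ so that a fine infinite $1$-skeleton of $D^2$ maps into $X-R$. Each small $2$-cell image of $f_1'$ then has diameter less than $\operatorname{dist}(g'_0(K),g'_1(K))$ and therefore misses at least one of the two ends of the ribbon; the desired level-by-level disjointness is arranged by reparametrizing $g'$ cell by cell, not by separating two large-dimensional sets. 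This is precisely the content of Theorem \ref{CRP thm} (CRP implies P-DADP*) in this paper and of the fractured-topography argument in \cite{HaRe2}. In short: the slab decomposition is fine, but the mechanism by which CRP yields DTP is a one-sided $0$-LCC adjustment of a $1$-skeleton against a fixed ribbon, followed by a smallness argument, and your general-position step would need to be replaced wholesale by that mechanism.
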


\begin{defn} \label{TCRP def}
A generalized $n$-manifold $X$ has the \emph{twisted crinkled ribbons property (CRP-T)} provided that any constant homotopy $f: D \times I$ can be approximated arbitrarily closely by a map $f':D \times I$ so that:
\begin{enumerate}
\item $f'(D \times \{0\} ) \cap f'(D \times \{1 \} )$ is a finite set of points; and
\item $dim(f'(D \times I))\leq n-2$.
\end{enumerate}
\end{defn}

\begin{thm} \cite{HaRe2}
Every resolvable generalized $n$-manifold of dimension $n \geq 4$ having  the twisted crinkled ribbons property and the property that points
are $1$-LCC embedded in $X$ has the DTP, and is therefore a codimension one manifold factor.
\end{thm}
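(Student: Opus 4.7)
My plan is to adapt the proof that the (untwisted) crinkled ribbons property implies the DTP, using the $1$-LCC hypothesis at points to absorb the finitely many extra endpoint intersections that CRP-T allows (compared to CRP).

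Given topographical map pairs $(f_i,\tau_i)$ with $f_i:D^2\to X$ and $\epsilon>0$, I would first perturb each $\tau_i$ slightly so that its generic level sets are $1$-complexes meeting $\partial D^2$ transversally, and partition $I$ as $0=t_0<t_1<\cdots<t_N=1$ with the partition so fine that each slab $S_{i,j}=\tau_i^{-1}([t_{j-1},t_j])$ is mapped by $f_i$ to a set of diameter $<\epsilon/4$. On each such slab, $f_i|_{S_{i,j}}$ is close to a constant homotopy, so CRP-T produces an approximation $\tilde f_{i,j}$ whose image has dimension $\leq n-2$ and whose boundary levels at $t_{j-1}$ and $t_j$ meet in only finitely many points. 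Since CRP-T applied to pairs of constant homotopies also yields the DAP, I can further arrange that for each $j$ the cross-pair boundary levels $\tilde f_{1,j}(\tau_1^{-1}(t_j))$ and $\tilde f_{2,j}(\tau_2^{-1}(t_j))$ intersect in only finitely many points.

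Next I would use the $1$-LCC hypothesis to eliminate these finite same-level incidences: for each exceptional intersection point $p$, the $1$-LCC condition furnishes a singular disk in a neighborhood $U\setminus\{p\}$ bounded by a small loop in a smaller neighborhood $V\setminus\{p\}$, which serves as a guide for a small local perturbation of one of the two colliding boundary arcs at $t_j$, pushing it off $p$. After finitely many such pushes every interface level $t_j$ is cleared. The main obstacle is coordinating these boundary perturbations with the slab-internal reparameterizations of $\tau_i$ carried out next; I manage this by fixing the cleaned boundary levels once and for all and then adjusting the $\tau_i$-times only in the slab interiors.

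Finally, inside each slab the two thin codimension-$\geq 2$ images $\tilde f_{1,j}(S_{1,j})$ and $\tilde f_{2,j}(S_{2,j})$ may still intersect, but only within a set of diameter $<\epsilon/2$. I would reparameterize each $\tau_i$ on the slab (fixing the endpoint values $t_{j-1}$ and $t_j$) by a time-shift of the kind used in the proof of Theorem~\ref{FM thm}: subdivide the slab into finer strips and schedule the $\tau_i$-times so that any point lying in both slab images is traversed by $\tilde f_1$ and $\tilde f_2$ at different $t$-values, yielding level-disjointness in the slab interior. Adjoining the slabwise data then produces the required $\epsilon$-close disjoint topographical map pair, which establishes the DTP for $X$ and therefore, by the characterization in the preceding subsection, the codimension one manifold factor conclusion.
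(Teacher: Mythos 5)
The paper cites this result to \cite{HaRe2} without reproducing the argument, but it assembles an indirect route: CRP-T together with $1$-LCC points gives P-DADP (Theorem \ref{TCRP thm}); since $1$-LCC of points implies the $(0,2)$-DDP, Theorem \ref{P-DADP equiv} upgrades this to P-DADP*, which is equivalent to FRP* (Theorem \ref{pre main}), which in turn gives DTP (Theorem \ref{FRP cor}). Your direct slab-by-slab attack departs from this and has genuine gaps. First, CRP-T (Definition \ref{TCRP def}) is stated only for constant homotopies $f:D\times I\to X$ with $D$ an interval; the slabs $S_{i,j}$ you form have generic level sets that are $1$-complexes with several arc and circle components, so CRP-T cannot be invoked on them --- that would require CRP, not CRP-T, and the two are genuinely different hypotheses.

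Second, the closing reparametrization step is not justified: thinness of the slab images ($\dim\le n-2$) alone does not allow you to time-shift $\tau_i$ to clear same-level collisions. What is actually needed is the fracturing condition underlying FRP --- that $f_1^{-1}(im(f_2))$ is contained in small disjoint balls $B_j$ with $\tau_2\circ f_2^{-1}\circ f_1(B_j)\neq I$. The small-ball containment comes from the $0$-LCC embeddedness of an $(n-2)$-dimensional ribbon image (Corollary 26.2A of \cite{Daverman book}) plus a general-position push of a fine $1$-skeleton of the domain disk off the ribbon; the ``misses some level'' clause uses the disjointness of the ribbon ends (away from the finite exceptional set) together with the small diameter of $f_1(B_j)$ relative to their separation. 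Your proposal establishes neither piece, so the scheduling argument has no purchase. In addition, the $1$-LCC hypothesis is used incorrectly: as in the proof of Theorem \ref{TCRP thm}, its proper role is to move the \emph{disk} off the finitely many CRP-T end-intersection points (this is exactly the $(0,2)$-DDP), after which a closed neighborhood $W$ of those points is excised; pushing an \emph{arc} off a point is a $0$-LCC/DAP matter and automatic, so as written your proposal never uses $1$-LCC essentially. Your side claim that CRP-T yields the DAP is also unnecessary and dubious, since the DAP is automatic for resolvable generalized $n$-manifolds of dimension $n\ge 3$ (Proposition 26.3 of \cite{Daverman book}).
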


\begin{rmk}
In both Definitions \ref{CRP def} and \ref{TCRP def}, the condition
\begin{quote}
(2) $dim{(im(f'))}\leq n-2$.
\end{quote}
may be replaced with
\begin{quote}
(2*) $im(f')$ is 0-LCC embedded in $X$ with empty interior.
\end{quote}
This follows from a result by Borel (see Proposition 4.9 in \cite{Borel}) stating that if $X$ is a cohomological $n$-manifold and $Z$ is a closed subset of $X$, then $dim(Z) \leq n-2$ if and only if $Z$ has empty interior and is 0-LCC embedded in $X$.
\end{rmk}

A topographical map pair $(f, \tau)$ is said to be in the \emph{$\mathcal{K}$ category} if for some $1$-complex $K$, $K \times I$ is the domain of $f$ and $\tau$, and $f:K \times I \to X$ so that $K \times \{e\} \subset \tau^{-1}(e)$ for $e=0,1$.  In this case, we shall denote $(f, \tau) \in \mathcal{K}$.  A topographical map pair $(f,\tau) \in \mathcal{K}$ is said to be in the \emph{$\mathcal{K}_c$ category} if $f:K \times I \to X$ is a constant homotopy on $K$ and $\tau: K \times I \to I$ such that $\tau(x,t)=t$.

\begin{defn}
Let $(f_i, \tau_i) \in \mathcal{K}$ be such that $f_i: K_i \times I \to X$ and $\tau_i: K_i \times I \to I$.  Then $(f_1, \tau_1)$ is said to be {\it fractured} over a topographical map pair $(f_2, \tau_2)$ if there are disjoint balls $B_1, B_2, \ldots, B_m$ in $K_1 \times I$ such that:
    \begin{enumerate}
        \item $f_1^{-1}(im(f_2)) \subset \bigcup_{j=1}^m int(B_j)$; and
        \item $\tau_2 \circ f_2^{-1}\circ f_1(B_i) \ne I$.
    \end{enumerate}
\end{defn}

\begin{defn}\label{FRP def}
A space $X$ is said to have the \emph{fuzzy ribbons property (FRP)} provided that for any topographical map pairs, $(f_1, \tau_1) \in \mathcal{K}$ and $(f_2, \tau_2) \in
\mathcal{K}_c$, and $\epsilon > 0$ there are topographical map pairs $(f'_i, \tau'_i) \in \mathcal{K}$, for $i=1,2$,  such that $f'_i$ is an $\epsilon$-approximation  of  $f_i$  and $(f'_1, \tau'_1)$ is fractured over $(f'_2, \tau'_2)$.
\end{defn}

\begin{thm} \label{FRP thm}  \cite{HaRe2} Every resolvable generalized manifold having the FRP has the DTP, and is therefore is a codimension
one manifold factor.  \end{thm}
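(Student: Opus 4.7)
The proof proceeds in close parallel to the proof of Theorem~\ref{FM thm}, but in the topographical setting. The overall strategy is to decompose each topographical map pair into an alternating adjunction of varying pieces lying in $\mathcal{K}$ and constant pieces lying in $\mathcal{K}_c$, apply FRP to each such pairing to obtain a fractured approximation, and then reparameterize the topography on each fracture ball so as to skip over the bad level values.

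Concretely, given $(f_i,\tau_i)$ with $f_i:D^2\to X$ and $\epsilon>0$, I would identify $D^2 \cong K \times I$ with $K = I$ and choose a uniform subdivision $t_k = k/2^N$ with $N$ large enough that the diameters of $f_i(K \times [t_{k-1}, t_k])$ and the corresponding $\tau_i$-images are smaller than $\epsilon/4$. After normalizing so that each strip $(f_i|_{K \times [t_{k-1}, t_k]}, \tau_i|_{K \times [t_{k-1}, t_k]})$ lies in $\mathcal{K}$, I would form adjunctions $\tilde{f}$ and $\tilde{g}$ exactly as in the proof of Theorem~\ref{FM thm}: each $\tilde{f}$-strip alternates between a varying slice of $f_1$ and a constant piece built from the corresponding slice of $f_2$, and vice versa for $\tilde{g}$. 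Applying FRP to each varying-over-constant pair produces $\zeta$-approximations fractured in balls $B_j$ satisfying $\tau_2 \circ f_2^{-1}(f_1(B_j)) \ne I$, which supplies a good level value $t_j \in I$ with $f_1(B_j) \cap f_2(\tau_2^{-1}(t_j)) = \emptyset$.

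With the fracture in hand, I would reparameterize $\tau_2$ on the strip using a partition $\{J_1,\ldots,J_r\}$ of $I$ chosen so that the subinterval containing the $t$-range of $B_j$ realizes the original $\tau_2$ on a small neighborhood of $t_j$, mirroring the construction of $\gamma''_i$ in the proof of Theorem~\ref{FM thm}. This forces the level sets of $\tau_1$ and of the reparameterized $\tau_2$ to have disjoint images under $f_1$ and $f_2$. Assembling across all strips gives global $(f_i', \tau_i')$ that are disjoint topographical map pairs, establishing the DTP; the codimension one manifold factor conclusion then follows from the characterization of codimension one manifold factors by the DTP stated earlier in this section.

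The main obstacle is the normalization step converting the given arbitrary topographies on $D^2$ into pieces lying in $\mathcal{K}$, since the given $\tau_i$ need not satisfy $K \times \{e\} \subset \tau_i^{-1}(e)$; this requires approximating $\tau_i$ on each strip by a topography whose behaviour at the strip boundaries matches nicely with adjacent strips, and then reassembling the per-strip reparameterizations into a globally continuous topography. This assembly is technically more delicate than the corresponding reparameterization in Theorem~\ref{FM thm}, because the disjointness is measured across level sets of $\tau$ rather than at fixed time parameters, so the seams between strips must be handled with care to preserve both the approximation bounds and the level-by-level disjointness.
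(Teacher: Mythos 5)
The paper states Theorem~\ref{FRP thm} as a citation to \cite{HaRe2} and does not supply a proof of its own, so there is no internal argument to compare against; I assess your sketch directly. Your high-level strategy --- strip-decompose $D^2$, alternate varying and constant pieces, apply FRP to fracture each pair, reparameterize to dodge the fracture balls --- is a sensible analogue of the proof of Theorem~\ref{FM thm}, and your closing paragraph correctly senses that the assembly is the hard part. But you leave that assembly entirely open, and the gap is the substance of the theorem rather than a loose end. FRP takes inputs $(f_1,\tau_1)\in\mathcal{K}$ and $(f_2,\tau_2)\in\mathcal{K}_c$, where $\mathcal{K}_c$ requires $f_2$ to be a literal constant homotopy with $\tau_2(x,t)=t$, while DTP allows arbitrary $f_i:D^2\to X$ and arbitrary $\tau_i:D^2\to I$. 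In the proof of Theorem~\ref{FM thm} the alternating-adjunction trick manufactures such constant pieces by pausing one map while the other moves, and this works because DHP disjointness is measured at fixed time $t$, so the alternation directly controls what must be disjoint at each moment. In DTP disjointness is $f_1(\tau_1^{-1}(t))\cap f_2(\tau_2^{-1}(t))=\emptyset$, and the level sets $\tau_i^{-1}(t)$ are global and do not respect a strip decomposition of the domain. To run your argument you must glue per-strip topographies into a single continuous $\tau_i'$ on all of $D^2$ with level-by-level disjointness under $f_i'$ holding uniformly across seams where the local topographies were produced by independent applications of FRP. Nothing in your sketch addresses how this is to be done.

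A second concern: after obtaining a fractured pair $(f_1',\tau_1')$ over $(f_2',\tau_2')$ with balls $B_j$ and safe levels $t_j$, you propose to reparameterize $\tau_2'$ in imitation of the modification of $\gamma_i'$ in Theorem~\ref{FM thm}. But $f_2'$ is an $\epsilon$-approximation of a constant homotopy, so $f_2'\circ(\tau_2')^{-1}$ is nearly insensitive to a reparameterization of $\tau_2'$, and it is not clear that this achieves the required disjointness. A more direct modification is to push $\tau_1'$ to be constant equal to $t_j$ on $B_j$ (interpolating on a small collar around each $B_j$), so that for $t\ne t_j$ every level set of $\tau_1'$ avoids all the $int(B_j)$ and hence, by fracture condition (1), $f_1'$ carries it off $im(f_2')$ entirely, while at $t=t_j$ fracture condition (2) gives the disjointness inside $B_j$. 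Whichever modification is used, the level-by-level verification needs to be written out explicitly; it cannot rest on analogy with the time-parametric argument of Theorem~\ref{FM thm}, because that argument exploits the coincidence of domain time and disjointness parameter, which fails here.
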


\begin{defn} A space $X$ is said to have the \emph{strong fuzzy ribbons property (FRP*)} provided that it satisfies the conditions of FRP in Definition \ref{FRP def}, where $\tau'_2: K_2 \times I \to I$ is specified to be the natural projection map, i.e., $\tau'_2(x,t)=t$.
\end{defn}

Clearly, a space that has the FRP* also has the FRP.  Thus we have the following corollary to Theorem \ref{FRP thm}.

\begin{thm} \label{FRP cor} Every resolvable generalized manifold having the FRP* has the DTP, and is therefore is a codimension
one manifold factor.  \end{thm}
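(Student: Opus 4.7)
The plan is extremely short, since the statement is set up as an immediate corollary to Theorem \ref{FRP thm}. My strategy is to verify that the defining conditions of FRP* are a special case of the defining conditions of FRP, so that any space satisfying FRP* automatically satisfies FRP, and then to invoke Theorem \ref{FRP thm} as a black box.

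Concretely, I would suppose that $X$ is a resolvable generalized manifold satisfying FRP*, fix topographical map pairs $(f_1,\tau_1)\in\mathcal{K}$ and $(f_2,\tau_2)\in\mathcal{K}_c$, and fix $\epsilon>0$, as in Definition \ref{FRP def}. Since $X$ has FRP*, there are topographical map pairs $(f'_1,\tau'_1)\in\mathcal{K}$ and $(f'_2,\tau'_2)\in\mathcal{K}$, with $f'_i$ an $\epsilon$-approximation of $f_i$ and $(f'_1,\tau'_1)$ fractured over $(f'_2,\tau'_2)$, such that in addition $\tau'_2(x,t)=t$. Since FRP asks only for the existence of such pairs without any constraint on the form of $\tau'_2$, the very same data $(f'_i,\tau'_i)$ witness that FRP holds at this $(f_1,\tau_1), (f_2,\tau_2), \epsilon$. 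Hence $X$ has FRP.

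Applying Theorem \ref{FRP thm}, $X$ has the DTP, and therefore is a codimension one manifold factor, completing the proof. There is essentially no obstacle; the only thing to check is that the approximations provided by FRP* still lie in $\mathcal{K}$ (not $\mathcal{K}_c$), which is exactly what Definition \ref{FRP def} demands of the approximating pair for the second coordinate. The corollary is so immediate that the pre-theorem remark ``Clearly, a space that has the FRP* also has the FRP'' already carries all the mathematical content.
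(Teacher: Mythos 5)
Your proposal is correct and matches the paper's approach exactly: the paper itself derives this statement as an immediate corollary of Theorem \ref{FRP thm}, relying on the observation that any witness for FRP* (with $\tau'_2$ the projection) is already a valid witness for FRP. Nothing further is needed.
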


\section{The Piecewise Disjoint arc-disk Property}

In this section we introduce the piecewise disjoint arc-disk property and the piecewise disjoint arc-disk property*.  We prove the main results associated with these properties.

\begin{defn}
A space $X$ is said to have the \emph{piecewise disjoint arc-disk property (P-DADP)} if for every $f:D^2 \to X$, $\alpha: I \to X$, and  $\epsilon > 0$ there is a cell complex $T$ of $D^2$ and approximations $f':D^2 \to X$ and $\alpha': I \to X-f'(T^{(1)})$ so that for each $\sigma \in T^2$, there is an $\epsilon$-homotopy $H_{\sigma} : I \times [0,1] \to X-f'(T^{(1)})$ from $\alpha'$ to a map $\alpha'': I \to X-f'(\sigma)$.
\end{defn}

\begin{defn}
A space $X$ is said to have the \emph{piecewise disjoint arc-disk property* (P-DADP*)} if for every $f:D^2 \to X$, $\alpha: L \to X$ where $L$ is a $1$-complex, and  $\epsilon > 0$ there is a cell complex $T$ of $D^2$ and approximations $f':D^2 \to X$ and $\alpha': L \to X-f'(T^{(1)})$ so that for each $\sigma \in T^2$, there is an $\epsilon$-homotopy $H_{\sigma} : L \times [0,1] \to X-f'(T^{(1)})$ from $\alpha'$ to a map $\alpha'': L \to X-f'(\sigma)$.
\end{defn}

Notice that the P-DADP (and the P-DADP*) does not necessarily imply the DADP.  The image of $\alpha''$ may still necessarily intersect $f'(T-\sigma)$.  Another important note is that the requirement that the homotopy pushing $\alpha'$ off of $f'(\sigma)$ miss $f'(\partial \sigma) \subset f'(T^{(1)})$ necessarily requires dimension $n \geq 4$.  Obviously, this is not a property satisfied in a $3$-manifold.

One may wonder if this property is too much to hope for in a space that does not have the DADP.  However, the P-DADP is satisfied in every example presently known to the authors of a codimension one manifold factor of dimension $n\geq 4$ detected by general position properties.  This is because the P-DADP is implied by other general position properties satisfied by these spaces.

\subsection{P-DADP Variations}

We begin with some preliminary results establishing connections between the P-DADP, the P-DADP*, and variations of these properties.

\begin{lem} \label{arcs lem}
Let $X$ be a path connected ANR that has the P-DADP.  Then $X$ satisfies the following:  For every $f:D^2 \to X$, $\alpha_i: I \to X$, where $i=1,\ldots,m$, and  $\epsilon > 0$ there is a cell complex $T$ of $D^2$ and approximations $f':D^2 \to X$ and $\alpha'_i: I \to X-f'(T^{(1)})$ so that for each $\sigma \in T^2$, there is an $\epsilon$-homotopy $H^i_{\sigma} : I \times [0,1] \to X-f'(T^{(1)})$ from $\alpha'_i$ to a map $\alpha''_i: L \to
X-f'(\sigma)$.  Moreover, if there are closed sets $A_i \subset I$ such that $\alpha'_i(A_i) \cap f'(\sigma) = \emptyset$, then we may require $H^i_{\sigma}|_{A_i \times I}$ to be a constant homopty.
\end{lem}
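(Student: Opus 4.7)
The plan is to reduce the multi-arc statement to a single application of the P-DADP via path-connectedness, and then to upgrade the resulting homotopies to be constant on $A_i$ by a time-reparametrization trick.

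First, I concatenate $\alpha_1,\dots,\alpha_m$ into a single arc $\alpha\colon I\to X$: partition $I$ into $2m-1$ consecutive closed subintervals $I_1,J_1,I_2,\dots,J_{m-1},I_m$; let $\alpha|_{I_k}$ be a linear reparametrization of $\alpha_k$; and let $\alpha|_{J_k}$ be any continuous path in $X$ from $\alpha_k(1)$ to $\alpha_{k+1}(0)$, which exists by path-connectedness. Apply the P-DADP to $(f,\alpha)$ with parameter $\epsilon$ to produce a cell complex $T$ of $D^2$, approximations $f'\colon D^2\to X$ and $\alpha'\colon I\to X-f'(T^{(1)})$, and for each $\sigma\in T^{(2)}$ an $\epsilon$-homotopy $H_\sigma\colon I\times I\to X-f'(T^{(1)})$ from $\alpha'$ to some $\alpha''\colon I\to X-f'(\sigma)$. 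Define $\alpha'_i$ and $H^i_\sigma$ as the pullbacks of $\alpha'|_{I_i}$ and $H_\sigma|_{I_i\times I}$ under the affine identification $I\cong I_i$; the basic conclusion of the lemma follows.

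For the constancy clause, fix $\sigma\in T^{(2)}$ and closed $A_i\subset I$ with $\alpha'_i(A_i)\cap f'(\sigma)=\emptyset$. Since $\alpha'_i$ automatically misses $f'(T^{(1)})$, the compact set $\alpha'_i(A_i)$ is positive distance from the closed set $C:=f'(T^{(1)}\cup\sigma)$. By continuity I pick nested open neighborhoods $V\subset\overline V\subset U$ of $A_i$ in $I$ with $\mathrm{dist}(\alpha'_i(\overline U),C)>\epsilon$; here I assume $\epsilon$ was chosen in the first step to be small relative to this gap, which is legitimate when the data $(f,\alpha_i,A_i,\epsilon)$ are given at once. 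Pick a continuous cutoff $\phi\colon I\to[0,1]$ that is identically $0$ on $V$ and identically $1$ off $U$, and set
\[
\widetilde H^i_\sigma(x,t) := H^i_\sigma\bigl(x,\phi(x)\,t\bigr).
\]
Then $\widetilde H^i_\sigma$ is continuous, takes values in $X-f'(T^{(1)})$, has tracks of diameter at most $\epsilon$, starts at $\alpha'_i$, and is constant on $V\times I\supset A_i\times I$ since $\phi\equiv 0$ on $V$. At $t=1$ one checks case by case: off $U$ it equals $\alpha''(x)\in X-f'(\sigma)$; on $V$ it equals $\alpha'_i(x)\in X-C$; and on the transition region $\overline U-V$ it lies within $\epsilon$ of $\alpha'_i(\overline U)$, which is more than $\epsilon$ from $C$, so $\widetilde H^i_\sigma(x,1)\notin f'(\sigma)$.

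The main obstacle is the distance inequality $\mathrm{dist}(\alpha'_i(\overline U),C)>\epsilon$. If one prefers the reading in which $A_i$ is specified only after $\alpha'_i$ is fixed, so the gap cannot be controlled by shrinking $\epsilon$ beforehand, then the reparametrization argument above must be replaced by an iterative refinement: one uses the Homotopy Extension Theorem~\ref{HET} on the ANR $X-f'(T^{(1)})$ to produce, inside a collar neighborhood of $\alpha'_i(\overline U)$, a finer homotopy whose tracks are smaller than the (a posteriori) gap, and patches it with $H^i_\sigma$ outside. Once the inequality holds, the same construction runs uniformly in $i=1,\dots,m$ and yields the full conclusion.
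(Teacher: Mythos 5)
Your proof of the first (non-``moreover'') part matches the paper's argument exactly: concatenate the $\alpha_i$ into a single arc $\beta$ through connecting paths (possible since $X$ is path connected), apply the P-DADP to $(f,\beta)$, and restrict the resulting $\alpha'$ and $H_\sigma$ to the subintervals carrying the $\alpha_i$. That part is correct.

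The ``moreover'' clause is where the gap is. Your reparametrization $\widetilde H^i_\sigma(x,t)=H^i_\sigma(x,\phi(x)t)$ is a natural idea, but its correctness hinges on the inequality $\operatorname{dist}\bigl(\alpha'_i(\overline U),\,f'(T^{(1)}\cup\sigma)\bigr)>\epsilon$, and you cannot arrange this by shrinking $\epsilon$ in advance: the condition defining $A_i$ involves $\alpha'_i$ and $f'$, both of which are only produced \emph{by} the P-DADP application, so $A_i$ (and hence the gap) is a posteriori data. You flag this yourself, but the proposed fallback does not repair it. The ``iterative refinement'' idea --- produce near $\alpha'_i(\overline U)$ a homotopy with tracks smaller than the a posteriori gap and patch it with $H^i_\sigma$ outside --- runs into the very same obstruction in the transition annulus: for $x$ just outside the small neighborhood, $\alpha'_i(x)$ may lie arbitrarily close to $f'(\sigma)$, so no homotopy whose tracks are bounded by the $A_i$-gap can push such points off $f'(\sigma)$, and any continuous interpolation between ``stay put'' and ``run the full $H^i_\sigma$'' produces the same potentially bad intermediate levels that your reparametrization does. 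So as written the fallback is not an argument but a restatement of what needs to be shown. For comparison, the paper dispatches the moreover clause in one sentence (``follows from an application of the map extension property for ANR's''), so the published proof is terse there too; but the burden on you, having chosen to give details, is that the details actually close the case where the gap is smaller than $\epsilon$, and they do not. You would need to rebuild the tail $H^i_\sigma$ using the ANR extension theorems in a way that genuinely decouples the track size near $A_i$ from the globally prescribed $\epsilon$, rather than assuming the inequality holds.
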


\begin{proof}  Divide $I$ into $2m-1$ intervals.  Find arcs $\gamma_i$ from $\alpha_i(1)$ to $\alpha_{i+1}(0)$ for $i=1, \ldots, m-1$.  Now define a single arc $\beta: I \to X$ that is the adjunction of all of these arcs $\alpha_1 \cdot \gamma_1\cdot\alpha_2\cdot\ldots \cdot \alpha_{n-1}\cdot\gamma_{n-1}\cdot\alpha_n$ such that $$\beta(t) = \left\{
                    \begin{array}{ll}
                      \alpha_i\left((2m-1)t-(2i-2)\right), & \hbox{ if } t \in \left[ \frac{2i-2}{2m-1}, \frac{2i-1}{2m-1}\right]\\
                      \gamma_i\left((2m-1)t-(2i-1)\right), & \hbox{ if } t \in \left[ \frac{2i-1}{2m-1}, \frac{2i}{2m-1}\right]
                    \end{array}
                  \right. $$
Now apply the P-DADP to $f$ and $\beta$.  The desired maps for the arcs $\alpha_i$ are obtained be restricting the maps associated with $\beta$ to the proper subintervals.

The moreover part of the lemma follows from an application of the map extension property for ANR's.
\end{proof}

\begin{thm} \label{P-DADP equiv}
Let $X$ be a path connected ANR that has the P-DADP and the $(0,2)$-DDP.  Then $X$ has also the P-DADP*. \end{thm}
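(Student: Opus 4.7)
The plan is to reduce the P-DADP* condition for a general $1$-complex $L$ to the multi-arc P-DADP supplied by Lemma \ref{arcs lem}, applied edge by edge, after first exploiting the $(0,2)$-DDP to place the images of the vertices of $L$ safely off $f(D^2)$. Let $V=\{v_1,\dots,v_k\}$ be the $0$-skeleton of $L$ and $e_1,\dots,e_m$ its closed $1$-simplices.

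First, by iterated application of the $(0,2)$-DDP together with the Map Extension Theorem (Corollary \ref{MET}), produce approximations $f_0$ of $f$ and $\alpha_0$ of $\alpha$ with $\alpha_0(V)\cap f_0(D^2)=\emptyset$. Set $p_i=\alpha_0(v_i)$ and choose pairwise disjoint, path-connected open neighborhoods $U_i$ of $p_i$ in $X$ with $U_i\cap f_0(D^2)=\emptyset$ (available since $X$ is a locally path-connected ANR). Pick closed neighborhoods $N_i$ of $v_i$ in $L$ with $\alpha_0(N_i)\subset U_i$, and set $A_j=e_j\cap\bigl(\bigcup_i N_i\bigr)$, a closed neighborhood of the two endpoints of $e_j$ inside $e_j$.

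Next, apply Lemma \ref{arcs lem} to $f_0$ and the arcs $\{\alpha_0|_{e_j}\}_{j=1}^{m}$ with the closed sets $\{A_j\}$ and a control parameter $\epsilon'<\epsilon$ chosen small enough that the resulting $f'$ misses every $U_i$ and the resulting edge approximations $\alpha_j'$ satisfy $\alpha_j'(A_j)\subset U_{i(j)}$ near each vertex. Then $\alpha_j'(A_j)\cap f'(\sigma)=\emptyset$ for every $\sigma\in T^2$, so the ``moreover'' clause of Lemma \ref{arcs lem} provides a cell complex $T$ of $D^2$ together with $\epsilon'$-homotopies $H_\sigma^j:e_j\times[0,1]\to X-f'(T^{(1)})$ from $\alpha_j'$ to a map disjoint from $f'(\sigma)$, constant on $A_j\times I$.

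The remaining obstacle, and the main difficulty of the argument, is that the edge approximations $\alpha_j'$ need not coincide at their common vertex values: at a vertex $v_i$, each incident edge $e_j$ yields a possibly different value $q_{i,j}=\alpha_j'(v_i)\in U_i$. To repair this, for each such pair $(i,j)$ choose a short path in $U_i$ from $p_i$ to $q_{i,j}$ and reparameterize $\alpha_j'$ on a sub-neighborhood of $v_i$ inside $A_j$ by prepending this path, producing $\tilde\alpha_j'$ with $\tilde\alpha_j'(v_i)=p_i$; update $H_\sigma^j$ correspondingly, keeping it constant on the (smaller) sub-neighborhood. Because $U_i\cap f'(T^{(1)})=\emptyset$, the modification stays in $X-f'(T^{(1)})$; because the $U_i$ have been taken small, sizes remain of order $\epsilon$. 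Gluing the modified edge maps and homotopies across $V$ yields $\alpha':L\to X-f'(T^{(1)})$ and homotopies $H_\sigma:L\times[0,1]\to X-f'(T^{(1)})$ meeting the conclusion of the P-DADP*. The role of the $(0,2)$-DDP is precisely to create the safe neighborhoods $U_i$ in which this vertex reconciliation can be carried out without damaging the disjointness from $f'(T^{(1)})$.
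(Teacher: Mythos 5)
Your argument follows the paper's own strategy: use the $(0,2)$-DDP to separate the vertex images from $f(D^2)$, apply Lemma~\ref{arcs lem} edge by edge with its ``moreover'' clause to keep the homotopies stationary near the endpoints, and then reconcile the edgewise approximations at the shared vertices inside a zone that misses $f'(D^2)$. The only (cosmetic) difference is the reconciliation device: the paper tapers the reverse of the $\xi$-homotopy $G_i$ from $\alpha_i$ to $\alpha_i'$ supplied by the ANR homotopy extension property, which automatically drags each $\alpha_i'|_{\partial\kappa_i}$ back to the common original values $\alpha(L^{(0)})$, whereas you splice in explicit connecting paths within locally path-connected neighborhoods $U_i$; both carry out the same repair in essentially the same safe region.
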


\begin{proof}
Suppose $X$ is a path connected ANR that has the P-DADP and the $(0,2)$-DDP.
Let $f:D^2 \to X$, $\alpha: L \to X$, where $L$ is a $1$-complex, and  $\epsilon > 0$.  By the $(0,2)$-DDP we may assume without loss of generality that $\alpha(L^{(0)}) \cap f(D^2) = \emptyset$. Let $\xi < \min\{\epsilon,\frac 12 dist \left( \alpha(L^{(0)}), f(D^2)\right)\}$.  Let $\{\kappa_1, \ldots,\kappa_m\}$ denote the $1$-simplices of $L$.   Let $\alpha_i: \kappa_i \to X$.  Let $\zeta>0$ be such that any $\zeta$-approximation of $\alpha_i$ is $\xi$-homotopic to $\alpha_i$, for any $i = 1, \ldots, m$.

By Lemma \ref{arcs lem}, there is a cell complex $T$ of $D^2$ and $\zeta$-approximations $f':D^2 \to X$ and $\alpha_i': \kappa_i \to X-f'(T^{(1)})$, such that for each $\sigma \in T^2$, there are $\epsilon$-homotopies $H^i_{\sigma} : \kappa_i \times [0,1] \to X-f'(T^{(1)})$ of $\alpha'_i$ to a map $\alpha''_i: \kappa_i\to X-f'(\sigma)$.   By our choice of $\zeta$, there is a $\xi$-homotopy $G_i: \kappa_i \times [0,1] \to X$ from $\alpha_i$ to $\alpha'_i$. By our choice of $\xi$, $G_i(\partial \kappa_i \times [0,1]) \cap f'(D^2) = \emptyset$.  Let $A_i$ be a closed neighborhood of $\partial \kappa_i$ in $\kappa_i$ such that $G_i(A_i \times [0,1]) \cap f'(D^2) = \emptyset$.  Now apply the moreover part of Lemma \ref{arcs lem} to require that $H^i_{\sigma}$ is a constant homotopy on $A_i$.

Now we define new maps $\tilde{\alpha}'_i:\kappa_i \to X$ such that $\tilde{\alpha}'_i|_{\partial \kappa_i} = {\alpha}_i|_{\partial \kappa_i}$  and  homotopies $\tilde{H}^i_{\sigma}:\kappa_i \times I \to X -f'(T^{(1)})$ taking $\tilde{\alpha}'$ to $\tilde{\alpha}'': \kappa_i \to X-f'(\sigma)$ as follows:  Recall that we have required that $(H^i_{\sigma})_t(x) = \alpha_i'(x)$ for all $x \in A_i$ and $t \in [0,1]$.  Let $\overline{G}_i$ be the reverse of $G_i$, taking $\alpha'_i$ back to $\alpha_i$, so that $(\overline{G}_i)_t = (G_i)_{1-t}$.  Taper $\overline{G}_i$ to get a $\xi$-homotopy  $\overline{G}^*_i$ that is the constant map $\alpha'_i$ on $\kappa_i - int(A_i)$, and ends at $\alpha_i$ on $\partial \kappa_i$.  Let $\tilde{\alpha}'_i = (\overline{G}_i^*)_1$.  Note that $\tilde{\alpha}'_i(A_i) \cap  f'(D^2)= \emptyset$ and $\tilde{\alpha}'_i: \kappa_i \to X - f'(T^{(1)})$.  Let $\tilde{H}^i_{\sigma}: \kappa_i \times [0,1] \to X - f'(T^{(1)})$ be the homotopy such that $$\tilde{H}^i_{\sigma}(x,t) = \left\{
        \begin{array}{ll}
          \tilde{\alpha}'_i(x), & \hbox{ if } x \in A_i\\
          H^i_{\sigma}(x,t), & \hbox{ otherwise.}
        \end{array}
      \right.$$
Note that $\tilde{\alpha}''_i = (\tilde{H}^i_{\sigma})_1: \kappa_i \to X - f'(\sigma)$.

Let $\alpha' = \bigcup \tilde{\alpha}'_i$ and $H_{\sigma} = \bigcup \tilde{H}^i_{\sigma}$.  Then for each $\sigma \in T^{(2)}$,  $H_{\sigma}: L\times I \to X-f'(T^{(1)})$ is an $\epsilon$-homotopy from $\alpha': L  \to X-f'(T^{(1)})$ to  $\alpha'' = \bigcup \tilde{\alpha}''_i: L  \to X-f'(D^2)$.  Therefore, the cell complex $T$ and the maps ${\alpha}'$, $f'$ and  ${H}_{\sigma}$ are the desired maps to demonstrate that $X$ has the DADP*.
\end{proof}

The following lemma provides a generalization of the P-DADP*  that will be needed in the proof of one of the key theorems.

\begin{lem} \label{L1}
An ANR $X$ has the P-DADP* if and only if it has the following property:

\begin{quote}
(\dag) For every $\epsilon >0$ and $f: K \times I \to X$ and $\alpha: L \to X$, where $K$ and $L$ are $1$-complexes, there exist a cell complex $T$ of $K \times I$ and approximations $f'$ of $f$ and $\alpha': L \to X-f'(T^{(1)})$ of $\alpha$ such that for each $\sigma
\in T^{(2)}$, there is an $\epsilon$-homotopy $H_{\sigma} : L \times [0,1]
\to X-f'(T^{(1)})$ from $\alpha'$ to a map $\alpha'': L \to X-f'(\sigma)$.
\end{quote}

\noindent Moreover, if there is a closed set $A \subset L$ such that $\alpha'(A) \cap f'(\sigma) = \emptyset$, then we may require $H_{\sigma}|_{A \times I}$ to be a constant homopty.
\end{lem}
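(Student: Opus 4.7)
The plan is to prove the two implications of the stated equivalence separately; the reverse direction $(\dag) \Rightarrow$ P-DADP* is nearly immediate, while the forward direction P-DADP* $\Rightarrow (\dag)$ requires an inductive construction over the natural 2-cells of $K \times I$.

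For $(\dag) \Rightarrow$ P-DADP*, I view $K = I$ as the 1-complex with two vertices and one edge, so $K \times I \cong D^2$. Applying $(\dag)$ to a given map $f: D^2 \to X$ (transported to $K \times I$ via a fixed homeomorphism), a map $\alpha: L \to X$, and $\epsilon > 0$ yields a cell complex $T$ of $K \times I \cong D^2$, approximations $f'$ and $\alpha'$, and homotopies $H_\sigma$ satisfying precisely the conclusion of P-DADP*.

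For the main direction, I use the natural cell structure on $K \times I$, whose 2-cells are $\sigma^j = \kappa_j \times I$ for each 1-simplex $\kappa_j$ of $K$; enumerate these as $\sigma^1, \dots, \sigma^N$. Set $f_0 = f$ and $\alpha_0 = \alpha$. At stage $j$, apply P-DADP* to $f_{j-1}|_{\sigma^j}: \sigma^j \to X$ (using $\sigma^j \cong D^2$) together with $\alpha_{j-1}: L \to X$, for a sufficiently small $\epsilon_j > 0$. This yields a cell complex $T_j$ of $\sigma^j$, an approximation $\tilde{f}_j$ of $f_{j-1}|_{\sigma^j}$, an approximation $\alpha_j: L \to X - \tilde{f}_j(T_j^{(1)})$ of $\alpha_{j-1}$, and for each 2-cell $\tau \in T_j^{(2)}$ an $\epsilon_j$-homotopy $H^{(j)}_\tau$ from $\alpha_j$ to a map with image in $X - \tilde{f}_j(\tau)$. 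Extend $\tilde{f}_j$ across $K \times I$ via the Homotopy Extension Theorem to produce $f_j: K \times I \to X$ that equals $\tilde{f}_j$ on $\sigma^j$ and $f_{j-1}$ outside a tiny neighborhood of $\sigma^j$. After all $N$ stages, set $f' = f_N$ and $\alpha' = \alpha_N$, and let $T$ be the cell complex of $K \times I$ obtained by assembling the $T_j$ along the shared 1-cells of $K \times I$. For each 2-cell $\sigma \in T^{(2)}$, $\sigma$ is a 2-cell of some $T_j$, and the required homotopy $H_\sigma$ is obtained by adjoining to $H^{(j)}_\sigma$ a short homotopy connecting $\alpha'$ to $\alpha_j$ in the complement of $f'(T^{(1)})$, provided by successive applications of the Map Extension Theorem.

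The main obstacle is propagating disjointness across stages: modifications at some stage $k > j$ may in principle destroy $\alpha_j(L) \cap \tilde{f}_j(T_j^{(1)}) = \emptyset$ or the analogous disjointness from the 1-skeleton enjoyed by the previously constructed homotopies $H^{(j)}_\tau$. I control this by choosing each $\epsilon_{j+1}$ strictly smaller than the positive minimum distance from the compact set $\alpha_j(L) \cup \bigcup_\tau H^{(j)}_\tau(L \times I)$ to the compact set $f_j(T_j^{(1)})$, so that the cumulative perturbation at later stages cannot violate any strict inequality established earlier. A secondary bookkeeping issue is ensuring that the cell complexes $T_j$ agree along shared boundary 1-cells of $\sigma^j$ and $\sigma^k$; this can be arranged by applying P-DADP* on a slightly shrunk subdisk of each $\sigma^j$ and extending the cell structure across a thin collar in a way compatible with previously fixed subdivisions, or equivalently by taking a common subdivision. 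Finally, the moreover clause follows from the Map Extension Theorem by tapering each $H_\sigma$ to the constant homotopy on the closed set $A$, which is possible because $\alpha'(A)$ is a positive distance from the compact set $f'(\sigma)$, analogous to the tapering step in the proof of Theorem \ref{P-DADP equiv}.
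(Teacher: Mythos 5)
Your proposal is correct and follows essentially the same route as the paper: both prove the reverse direction by specializing $K$ to an interval, and both prove the forward direction by inducting over the $1$-simplices $\kappa_j$ of $K$, applying P-DADP* on each piece $\kappa_j \times I$, extending via the Homotopy/Map Extension Theorems, and choosing each successive approximation sufficiently close so that the disjointness conclusions established at earlier stages persist. The paper phrases the control more abstractly ("require the approximations to be sufficiently close that property $(\dag)$ still holds with respect to $T_1\cup\ldots\cup T_i$"), whereas you make the distance estimates explicit and add a remark about compatibility of the $T_j$ along shared edges, but these are presentational refinements of the same argument rather than a genuinely different approach.
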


\begin{proof}  It suffices to show the forward direction because the reverse direction is trivial.

Suppose that $X$ has the P-DADP*. Let $\epsilon >0$ and $f: K \times I \to X$, $\alpha: L \to X$, where $K$ and $L$ are $1$-complexes.   Let $\{\kappa_1, \kappa_2, \ldots, \kappa_m\}$ be the collection of $1$-simplices of $K$.  We will define the desired cell complex $T$ and maps $f'$ and $\alpha'$ inductively for $i=1,\ldots, m$.

By the P-DADP* and the map extension properties of ANR's, there is a cell complex $T_1$ of $\kappa_1 \times I$ and approximations $f'_1$ of $f$ and $\alpha'_1: L \to X -f'_1(T_1^{(1)})$ of $\alpha$ so that the conclusion of property (\dag) holds with respect to $T_1$, $f'_1|_{\kappa_1 \times I}$, and $\alpha'_1$.  Note that any sufficiently close approximation of $f'_1$ and $\alpha'_1$ also satisfies the same conditions.

Fix $i$ where $1\leq i < m$.  Suppose that $T_j$ for $1\leq j\leq i$, $f'_i$, and $\alpha'_i: L \to X - f'_i(T_1^{(1)} \cup \ldots \cup T_i^{(1)})$ have been defined so that the conclusion of property (\dag) holds with respect to $T_1 \cup \ldots \cup T_i$, $f'_i|_{(\cup_{j=1}^{i}\kappa_j) \times I}$, and $\alpha'_i$.  By the P-DADP* and the map extension properties of ANR's, there is a cell complex $T_{i+1}$ of $\kappa_{i+1} \times I$ and approximations of $f'_{i+1}$ of $f'_i$ and $\alpha'_{i+1}: L \to X -f'_{i+1}( T_{i+1}^{(1)})$ of $\alpha'_i$ so that the conclusion of property (\dag) holds with respect to $T_{i+1}$,  $f'_{i+1}|_{\kappa_{i+1} \times I}$ and $\alpha'_{i+1}$.  Moreover, we require the approximations to be sufficiently close  that it is also the case that $\alpha'_{i+1}: L \to X -f'_{i+1}(T_1^{(1)} \cup \ldots \cup T_{i}^{(1)})$ and property (\dag) holds with respect to $T_1 \cup \ldots \cup T_i$,  $f'_{i+1}|_{(\cup_{j=1}^{i}\kappa_j) \times I}$ and $\alpha'_{i+1}$.  Then property (\dag) holds with respect to $T_1 \cup \ldots \cup T_{i+1}$,  $f'_{i+1}|_{(\cup_{j=1}^{i+1}\kappa_j) \times I}$ and $\alpha'_{i+1}$.

 Let $T=T_1\cup \ldots \cup T_m$, $f'=f'_m$ and $\alpha'=\alpha'_m$.  Then $T$ is the desired cell complex and $f'$ and $\alpha'$ are the desired maps to conclude our proof in this direction.

The moreover part of the lemma follows from an application of the map extension property for ANR's.

\end{proof}

\subsection{The Proof of the Main Theorem}

We now aim to prove the main theorem and a $1$-complex analogue of the main theorem. The $1$-complex analogue will follow as a corollary of the following:

\begin{thm} \label{pre main}
Suppose that $X$ is an ANR.  Then $X$ satisfies the P-DADP* if and only if $X$ satisfies the FRP*.
\end{thm}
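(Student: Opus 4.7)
The strategy is a natural dictionary between the two properties: a map $f:D^2\to X$ is a topographical map pair $f_1:I\times I\to X$ via $D^2\cong I\times I$, while a constant homotopy $f_2(x,t)=\bar f_2(x)$ on a $1$-complex $K_2$ encodes the ``arc'' data $\alpha=\bar f_2:K_2\to X$ of P-DADP$^*$. Under this dictionary, the $2$-cells $\sigma\in T^{(2)}$ play the role of the fracturing balls $B_j$, and each pushing homotopy $H_\sigma$ produces a time level $t_j$ of $f'_2$ whose slice avoids $f'_1(B_j)$.

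\medskip

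\noindent\textbf{$(\Rightarrow)$.} Given $(f_1,\tau_1)\in\mathcal{K}$ with $f_1:K_1\times I\to X$, $(f_2,\tau_2)\in\mathcal{K}_c$ with $f_2(x,t)=\bar f_2(x)$, and $\epsilon>0$, I apply Lemma~\ref{L1} to $f_1$ and $\bar f_2:K_2\to X$ with parameter $\epsilon/2$. This yields a cell complex $T$ of $K_1\times I$, approximations $f'_1$ of $f_1$ and $\alpha'$ of $\bar f_2$ with $\alpha'(K_2)\cap f'_1(T^{(1)})=\emptyset$, and for each $\sigma\in T^{(2)}$ an $(\epsilon/2)$-homotopy $H_\sigma:K_2\times I\to X-f'_1(T^{(1)})$ from $\alpha'$ to $\alpha''_\sigma:K_2\to X-f'_1(\sigma)$. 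Enumerate $T^{(2)}=\{\sigma_1,\ldots,\sigma_m\}$, partition $I$ at $0=s_0<s_1<\cdots<s_{2m}=1$, and define $f'_2:K_2\times I\to X$ by realizing $H_{\sigma_j}$ on $K_2\times[s_{2j-2},s_{2j-1}]$ and its reverse on $K_2\times[s_{2j-1},s_{2j}]$. The result $f'_2$ is continuous, has image in $X-f'_1(T^{(1)})$, satisfies $d(f'_2(x,t),\alpha'(x))<\epsilon/2$, and is an $\epsilon$-approximation of the constant homotopy $f_2$. Taking $\tau'_2$ the natural projection, $\tau'_1=\tau_1$, and $t_j=s_{2j-1}$, one has $f'_2(\cdot,t_j)=\alpha''_{\sigma_j}$. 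Since $\mathrm{im}(f'_2)$ avoids $f'_1(T^{(1)})$, the compact set $(f'_1)^{-1}(\mathrm{im}(f'_2))$ lies in $\bigcup\mathrm{int}(\sigma_j)$, so I may select pairwise disjoint closed balls $B_j\subset\mathrm{int}(\sigma_j)$ covering it. Fracturing condition (1) is then automatic, and (2) follows because $f'_1(B_j)\subset f'_1(\sigma_j)$ is disjoint from $f'_2(K_2\times\{t_j\})=\alpha''_{\sigma_j}(K_2)$.

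\medskip

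\noindent\textbf{$(\Leftarrow)$.} Given $f:D^2\to X$, $\alpha:L\to X$, and $\epsilon>0$, identify $D^2\cong I\times I$, set $f_1=f$ with $\tau_1$ the projection to $I$, and $f_2:L\times I\to X$ the constant homotopy $f_2(x,t)=\alpha(x)$ with $\tau_2$ the projection. Applying FRP$^*$ with parameter $\epsilon/2$ gives approximations $f'_1,f'_2$ (with $\tau'_2$ the natural projection) and disjoint balls $B_1,\ldots,B_m\subset D^2$ with times $t_j$ satisfying $f'_1(B_j)\cap f'_2(L\times\{t_j\})=\emptyset$. Extend $\{B_j\}$ to a cell complex $T$ of $D^2$ whose $2$-cells are the $B_j$ together with filler $2$-cells cellulating $D^2-\bigcup\mathrm{int}(B_j)$; set $f'=f'_1$ and $\alpha'=f'_2(\cdot,0)$. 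Then $\alpha'$ is $(\epsilon/2)$-close to $\alpha$, and fracturing condition (1) yields $\alpha'(L)\cap f'(T^{(1)})=\emptyset$. For $\sigma=B_j$, set $H_{B_j}(x,s)=f'_2(x,st_j)$; this lies in $\mathrm{im}(f'_2)\subset X-f'(T^{(1)})$, ends in $X-f'(B_j)$, and is an $\epsilon$-homotopy because $f'_2$ is $(\epsilon/2)$-close to a constant. For a filler $2$-cell $\sigma$, disjointness of $2$-cell interiors forces $\sigma\cap(f')^{-1}(\mathrm{im}(f'_2))=\emptyset$, so $\alpha'(L)\cap f'(\sigma)=\emptyset$ and the constant homotopy suffices.

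\medskip

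\noindent\textbf{Main obstacle.} The delicate step is in the $(\Rightarrow)$ direction: one must assemble the family $\{H_{\sigma_j}\}$ into a single homotopy $f'_2:K_2\times I\to X$ that (i) stays uniformly close to the constant $f_2$, (ii) keeps its image outside $f'_1(T^{(1)})$, and (iii) realizes each $\alpha''_{\sigma_j}$ as a time slice at a distinct level $t_j$. The forward-then-reverse adjoining at the common basepoint $\alpha'$ accomplishes all three, but this crucially relies on Lemma~\ref{L1} to deliver every $H_{\sigma_j}$ with matching source $\alpha'$ and with images confined to $X-f'_1(T^{(1)})$.
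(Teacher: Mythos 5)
Your proposal is correct and follows essentially the same route as the paper's proof: in the forward direction you form $f_2'$ by adjoining each $H_{\sigma_j}$ with its reverse (exactly the paper's $f_2' = H_{\sigma_1}\cdot H_{\sigma_1}^-\cdots H_{\sigma_m}\cdot H_{\sigma_m}^-$) and place the time slices at levels $t_j$ where $f_2'$ equals $\alpha''_{\sigma_j}$, and in the reverse direction you build $T$ with the fracturing balls as distinguished $2$-cells and realize a truncation of $f_2'$ as $H_{B_j}$. The only differences are cosmetic (a generic partition $s_j$ instead of $\frac{2j-1}{2m}$, and keeping $\tau_1'=\tau_1$ rather than resetting it to the projection, which is immaterial since the fracture condition constrains only $\tau_2'$).
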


\begin{proof}
First we prove the forward direction.  Suppose that $X$ satisfies the P-DADP*. Let $(f_1, \tau_1) \in \mathcal{K}$, $(f_2, \tau_2) \in \mathcal{K}_c$, and $\epsilon> 0$, where $f_i: K_i \times I \to X$. Let $\alpha = f_2|_{K_2 \times \{0\}}$.  Apply the P-DADP* and Lemma \ref{L1} to obtain the promised cell complex $T$ of $K_1 \times I$ and approximations $f_1':K_1 \times I\to X$ of $f_1$ and $\alpha':K_2 \to X-f(T^{(1)})$ of $\alpha$ satisfying the condition of property (\dag) in Lemma \ref{L1}.  Let $\{\sigma_1, \ldots, \sigma_m\}$ denote the collection of all $2$-cells in $T$.  For each $j=1,\ldots,m$, let $H_{\sigma_j}: K_2 \times I \to X-f'_1(T^{(1)})$ be the promised homotopy that pushes $\alpha'$ to $\alpha''_j: K_2 \to X-f'_1(\sigma_j)$ and $H_{\sigma_j}^-$ be the reverse of $H_{\sigma}$ that pushes $\alpha''$ back to $\alpha'$.  Let $f'_2 = H_{\sigma_1}\cdot H_{\sigma_1}^- \cdot H_{\sigma_2}\cdot H_{\sigma_2}^-  \cdot \ldots \cdot H_{\sigma_m}\cdot H_{\sigma_m}^- $.  Note that $f_2': K_2 \times I \to X - f_1(T^{(1)})$.  Let $\tau_i': K_i \times I \to I$ be the natural projection map.

We claim that $(f'_1, \tau'_1)$ is fractured over $(f'_2,\tau'_2)$.  Since $f_1'(T^{(1)}) \cap f'_2(K_2 \times I)= \emptyset$, it follows that $f_1'(\partial \sigma_j) \cap f'_2(K_2 \times I)= \emptyset$ for each $\sigma_j \in T^{(2)}$. Thus there is a ball $B_j$ in the interior of each $\sigma_j \in T^{(2)}$   such that  $\sigma_j \cap (f_1')^{(-1)}(im(f'_2)) \subset B_j$. Hence $(f'_1)^{-1}(im(f'_2)) \subset \bigcup_{j=1}^m int(B_j)$.   This is the first condition that must be satisfied.  Also note that $(f'_2)_{\frac{2j-1}{2m}} = \alpha''_j$ and $f_1'(\sigma _j) \cap \alpha''_j(K_2) = \emptyset$.  Hence $f'_1(B_j) \cap (f'_2)_{\frac{2j-1}{2m}}(K_2) =\emptyset$.  Thus  $\tau'_2 \circ (f'_2)^{-1}\circ f'_1(B_j) \subset I-\{ \frac{2j-1}{2m} \}$ which means $\tau'_2 \circ (f'_2)^{-1}\circ f'_1(B_j) \ne I$.  This is the second condition that must be satisfied.  Hence $(f'_1, \tau'_1)$ is fractured over $(f'_2,\tau'_2)$.  Therefore $X$ satisfies the FRP*.

To prove the reverse direction, suppose that $X$ satisfies the FRP*. Let $f:D^2 \to X$ and $\alpha:L \to X$.  Let $f_1:D \times I \to X$ be the map $f$, where $D^2 = D \times I$ and  $f_2:L \times I \to X$ be the constant homotopy on $\alpha$.  Let $(f_i,\tau_i)$  be the topographical map pair on the homotopy $f_i$ such that $\tau_i: \kappa_i \times I \to I $ is the natural projection map.  By the FRP* there are topographical map pairs $(f'_i,\tau'_i) \in \mathcal{K}$ such that $f'_i$ is an $\epsilon$-approximation of  $f_i$,   $(f_1', \tau'_1)$ is fractured over $(f_2', \tau'_2)$, where $\tau'_2$ is the natural projection map.  This means that  there are disjoint balls $B_1, B_2, \ldots, B_m$ in $D
\times I$ such that:
    \begin{enumerate}
        \item $(f'_1)^{-1}(im(f'_2)) \subset \bigcup_{j=1}^m int(B_j)$; and
        \item $\tau'_2 \circ (f'_2)^{-1}\circ f'_1(B_j) \ne I$.
    \end{enumerate}
Without loss of generality we may assume that the balls $B_1, B_2, \ldots, B_m$ are subpolyhedra of $D \times I$.  Define a cell complex $T$  from a partition of $D \times I$ into a collection of $2$-cells so that $\{B_1, B_2, \ldots, B_m\}\subset T^{(2)}$.  Let $f'=f'_1$ and $\alpha'=(f_2')_0$.

Note that since $f'_1(T^{(1)}) \cap im(f'_2) = \emptyset$, it follows from condition (1) that $f'(T^{(1)}) \cap \alpha'(L) = \emptyset$.  Thus we get both $\alpha': L \to X-f'(T^{(1)})$ and  $f_2':L \times I \to X - f'(T^{(1)})$.

Let $\sigma \in T^{(2)}$.  If $\sigma \notin \{B_1, B_2, \ldots, B_m\}$, let $H_\sigma: L \times I \to X - f'(T^{(1)})$ be the constant homotopy on $\alpha'$.  Then $\alpha''=(H_\sigma)_1 = \alpha': L \times I \to X - f'(\sigma)$.  If $\sigma \in \{B_1, B_2, \ldots, B_m\}$, then let $j$ be the index such that $\sigma = B_j$.  Choose $t_j$ so that $\tau_1 \circ f_1^{-1}\circ f_2(B_j)$ misses $t_j$.  Let $H_\sigma: L \times I \to X - f'(T^{(1)})$ be the homotopy that realizes $f'_2|_{L \times [0,t_j]}$. Then $H_\sigma$ pushes $\alpha'$  to  $\alpha'' = (H_\sigma)_{t_j}: L \to X - f'(\sigma)$.

Therefore $T$ is the desired cell complex and $f'$ and $\alpha'$, together with the homotopies $H_\sigma$, are the desired maps which show that $X$ has the P-DADP*
\end{proof}

We can now derive the $1$-complex analogue of the main result as a corollary to Corollary \ref{FRP cor} and Theorem \ref{pre main}.

\begin{cor}
If $X$ is a resolvable generalized $n$-manifold that satisfies the P-DADP*, then $X \times \mathbb{R}$ is an $(n+1)$-manifold.
\end{cor}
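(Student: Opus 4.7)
The proof should be essentially a one-line chain of implications using results already established earlier in the paper. The plan is to observe that any resolvable generalized $n$-manifold is automatically an ANR (cited in Section 2 via \cite{MiRe}), so Theorem \ref{pre main} applies and converts the hypothesis P-DADP* into the equivalent property FRP*. Once FRP* is in hand, Corollary \ref{FRP cor} (equivalently Theorem \ref{FRP thm} together with the trivial observation that FRP* implies FRP) delivers the DTP, which in turn is exactly the characterization of codimension one manifold factors among resolvable generalized manifolds. Thus $X \times \mathbb{R}$ is an $(n+1)$-manifold.

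Concretely, the steps I would record are: first, invoke that finite-dimensional resolvable spaces are ANRs so Theorem \ref{pre main} applies to $X$; second, conclude $X$ has the FRP*; third, quote Corollary \ref{FRP cor} (or equivalently Theorem \ref{FRP thm}) to conclude $X$ is a codimension one manifold factor, which by definition means $X \times \mathbb{R}$ is an $(n+1)$-manifold.

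There is really no obstacle of substance here: all of the conceptual work has been pushed into the equivalence between the P-DADP* and the FRP* proved as Theorem \ref{pre main}, and into the earlier development linking the FRP to the DTP and then to the manifold-factor property. The only thing worth checking carefully is that the hypotheses of Theorem \ref{pre main} (namely that $X$ is an ANR) and of Corollary \ref{FRP cor} (resolvability) are both satisfied, which they are by hypothesis. For this reason I would expect the written proof to be at most three sentences, simply citing Theorem \ref{pre main} and Corollary \ref{FRP cor} in succession.
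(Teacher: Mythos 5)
Your proposal matches the paper's own derivation exactly: the paper states that this corollary follows from Theorem \ref{pre main} (P-DADP* equivalent to FRP* for ANRs, and finite-dimensional resolvable generalized manifolds are ANRs) together with Theorem \ref{FRP cor} (FRP* implies codimension one manifold factor for resolvable generalized manifolds). Nothing is missing.
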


\begin{thm} \label{FM}
Suppose $X$ is an ANR.  Then $X$ satisfies the $\delta$-FMP if and only if $X$ satisfies the P-DADP.
\end{thm}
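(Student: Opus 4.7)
The plan is to prove both implications by direct dual constructions, closely following the template of Theorem~\ref{pre main} but additionally producing or reading off the disjoint small-diameter balls required by the modified $\delta$-fracture definition.

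For the direction $\delta$-FMP $\Rightarrow$ P-DADP, given $f \colon D^2 \to X$, $\alpha \colon I \to X$, and $\epsilon > 0$, I identify $D^2 = D \times I$ and set $g(y,s) = \alpha(y)$, a constant homotopy. Applying the $\delta$-FMP with $\delta < \epsilon/2$ and sufficiently close approximations yields $f'$, $g'$, and pairwise disjoint balls $B_1, \ldots, B_m$ of diameter $< \delta$ satisfying conditions (1)--(3). I extend $\{B_j\}$ to a cell complex $T$ of $D^2$ in which each $B_j$ is a 2-cell and the complement is partitioned into further 2-cells, and set $\alpha' := (g')_0$. Condition (2) places $(f')^{-1}(\mathrm{im}(g'))$ inside $\bigcup \mathrm{int}(B_j)$ and hence away from $T^{(1)}$, so $\alpha'(I) \cap f'(T^{(1)}) = \emptyset$. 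For $\sigma = B_j$, I choose $s_j^* \notin p \circ (g')^{-1} \circ f'(B_j)$ via (3) and take $H_\sigma$ to be the slice of $g'$ on $I \times [0,s_j^*]$ reparameterized to $[0,1]$, so $(H_\sigma)_1 = (g')_{s_j^*}$ misses $f'(B_j)$; for other $\sigma$, $\mathrm{int}(\sigma) \cap \bigcup \mathrm{int}(B_j) = \emptyset$ forces $f'(\sigma) \cap \mathrm{im}(g') = \emptyset$, and $H_\sigma$ is the constant homotopy. Each $H_\sigma$ lies in $\mathrm{im}(g') \subset X - f'(T^{(1)})$ and is an $\epsilon$-homotopy because $g'$ is $\delta$-close to the constant $g$.

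For the direction P-DADP $\Rightarrow$ $\delta$-FMP, given a path homotopy $f \colon D \times I \to X$, a constant homotopy $g(y,s) = \alpha(y)$, and $\delta > 0$, I apply the P-DADP to $f$ (viewed as $D^2 \to X$) and $\alpha$ with small $\epsilon$ and (crucially) with cell complex $T$ of mesh $< \delta$. This produces $f'$, $\alpha'$, and for each $\sigma \in T^{(2)}$ an $\epsilon$-homotopy $H_\sigma \colon I \times [0,1] \to X - f'(T^{(1)})$ from $\alpha'$ to $\alpha''_\sigma \colon I \to X - f'(\sigma)$. I define $g' := H_{\sigma_1} \cdot \bar{H}_{\sigma_1} \cdot H_{\sigma_2} \cdot \bar{H}_{\sigma_2} \cdots H_{\sigma_m} \cdot \bar{H}_{\sigma_m}$, where $\bar{H}_\sigma$ denotes the reverse of $H_\sigma$; then $g'$ is close to $g$ and $\mathrm{im}(g') \subset X - f'(T^{(1)})$. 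Consequently $(f')^{-1}(\mathrm{im}(g'))$ is a compact subset of $\bigcup_\sigma \mathrm{int}(\sigma)$ at positive distance from $T^{(1)}$; taking $B_\sigma$ to be the 2-cell $\sigma$ slightly shrunken toward its interior yields pairwise disjoint balls of diameter still $< \delta$ whose interiors cover $(f')^{-1}(\mathrm{im}(g'))$, giving (1) and (2). Condition (3) holds since at the $s$-value $s_\sigma^*$ marking the endpoint of the $H_\sigma$ segment of $g'$, $(g')_{s_\sigma^*} = \alpha''_\sigma$ is disjoint from $f'(\sigma) \supset f'(B_\sigma)$.

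The main obstacle is that the P-DADP does not explicitly guarantee fine mesh control on the cell complex $T$, whereas the modified $\delta$-fracture requires balls of diameter $< \delta$. I plan to handle this with an auxiliary mesh-refinement lemma in the spirit of Lemma~\ref{L1}: any output $(T_0, f'_0, \alpha'_0)$ of the P-DADP can be upgraded to one with a cell complex of arbitrarily fine mesh by applying the P-DADP separately to $f'_0|_{\sigma_0}$ on each 2-cell $\sigma_0 \in T_0^{(2)}$ and patching the results via the map extension theorem for ANRs. The pushing homotopies for sub-cells $\sigma \subset \sigma_0$ are inherited from those for $\sigma_0$ since $f(\sigma) \subset f(\sigma_0)$. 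Once this lemma is in hand, the forward direction proceeds as above.
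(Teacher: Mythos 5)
Your forward direction ($\delta$-FMP $\Rightarrow$ P-DADP) is correct and matches the paper's intent: it is the reverse direction of the proof of Theorem~\ref{pre main}, specialized to $L = I$. Your reverse direction is where the real content lies, and you have correctly located the crux, which the paper's one-line proof (``the extra $\delta$-control \ldots is obtained by choosing $T$ with small mesh'') glosses over: the P-DADP as literally stated only produces \emph{some} cell complex $T$, with no control on $\operatorname{mesh}(T)$.

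The difficulty is that your proposed mesh-refinement lemma does not close this gap. Applying the P-DADP to $f_0'|_{\sigma_0}$ on a $2$-cell $\sigma_0 \in T_0^{(2)}$ again produces \emph{some} cell complex of $\sigma_0$, and nothing prevents that output from being the trivial one-cell complex $\{\sigma_0\}$; iterating the construction therefore need not converge to small mesh. The alternative reading of your ``inherited homotopies'' remark --- subdivide $T_0$ into small cells and reuse $H_{\sigma_0}$ for the sub-cells --- fails for a different reason: the new edges of the refinement are not guaranteed to be missed by $\alpha'$ or by the $H_{\sigma}$'s, which is required by the definition of the P-DADP, and while one might try to bypass this because the $\delta$-FMP does not itself refer to a cell complex, the essential requirement of the modified $\delta$-fracture condition is that $(f')^{-1}(\operatorname{im}(g'))$ be covered by the interiors of \emph{pairwise disjoint} balls of diameter $< \delta$. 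A compact set can be covered by disjoint open sets of diameter $< \delta$ only if every connected component of that set has diameter $< \delta$, and nothing in the P-DADP (or in your lemma) forces $(f')^{-1}(\operatorname{im}(g'))$ to have small components; indeed, in the ghastly examples this set can a priori contain open pieces. In the other places where the paper derives the P-DADP --- from P2MP, the crinkled ribbons properties, or the closed $0$-stitched disks property --- the construction of $T$ explicitly produces small mesh (e.g.\ via a grid or a fine triangulation $K_m$), so the issue is invisible there; but for an abstract P-DADP space that side information is not available, and a genuine argument for the small-mesh (or equivalently, small-component) claim is needed and is absent from both the paper's sketch and your proposal.
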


\begin{proof}
The proof is exactly as that above, with $I$ playing the role of the $1$-complex $L$, except that the extra $\delta$-control needed for the forward direction is obtained by choosing $T$ with small mesh.
\end{proof}

We now note that the main result (Theorem \ref{main thm}) follows as a corollary to Corollary \ref{Mod FM thm} and Theorem \ref{FM}.

\medskip \noindent {\bf Theorem 1.1.} \emph{If $X$ is a resolvable generalized $n$-manifold that satisfies the P-DADP, then $X \times \mathbb{R}$ is an $(n+1)$-manifold.} \medskip

\section{Further Relationships}

In this section we demonstrate further relationships of the P-DADP and P-DADP* properties with other properties, as well as relationships amongst other properties that have not been previously addressed.

\begin{thm} \label{P2MP}
Every resolvable generalized manifold of dimension $n \geq 4$ with the P2MP has also the P-DADP.
\end{thm}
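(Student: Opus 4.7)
The plan is to use the 2-manifolds provided by the P2MP as a ``stage'' on which the approximating arc $\alpha'$ lives and inside which all the required push-off homotopies are carried out. Given $f: D^2\to X$, $\alpha: I\to X$, and $\epsilon>0$, I first apply the P2MP to obtain an approximation $\alpha': I\to N\subset X$ of $\alpha$ with $\rho(\alpha,\alpha')<\epsilon/4$, where $N$ is a 2-manifold topologically embedded in $X$. Next I choose a parameter $\delta>0$ small enough that any subset of $N$ of diameter less than $2\delta$ can be circumnavigated by a path in $N$ via a detour of diameter less than $\epsilon$, and then I select a cell complex $T$ of $D^2$ whose mesh is so fine that $diam(f(\sigma))<\delta/2$ for every 2-cell $\sigma\in T^{(2)}$.

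The key general position step is then to approximate $f$ by $f'$ with $\rho(f,f')<\delta/2$ so that (i) $f'(T^{(1)})\cap N=\emptyset$, and (ii) $N\cap f'(\sigma)$ is closed with empty interior in $N$ for every $\sigma\in T^{(2)}$. Both are general position requirements involving the 2-manifold $N$ and, respectively, a 1-complex image or a 2-cell image in the ambient space $X$; in a resolvable generalized $n$-manifold of dimension $n\geq 4$ they can be achieved using DAP together with the regularity of the embedding $N\hookrightarrow X$, in the spirit of Halverson's original argument in \cite{Halverson 1} that the P2MP implies the DHP. Condition (i) gives $\alpha'(I)\subset N\subset X-f'(T^{(1)})$, which is the first requirement in the P-DADP.

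For each $\sigma\in T^{(2)}$, I construct the homotopy $H_\sigma$ entirely inside $N$. Since $N\cap f'(\sigma)$ is a closed nowhere-dense subset of the 2-manifold $N$ of diameter less than $\delta$, there is an $\epsilon$-homotopy $H_\sigma: I\times [0,1]\to N$ pushing $\alpha'$ off this thin set by the standard 2-manifold detour argument (locally, replace each subarc of $\alpha'$ lying in $N\cap f'(\sigma)$ by a nearby detour through $N-f'(\sigma)$). The resulting map $\alpha''=(H_\sigma)_1$ satisfies $\alpha''(I)\subset N-f'(\sigma)$; because $H_\sigma$ is supported in $N$ and $N\cap f'(T^{(1)})=\emptyset$, the homotopy automatically lands in $X-f'(T^{(1)})$, as the P-DADP requires.

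The hard part is the general position step producing $f'$ satisfying (i) and (ii). In a true manifold of dimension $n\geq 4$ both conditions would follow immediately from standard general position, but in a resolvable generalized $n$-manifold they must be derived from the structure of the embedding $N\hookrightarrow X$ together with the DAP and the ANR extension tools recalled in Section 2. This is the analogue, in the P-DADP setting, of the general position step at the heart of the proof in \cite{Halverson 1} that the P2MP implies the DHP, and it is what constrains the argument to dimension $n\geq 4$.
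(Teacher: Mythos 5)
Your plan differs substantially from the paper's argument, and as written it has two genuine gaps. The paper's proof (of Theorem \ref{P2MP}) first uses Lemma \ref{P2MP lem} to replace $\alpha'$ by an \emph{embedded} arc in the interior of $N$, then exploits the tameness of embedded arcs in $2$-manifolds to produce a collar isotopy $g:I\times I\to N$ with $g_0=\alpha'$ and $g_0(I)\cap g_1(I)=\emptyset$. With $\delta=dist(g_0(I),g_1(I))>0$, one chooses the mesh of $T$ so small that $diam(f'(\sigma))<\delta$; then each $f'(\sigma)$ meets at most one of the two parallel arcs, and $H_\sigma$ is either the constant homotopy or the collar isotopy itself. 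The only general position used on $f$ is pushing a \emph{countable} dense subset of $D^2$ off $N$ (so that $f'(T^{(1)})\cap N=\emptyset$), which is exactly what the $0$-LCC embedding of $N$ provides.

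Your plan tries instead to push $\alpha'$ directly off the set $N\cap f'(\sigma)$ via a ``detour in $N$.'' This is where the gaps are. First, your condition (ii), that $N\cap f'(\sigma)$ has empty interior in $N$, does not follow from the available general position. The $0$-LCC property of $N$ lets one move a countable dense subset of the \emph{domain} $D^2$ off $N$; it does not let one arrange that the image of a whole $2$-cell misses a dense subset of $N$. Achieving (ii) would essentially require pushing a singular disk off points of $X$, i.e.\ some form of $(0,2)$-DDP, which is not available in a general resolvable generalized manifold (and is precisely what fails in the ghastly examples). Second, even granting (ii), a path in a $2$-manifold cannot always be pushed off an arbitrary closed, nowhere-dense set $Z$ of small diameter by a controlled homotopy in $N-Z$: $Z$ can locally separate $N$ in complicated ways (circles, comb-like sets, etc.), so the ``nearby detour'' you invoke need not exist without a more careful argument, and your opening claim that every small subset of $N$ ``can be circumnavigated'' is not justified. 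The paper's collar argument sidesteps both issues entirely: there is no need to push $\alpha'$ off the image of $f'$ at all, because the two parallel copies of the tame arc provide a pre-built dichotomy once $T$ is fine enough. You also omit the step of making $\alpha'$ embedded (Lemma \ref{P2MP lem}), which is what guarantees the collar exists.
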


\begin{proof}
Let $X$ be an ANR of dimension $n \geq 4$ with the plentiful $2$-manifolds property.  Let $\alpha: I \to X$ and $f:D^2 \to X$.

Let $\alpha':I \to N \subset X$ be an approximation of $\alpha$ where $N$ is a $2$-manifold containing the image of $\alpha'$ in its interior.  By Lemma \ref{P2MP lem}, for which we will provide the proof below, we may assume without loss generality that $\alpha'$ is embedded in $N$.  Since any embedded arc in a $2$-manifold is tame, the image of $\alpha'$ can be collared in $N$ thereby providing an $\epsilon$-isotopy $g: I \times I \to N$ so that $g_0 = \alpha'$ and $g_0(I) \cap g_1(I) = \emptyset$.

Since $dim(N)=2$, it follows that $N$ is $0$-LCC embedded in $X$ (see Corollary 26.2A in \cite{Daverman book}). Thus we may approximate $f$ by  $f':D^2 \to X$ so that $f'(\mathbb{Q}^* \times \mathbb{Q}^*) \cap N = \emptyset$, where the $\mathbb{Q}^* = \mathbb{Q} \cap I$.

Let $T$ be the cell complex of $D^2$ so that $$T^{(2)} = \left\{ \left[\frac{i-1}{2^m}, \frac{i}{2^m} \right] \times \left[\frac{j-1}{2^m}, \frac{j}{2^m} \right] \ | \ 1\leq i,j \leq 2^m \right\}$$ where $m$ is sufficiently large so that if $\sigma \in T^{(2)}$, then $diam(f'(\sigma))<\delta= dist(\left(g_0(I), g_1(I)\right)$.    Hence for each $\sigma \in T^{(2)}$, the image $f'(\sigma)$ can meet only one of $g_0(I)$ or $g_1(I)$.  If $f'(\sigma)$ misses $g_0(I)$, then let $H_{\sigma}$ be the constant homotopy on $\alpha'=g_0$.  If $f'(\sigma)$ meets $g_0(I)$, then let $H_{\sigma}$ be the realization of $g$.

The cell complex $T$ and the maps $f'$ and $\alpha'$ together with the homotopies $H_\sigma$ demonstrate that $X$ has the P-DADP.
\end{proof}

\begin{lem} \label{P2MP lem}
If $X$ is a resolvable generalized manifold with the DAP and the P2MP and $\alpha: I \to X$, then $\alpha$ can be approximated arbitrarily closely by an embedding $\alpha': I \to int(N) \subset N \subset X$, where $N$ is a $2$-manifold.
\end{lem}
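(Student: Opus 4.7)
The plan is to use the P2MP to push $\alpha$ into a $2$-manifold inside $X$ and then apply standard $2$-manifold techniques to produce an embedded arc in the interior of a $2$-manifold.

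First, I would apply the P2MP to obtain a close approximation $\alpha_1: I \to N_1 \subset X$, where $N_1$ is a $2$-manifold embedded in $X$. Next, I would approximate $\alpha_1$ by an embedded arc $\alpha_2: I \to N_1$: because $N_1$ is a PL $2$-manifold, after a PL approximation the self-intersections of $\alpha_1$ are finitely many isolated transverse crossings of the image, each of which can be resolved by a small local perturbation inside a coordinate disk of $N_1$. An alternative route, using the DAP hypothesis on $X$ directly, is to subdivide $I$ into a fine dyadic partition, apply DAP in $X$ to pairs of non-adjacent restrictions to make their images disjoint, and invoke the Homotopy Extension Theorem \ref{HET} to preserve continuity at the shared endpoints while forcing successive approximations to stay inside the ANR $N_1$; iterating and passing to a limit yields an embedded arc in $N_1$. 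Then I would use a collar of $\partial N_1$ in $N_1$ to push $\alpha_2(I)$ off the boundary, producing an embedded path $\alpha': I \to int(N_1)$.

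Finally, I would choose $N$ to be a $2$-disk in $int(N_1)$ containing $\alpha'(I)$ in its interior, obtained by extending $\alpha'$ slightly past each endpoint inside $int(N_1)$ and taking a small regular neighborhood of the extended arc. Then $N$ is a $2$-manifold with $\alpha'(I) \subset int(N) \subset N \subset X$, as required. The main obstacle I anticipate is the embedding step: approximating a path by an embedding while simultaneously keeping the approximation inside the fixed $2$-manifold $N_1$, since $2$-manifolds do not themselves satisfy DAP. This is handled by exploiting the PL structure of $N_1$ for local resolution of self-intersections, together with the ANR map-extension properties (Theorem \ref{HET} and Corollary \ref{MET}) to ensure that successive global approximations can be kept inside $N_1 \subset X$.
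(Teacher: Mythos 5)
Your overall plan (push into a $2$-manifold via P2MP, make the image embedded inside that $2$-manifold, pass to the interior) matches the shape of the paper's argument, but the central step---producing an embedded arc inside $N_1$---has a genuine gap, and both of the routes you sketch for it fail.

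Your first route claims that a transverse self-crossing of a PL arc in a $2$-manifold ``can be resolved by a small local perturbation inside a coordinate disk.'' This is false in general: transverse crossings of an arc in a surface are stable under small perturbations. That is precisely why $2$-manifolds do not satisfy the DAP, and it is the whole difficulty the lemma has to overcome. A crossing can only be removed by a small perturbation if the two parameter values $t<t'$ producing the crossing are close, so that the loop $\alpha'([t,t'])$ is small and can be ``cut out''; for an arbitrary path pushed into $N_1$ by the P2MP there is no reason this should be the case. Your second route (subdivide $I$, apply DAP in $X$ to non-adjacent pieces, and use the Homotopy Extension Theorem to ``stay inside $N_1$'') does not work either: applying DAP in $X$ produces an approximation that a priori leaves $N_1$, and using the ANR extension property of $N_1$ to push it back into $N_1$ reintroduces exactly the crossings DAP removed, again because crossings in a surface are stable.

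The missing idea is the \emph{order} in which the hypotheses are used together with a uniform-continuity estimate. The paper first uses DAP in $X$ to make the original map $\alpha: I \to X$ itself an embedding (this is where the extra ambient dimension is exploited). Only then does it apply P2MP to get a nearby PL general-position map $\alpha'$ into a $2$-manifold $N$. Because $\alpha$ is an embedding, $\alpha^{-1}$ is uniformly continuous; so whenever $\alpha'(t)=\alpha'(t')$, the points $\alpha(t)$ and $\alpha(t')$ are within $2\gamma$ of each other, forcing $|t-t'|$ to be small, and hence the loop $\alpha'([t,t'])$ to have small diameter. The self-intersections therefore occur only on ``tiny'' parameter intervals, and the proof removes them not by perturbing in a coordinate chart but by collapsing these small subintervals to points (and then reparametrizing), which stays inside $N$ and is an $\epsilon$-move. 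Without first promoting $\alpha$ to an embedding of $I$ in $X$, there is no such smallness control, and no small perturbation inside $N_1$ can eliminate the crossings.
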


\begin{proof}  Since $X$ has the DAP, we may assume without loss of generality that $\alpha$ is an embedding in $X$.  Let $\epsilon>0$ be given.  By the continuity of $\alpha$, there is a $\delta >0$ such that whenever $A \subset I$ and $diam(A)< \delta$, we have $diam(\alpha(A)) < \epsilon/3$.  By the continuity of $\alpha^{-1}$, there is a $\gamma $ such that $0 < \gamma < \epsilon/3$ and whenever $Z \subset X$ and $diam(Z)< 2\gamma$, we have $diam(\alpha^{-1}(A)) < \delta$.  Let $\alpha'$ be a $\gamma$-approximation of $\alpha$ such that $\alpha': I \to int(N) \subset N \subset X$, which is promised by the P2MP.  Without loss of generality we may assume that $\alpha'$ is a piecewise linear map in general position in $N$.  Hence any self intersection points come in pairs.  Let $\{(t_1,t_1'),(t_2,t_2'), \ldots, (t_r,t_r')\}$ denote the pairs of values in $I$ such that $\alpha'(t_i) = \alpha'(t_i')$, for every $i=1, \ldots, r$.  Without loss of generality we may assume that $t_i < t_i'$ and $t_1 < t_2 < \ldots < t_r$.  Let $\tau_1=t_1$ and $\tau_1'=t_1'$.  Suppose $\tau_i$ has been defined and $\{ t_j \ |  \ \tau_i < t_j\} \ne \emptyset$.  Let $\tau_{i+1} = \underset{j\in \{1,\ldots,r\}}{min} \{ t_j \ | \ \tau'_i < t_j\}$.  Let $\tau_i' = t_j'$ where $\tau_i = t_j$.

Now define $\alpha'': I \to int(N) \subset N \subset X$ such that $\alpha''([\tau_i,\tau_i']) = \alpha''(\tau_i)=\alpha'(\tau_i')$ and $\alpha'(t) = \alpha'(t)$ otherwise, i.e., $t$ is not contained in an interval $[\tau_i,\tau_i']$.

Note that since $\alpha'(\tau_i) = \alpha'(\tau_i')$ and $\alpha'$ is a $\gamma$-approximation of $\alpha$, it follows that $dist(\alpha(\tau_i), \alpha(\tau'_i)) < 2 \gamma$. Hence $diam([\tau_i,\tau_i']) < \delta$.  Thus, $diam(\alpha([\tau_i,\tau_i'])) < \epsilon/3$. Hence, $diam(\alpha'([\tau_i,\tau_i'])) < \epsilon/3 + 2 \gamma < \epsilon$. Thus $\alpha''$ is an $\epsilon$-approximation of $\alpha'$.

A slight reparametrization of $\alpha''$ gives the desired embedding.
\end{proof}

Note that Theorem \ref{P2MP} applies to many types of spaces, including spaces of dimension $n\geq4$ that arise as a nested defining sequence of thickened $(n-2)$-manifolds, including the totally wild flow and the $k$-ghastly spaces constructed in \cite{Cannon-Daverman2,Halverson 1}.

\begin{thm}\label{CRP thm}
Every resolvable generalized manifold that has the CRP has also the P-DADP*.
\end{thm}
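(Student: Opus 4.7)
The plan is to produce the data required by the P-DADP* directly from a ``crinkled ribbon'' delivered by the CRP. Given $f : D^2 \to X$, $\alpha : L \to X$, and $\epsilon>0$, I will first apply the CRP to the constant homotopy $\bar\alpha : L\times I \to X$, $\bar\alpha(x,t)=\alpha(x)$, with tolerance $\epsilon/3$. This yields a map $g : L\times I \to X$ which is $\epsilon/3$-close to $\bar\alpha$, has image of dimension at most $n-2$, and satisfies $g_0(L)\cap g_1(L)=\emptyset$. By the result of Borel cited in the remark following Definition \ref{TCRP def}, $g(L\times I)$ is then a closed subset of $X$ with empty interior that is $0$-LCC embedded in $X$. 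I set $\alpha':=g_0$ (automatically $\epsilon/3$-close to $\alpha$) and let $\delta := \mathrm{dist}(g_0(L), g_1(L)) > 0$.

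Next, I choose a cell complex $T$ of $D^2$ whose mesh is so small (using uniform continuity of $f$) that $\mathrm{diam}(f(\sigma)) < \delta/4$ for every $\sigma \in T$, and I approximate $f$ by $f' : D^2 \to X$ with $\rho(f,f') < \min\{\epsilon,\delta/4\}$ and $f'(T^{(1)}) \cap g(L \times I) = \emptyset$. The existence of such an $f'$ uses codimension-at-least-two general position in the resolvable generalized $n$-manifold $X$: because $g(L\times I)$ is a closed $0$-LCC set of dimension at most $n-2$, the $1$-complex map $f|_{T^{(1)}}$ can be approximated off $g(L\times I)$, and the resulting approximation extends globally to $f'$ via the map extension theorem. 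Since $\alpha'(L) = g_0(L) \subset g(L\times I)$, we automatically have $\alpha'(L) \cap f'(T^{(1)}) = \emptyset$, and $g$ itself factors through $X - f'(T^{(1)})$.

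For each $2$-cell $\sigma \in T^{(2)}$, the diameter of $f'(\sigma)$ is less than $\delta$, so $f'(\sigma)$ meets at most one of the disjoint compacta $g_0(L)$, $g_1(L)$. I define $H_\sigma : L \times [0,1] \to X - f'(T^{(1)})$ by setting $H_\sigma := g$ when $f'(\sigma) \cap g_0(L) \neq \emptyset$, so that the terminal map $\alpha'' = g_1$ misses $f'(\sigma)$, and setting $H_\sigma$ to be the constant homotopy at $\alpha'$ otherwise, so that $\alpha'' = \alpha'$ misses $f'(\sigma)$. In both cases $H_\sigma(L\times I)\subset g(L\times I)\subset X-f'(T^{(1)})$, and since each level of $g$ is within $\epsilon/3$ of $\alpha$, $H_\sigma$ has tracks of diameter at most $2\epsilon/3 < \epsilon$. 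The tuple $(T, f', \alpha', \{H_\sigma\}_{\sigma\in T^{(2)}})$ then witnesses the P-DADP* for $(f, \alpha, \epsilon)$.

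The main obstacle is the general position step of the second paragraph: pushing the $1$-skeleton image $f(T^{(1)})$ off the $(n-2)$-dimensional closed $0$-LCC set $g(L\times I)$. This is a codimension-$\geq 2$ approximation statement in a resolvable generalized $n$-manifold that relies essentially on the hypothesis $n\geq 4$; it can be handled either by lifting through a resolution $M \to X$ and performing standard general position in the manifold $M$, or by invoking the $1$-LCC-type approximation theorems available for closed sets of codimension at least two in resolvable generalized manifolds. Everything else in the argument is a direct assembly of the CRP output and the disjointness of the ribbon's ends.
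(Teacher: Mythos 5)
Your proposal is correct and follows essentially the same route as the paper's proof, which also applies the CRP to the constant homotopy on $\alpha$, uses the Borel/Daverman result to conclude that the ribbon's image $g(L\times I)$ is a closed, $0$-LCC, empty-interior set, pushes $f$ off that set along a fine grid's $1$-skeleton, and then uses the ribbon $g$ itself to supply the homotopies $H_\sigma$ (constant or a realization of $g$, depending on which end of the ribbon $f'(\sigma)$ meets). Your rendering is in fact slightly more explicit than the paper's (which is stated as a modification of the P2MP argument) in that you directly require $f'(T^{(1)})\cap g(L\times I)=\emptyset$ and observe that this $1$-complex avoidance follows from $0$-LCC-ness of the codimension-$\geq 2$ set, which is exactly the right justification.
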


\begin{proof}  The argument is very similar to that in the proof of Theorem \ref{P2MP}. We again let $g:K \times I \to X$ be the constant homotopy on $\alpha:K \to X$, where $K$ is a $1$-complex.  By applying the crinkled ribbons property we obtain an approximation $g'$ such that $g'_0(K) \cap g'_1(K) = \emptyset$ and $dim(g'(K \times I))\leq n-2$.  The point set $g'(K \times I)$ now takes on the role of $N$ in the proof of Theorem \ref{P2MP} in terms of approximating $f$.  In particular, Corollary 26.2A of \cite{Daverman book} also implies that $g'(D \times I)$ is $0$-LCC in $X$ so we can find an approximation $f'$ of $f$ such that $f'(\mathbb{Q}^* \times \mathbb{Q}^*) \cap g'(D\times I) = \emptyset$. \end{proof}

\begin{thm}\label{TCRP thm}
Every resolvable generalized manifold that has $1$-LCC embedded points and the CRP-T has also the P-DADP.
\end{thm}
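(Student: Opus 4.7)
The proof adapts that of Theorem \ref{CRP thm} (CRP implies P-DADP*), specialized to arcs and with the $1$-LCC hypothesis on points handling the finite intersection set that CRP-T (unlike CRP) permits. Given $\alpha: I \to X$, $f: D^2 \to X$, and $\epsilon > 0$, first apply CRP-T to the constant homotopy $g: I \times I \to X$ on $\alpha$, obtaining an approximation $g': I \times I \to X$ such that $\alpha' := g'_0$ is close to $\alpha$, $P := g'_0(I) \cap g'_1(I) = \{p_1, \ldots, p_k\}$ is finite, and $Y := g'(I \times I)$ has $dim(Y) \leq n-2$. By the Remark following Definition \ref{TCRP def}, $Y$ is $0$-LCC embedded in $X$ with empty interior.

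Next, invoke the $1$-LCC condition at each $p_i$ to approximate $f$ by $f_1: D^2 \to X$ with $f_1(D^2) \cap P = \emptyset$: for each $p_i$, choose a small neighborhood $A$ of $f^{-1}(p_i)$ in $D^2$ whose components are topological disks and with $f(A)$ contained in a $1$-LCC neighborhood $V$ of $p_i$; $f$ restricted to each boundary circle of $A$ is then a loop in $V-\{p_i\}$ that bounds, by $1$-LCC, a disk in a larger neighborhood $U-\{p_i\}$, and componentwise replacement of $f|_A$ by these disks pushes $f$ off $p_i$. Iterating over $i=1,\ldots,k$ with each $U$ of small diameter yields $f_1$. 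By compactness, $d := dist(f_1(D^2), P) > 0$; set $N := \bigcup_i B(p_i, d/2)$, so $f_1(D^2) \cap N = \emptyset$, and the compact sets $\alpha'(I) - N$ and $g'_1(I) - N$---disjoint, since their common points lie in $P \subset N$---are at a positive distance $\delta > 0$.

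Choose a grid cell complex $T$ of $D^2$ of mesh small enough that $diam(f_1(\sigma)) < \delta$ for every $\sigma \in T^{(2)}$, and, using the $0$-LCC of $Y$, approximate $f_1$ by $f': D^2 \to X$ so that $f'(T^{(1)}) \cap Y = \emptyset$ while preserving $f'(D^2) \cap N = \emptyset$ and $diam(f'(\sigma)) < \delta$. For each $\sigma \in T^{(2)}$, since $f'(\sigma) \subset X - N$ has diameter less than $\delta$, it meets at most one of $\alpha'(I)$ or $g'_1(I)$. Accordingly, let $H_\sigma$ be the constant homotopy at $\alpha'$ when $f'(\sigma)\cap\alpha'(I)=\emptyset$, and a reparameterization of $g'$ otherwise. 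In either case $H_\sigma$ has image in $Y$, hence misses $f'(T^{(1)})$, while the endpoint map $\alpha''$ avoids $f'(\sigma)$; thus $T$, $f'$, $\alpha'$, and $\{H_\sigma\}$ witness the P-DADP.

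The crucial step is the $1$-LCC push-off in the second paragraph: without the $1$-LCC hypothesis on points we could not guarantee $f(D^2) \cap P = \emptyset$, and the three sets $f(D^2)$, $\alpha'(I)$, and $g'_1(I)$ could accumulate on a common point of $P$, collapsing the separation $\delta > 0$ and defeating the diameter argument. All other size controls---the $\epsilon$-closeness of $\alpha'$, $f'$, and the $H_\sigma$ to their originals, the positivity of $d$ and $\delta$, and the non-interference of the successive approximations of $f$---follow routinely from compactness, continuity, and the freedom to shrink the $1$-LCC neighborhoods arbitrarily.
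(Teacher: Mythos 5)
Your proof is correct and follows essentially the same route as the paper's: apply CRP-T to the constant homotopy to get a finite intersection set $P$, use the $1$-LCC hypothesis to push $f$ off $P$, replace the role of the paper's closed neighborhood $W$ with your $N$, and take $\delta$ to be the distance between the portions of $g'_0(I)$ and $g'_1(I)$ outside that neighborhood before running the small-mesh grid and case analysis from the CRP/P2MP arguments. You spell out the $1$-LCC push-off in more detail than the paper (which merely says ``specify that the image of $f$ misses these points''), but the underlying strategy is identical.
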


\begin{proof}  The argument is almost identical to that of the proof of Theorem \ref{CRP thm}, except there are a finite number of points of intersection of $g_0$ and $g_1$.  Specify that the image of $f$ misses these points.  Even more, let $W$ be a closed neighborhood of these points that misses the image of $f$.  Specify that the image of the approximation $f'$ of $f$ also misses $W$.  The only other modification is to let  $\delta$ be the distance between $\overline{g'_0(I)-W}$ and $\overline{g'_1(I)-W}$.  Then proceed as before.  \end{proof}

Since the $1$-LCC condition implies the $(0,2)$-DDP, the following result is a corollary of Theorems \ref{P-DADP equiv} and \ref{TCRP thm}.

\begin{cor}\label{TCRP cor}
Every resolvable generalized manifold that has $1$-LCC embedded points and the CRP-T has also the P-DADP*.
\end{cor}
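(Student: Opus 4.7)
The plan is simply to chain together Theorem \ref{P-DADP equiv} and Theorem \ref{TCRP thm}, with one small intermediate observation supplying the needed hypothesis. First I would invoke Theorem \ref{TCRP thm} directly: since $X$ is a resolvable generalized manifold with $1$-LCC embedded points and the CRP-T, that theorem yields the P-DADP for $X$. This is the substantive half of the argument and is already established.

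Next I would verify that the $1$-LCC hypothesis on points of $X$ automatically delivers the $(0,2)$-DDP, which is the hypothesis required by Theorem \ref{P-DADP equiv}. Given $f:D^2\to X$ and a point $x\in X$, the idea is to triangulate $D^2$ so finely that every $2$-simplex $\sigma$ has image lying inside an $1$-LCC neighborhood pair $V\subset U$ about $x$ provided by the definition; then, working simplex by simplex and using the $(0,1)$-DDP (which resolvable generalized manifolds of dimension $\geq 3$ automatically possess via the DAP) to first push the $1$-skeleton off $x$, we may extend each $f|_{\partial\sigma}:\partial\sigma\to V-\{x\}$ to a map $\sigma\to U-\{x\}$ by the $1$-LCC property, producing an arbitrarily close approximation of $f$ whose image misses $x$. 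Since $X$ is also path-connected (resolvable generalized manifolds are ANRs and a disconnected case is handled componentwise), the hypotheses of Theorem \ref{P-DADP equiv} are satisfied.

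Finally, I would apply Theorem \ref{P-DADP equiv} to conclude that $X$ has the P-DADP*, completing the proof. No new machinery is required beyond the two cited results and the standard $1$-LCC $\Rightarrow$ $(0,2)$-DDP passage. The only mildly nontrivial step is the intermediate observation about $(0,2)$-DDP; I expect this to be routine rather than the main obstacle, since it is a standard skeleton-by-skeleton general position adjustment in an ANR and is already used implicitly elsewhere in the paper.
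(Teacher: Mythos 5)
Your proposal follows exactly the route the paper takes: the paper states Corollary~\ref{TCRP cor} immediately after observing that ``the $1$-LCC condition implies the $(0,2)$-DDP'' and cites Theorems~\ref{P-DADP equiv} and~\ref{TCRP thm} as the two ingredients. The only difference is that you spell out the standard skeleton-by-skeleton argument for $1$-LCC $\Rightarrow$ $(0,2)$-DDP, which the paper takes as known.
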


Examples of spaces having the crinkled ribbons property are the locally spherical resolvable generalized $n$-manifolds, $n\geq 4$ (see \cite{HaRe2}).

\begin{thm} \label{DADP-CRP}
If $X$ is an ANR with the DADP, then $X$ has also the CRP.
\end{thm}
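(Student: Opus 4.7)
The plan is to approximate the given constant homotopy $f : K \times I \to X$ on $\alpha : K \to X$ by a map $f'$ satisfying CRP. The strategy is to first pry apart the top and bottom of the homotopy using the disjoint arcs property, then connect them by a small-tracks homotopy using the ANR structure, and finally thin the image of the connecting homotopy using iterated DADP so that Borel's result (cited in the Remark following Definition~\ref{TCRP def}) delivers the dimension bound $n-2$.

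A preliminary observation is that DADP implies DAP in any ANR: given two arcs $\alpha, \beta : I \to X$, pick a retraction $r : D^2 \to I$ with a section $c : I \to D^2$, and apply DADP to the arc $\alpha$ and the disk $\beta \circ r$; restricting the resulting disk approximation along $c$ recovers an approximation $\beta' = (\beta \circ r)' \circ c$ of $\beta$ whose image lies in the image of $(\beta \circ r)'$ and so is disjoint from that of $\alpha'$. Using DAP simplex by simplex on $K$, I would produce $\alpha_0, \alpha_1 : K \to X$, each $\epsilon/3$-close to $\alpha$, with $\alpha_0(K) \cap \alpha_1(K) = \emptyset$. The Homotopy Extension Theorem (Theorem~\ref{HET}) then gives a homotopy $\tilde f : K \times I \to X$ with $\tilde f_0 = \alpha_0$, $\tilde f_1 = \alpha_1$, and tracks of diameter less than $\epsilon/2$, yielding an $\epsilon/2$-approximation of $f$ satisfying CRP condition (1).

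For CRP condition (2), I would perturb the interior of $\tilde f$, keeping the ends fixed via the Homotopy Extension Theorem, to obtain $f'$ whose image is nowhere dense and $0$-LCC in $X$. Nowhere density follows from iterated use of DADP (with constant arcs playing the role of points, as an arc can be chosen to pass through any prescribed point) to approximate $\tilde f$ to miss a countable dense subset of $X$, combined with the Map Extension Theorem (Corollary~\ref{MET}) and summable perturbation sizes to guarantee convergence. The $0$-LCC condition follows from the fact that $\mathrm{im}(f')$ is a finite union of singular disks $f'(\kappa_i \times I)$: any short arc joining two points near the image can be pushed off each of these disks in turn using DADP, while remaining in a controlled neighborhood. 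Since $X$ is a cohomological $n$-manifold, the Borel result in the Remark following Definition~\ref{TCRP def} then yields $\dim(\mathrm{im}(f')) \leq n-2$. The main obstacle is the simultaneous bookkeeping in this last stage: the perturbations must achieve $\epsilon$-closeness, preserve the disjointness $\alpha_0(K) \cap \alpha_1(K) = \emptyset$, and produce an image that is both nowhere dense and $0$-LCC, all at once; this is handled by a standard geometrically decreasing perturbation scheme of the kind used throughout the paper.
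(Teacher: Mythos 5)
Your overall route matches the paper's: get disjoint ends via DAP (derived from DADP), then thin the interior using DADP, then invoke Borel's characterization to get $\dim \leq n-2$. The small detour through the Homotopy Extension Theorem to reconnect $\alpha_0$ and $\alpha_1$ is harmless; the paper simply asserts an approximation $f'$ with disjoint ends. Your derivation of DAP from DADP via a retraction $r: D^2 \to I$ and section $c$ is fine, and your nowhere-density argument (missing a union of small arcs placed near a countable dense set of points) is sound up to the usual convergence bookkeeping.

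The genuine gap is in the $0$-LCC step. You propose to take a short arc $\gamma$ joining two points near $\mathrm{im}(f')$ and ``push it off each of the disks $f'(\kappa_i \times I)$ in turn using DADP.'' But DADP perturbs \emph{both} inputs: applying it to $\gamma$ and $f'|_{\kappa_i \times I}$ produces an arc disjoint from a \emph{perturbed} disk, not from the already-fixed image $\mathrm{im}(f')$ that you need to avoid. Since $0$-LCC is a property of a fixed set, this application of DADP does not verify it. The paper resolves this by reversing which object is perturbed: one fixes in advance a countable dense collection of arcs in $X$ and approximates $f'$ by $f''$ whose image misses (small perturbations of) all of them; it is then $f''(K\times I)$ that one checks, and the missed arcs serve simultaneously as the witnesses for both empty interior and $0$-LCC. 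So the distinction ``perturb the disk off the arcs'' versus ``perturb the arcs off the disk'' is exactly the point your proof glosses over, and the fix requires restructuring the last stage so the arcs are selected first and held essentially fixed throughout the iteration.
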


\begin{proof}
Let $f: K \times I \to X$, where $K$ is a $1$-complex, be a constant homotopy. Since $X$ has the DADP, it has also the DAP.  Thus, we can find an approximation $f'$ of $f$ so that  $f'(K \times \{0\}) \cap f'(K \times \{1\}) = \emptyset$. Applying the DADP we can then approximate $f'$ by a map $f''$ which misses the image of a countable dense collection of arcs in $X$.    The approximation should be sufficiently small to ensure $f''(K \times \{0\}) \cap f''(K \times \{1\}) = \emptyset$. Then $f''(K \times I)$ is $0$-LCC embedded in $X$ with empty interior. It follows that $dim(f''(K \times I))\leq n-2$  (see Proposition 4.9 in \cite{Borel}).  Hence $f''$ is the desired approximation of $f$.  Therefore $X$ has the CRP.
\end{proof}

\begin{thm} \label{DADP-TCRP}
If $X$ is an ANR with the DADP, then $X$ has also the CRP-T.
\end{thm}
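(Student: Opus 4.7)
The plan is to observe that Theorem \ref{DADP-CRP} essentially already delivers the conclusion, since the unit interval $D$ is itself a $1$-complex and the CRP condition with $K=D$ is strictly stronger than the CRP-T condition. So the argument either invokes Theorem \ref{DADP-CRP} directly or repeats its two-step structure verbatim.

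Concretely, given a constant homotopy $f\colon D\times I \to X$, I would first view $D$ as a $1$-complex and apply Theorem \ref{DADP-CRP} to produce an approximation $f'\colon D\times I \to X$ with $f'(D\times\{0\})\cap f'(D\times\{1\})=\emptyset$ and $\dim(f'(D\times I))\le n-2$. The empty set is in particular a finite set of points, so condition (1) of Definition \ref{TCRP def} is satisfied, and condition (2) is immediate.

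If one prefers an in-place argument, one can re-run the proof of Theorem \ref{DADP-CRP}: DADP implies the DAP, so $f$ may be approximated so that the two edge arcs $f'(D\times\{0\})$ and $f'(D\times\{1\})$ are disjoint; then, choosing a countable dense collection of arcs in $X$ and applying the DADP countably many times (with shrinking control), one obtains a further approximation $f''$ whose image meets none of these arcs. The resulting image is then $0$-LCC embedded in $X$ with empty interior, so Borel's theorem (Proposition 4.9 of \cite{Borel}) yields $\dim(f''(D\times I))\le n-2$; taking the final approximation small enough preserves disjointness of the two edges, and in particular keeps their intersection finite (indeed empty).

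No step is a real obstacle: the only issue worth flagging is that CRP-T permits a finite nonempty intersection of the boundary edges while the DADP hypothesis actually lets us achieve the strictly stronger empty-intersection condition, so the relaxation in CRP-T never needs to be used. The statement is still worth recording because CRP-T is the hypothesis invoked downstream in Theorem \ref{TCRP thm} and Corollary \ref{TCRP cor}.
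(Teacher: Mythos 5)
Your argument is correct and matches the paper's proof exactly: the paper also deduces CRP-T by citing Theorem \ref{DADP-CRP} and noting that CRP trivially implies CRP-T (the empty intersection required by CRP being in particular a finite set). The in-place rerun you sketch is just an unrolling of that same chain and adds nothing new.
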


\begin{proof}
By Theorem \ref{DADP-CRP}, the DADP implies the CRP.  The fact that the CRP implies the CRP-T immediately follows from the definitions.
\end{proof}

\begin{thm} \label{DADP-0SD}
If $X$ is an ANR with the DADP, then $X$ has also the closed $0$-stitched disks property.
\end{thm}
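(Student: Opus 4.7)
The plan is to build $f'$ and $g'$ as uniform limits of approximations $f_i, g_i$ obtained by iteratively applying DADP to push the image of the $1$-skeleton of a successively refined triangulation of $D^2$ off the opposite disk image. The complements of those infinite $1$-skeleta will be totally disconnected, hence $0$-dimensional, and will contain the closed sets $A, B$ witnessing the closed $0$-stitched disks property. As a preliminary step, I note that DADP in an ANR yields DAP: given arcs $\alpha, \beta \colon I \to X$, extend $\beta$ to $F := \beta \circ r \colon D^2 \to X$ via a retraction $r \colon D^2 \to I$, apply DADP to $\alpha, F$, and set $\beta' := F'|_I$; iterating one $1$-simplex at a time yields, more generally, the ability to push any finite $1$-complex off a given disk image and to approximate maps of $1$-complexes by embeddings.

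I begin by using DAP on the boundary circles to arrange $f(\partial D^2) \cap g(D^2) = \emptyset$ and $g(\partial D^2) \cap f(D^2) = \emptyset$, ensuring the forthcoming preimage sets $A, B$ lie in $\operatorname{int}(D^2)$. Next I construct inductively nested triangulations $K_1 < K_2 < \cdots$ and $L_1 < L_2 < \cdots$ of $D^2$ with meshes at most $2^{-i}$ at stage $i$, together with maps $f_i, g_i$ satisfying $d(f_i, f_{i-1}), d(g_i, g_{i-1}) < \eta_i \le \epsilon / 2^{i+1}$, as well as $f_i((K_1 \cup \cdots \cup K_i)^{(1)}) \cap g_i(D^2) = \emptyset$, $g_i((L_1 \cup \cdots \cup L_i)^{(1)}) \cap f_i(D^2) = \emptyset$, and $f_i|_{K_i^{(1)}} \cup g_i|_{L_i^{(1)}}$ injective. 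Each $\eta_i$ is further constrained to be less than half the minimum (over $j \le i-1$) of the positive distances $\operatorname{dist}(f_{i-1}(K_j^{(1)}), g_{i-1}(D^2))$ and $\operatorname{dist}(g_{i-1}(L_j^{(1)}), f_{i-1}(D^2))$, so that earlier disjointness is preserved. The step from $i-1$ to $i$ consists of refining the triangulations, pushing $f_{i-1}$ off $g_{i-1}$ along $K_i^{(1)}$ by iterated DADP (one arc at a time, with the budget split among the finitely many simplices), symmetrically pushing $g_{i-1}$ off the new $f_i$ on $L_i^{(1)}$, and finally applying a small DAP-based perturbation of pairs of simplices to secure injectivity.

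The Cauchy sequences $f_i, g_i$ converge uniformly to $f', g'$ with $d(f', f), d(g', g) < \epsilon$. Set $A := (f')^{-1}(g'(D^2))$ and $B := (g')^{-1}(f'(D^2))$; both are closed as preimages of compact sets. The disjointness conditions pass to the limit, so $f'((K_1^\infty)^{(1)}) \cap g'(D^2) = \emptyset$, giving $A \subseteq D^2 \setminus (K_1^\infty)^{(1)}$. Because $\operatorname{mesh}(K_i) \to 0$, any two distinct points of $D^2 \setminus (K_1^\infty)^{(1)}$ lie in different $2$-simplices of $K_i^{(1)}$ for large $i$, so this complement is totally disconnected, and, being a separable metric space, it is $0$-dimensional; hence $A$ is a closed $0$-dimensional subset of $\operatorname{int}(D^2)$. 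The analogous conclusion holds for $B$. Finally $f'(D^2 \setminus A) \cap g'(D^2 \setminus B) \subseteq f'(D^2 \setminus A) \cap g'(D^2) = \emptyset$, and $f'|_{(K_1^\infty)^{(1)}} \cup g'|_{(L_1^\infty)^{(1)}}$ is $1$-$1$ from the injectivity at each finite stage, verifying the closed $0$-stitched disks property.

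The main obstacle is the inductive bookkeeping: the perturbation budget $\eta_i$ at each stage must simultaneously accommodate the iterated DADP across all finitely many $1$-simplices of $K_i^{(1)}$ and $L_i^{(1)}$, be small enough to preserve each of the finitely many previously established disjointness margins, and contribute only $\epsilon/2^{i+1}$ to the total perturbation. Dovetailing these constraints sequentially is routine but delicate; one must also verify that the finite iterated DADP can be carried out within any prescribed budget, which follows from the fact that DADP provides arbitrarily close approximations together with the uniform continuity of the maps in play.
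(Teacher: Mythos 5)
Your proof is essentially the same as the paper's: iterate DADP to produce approximations $f'$, $g'$ with each infinite $1$-skeleton pushed off the opposite disk image, set $A=(f')^{-1}(g'(D^2))$ and $B=(g')^{-1}(f'(D^2))$, and observe these are closed, $0$-dimensional, and $0$-stitch $f'$ and $g'$. The paper's proof is terser (it leaves the iterative/limit construction implicit) and uses a single sequence of triangulations for both maps rather than your two sequences $K_i$ and $L_i$; both choices are consistent with the definition. One small reasoning slip worth fixing: you invoke ``totally disconnected separable metric $\Rightarrow$ $0$-dimensional,'' which is false in general (e.g.\ Erd\H{o}s space); what you actually have is stronger -- the interiors of the $2$-simplices of the $K_i$ give a basis of relatively clopen sets in $D^2\setminus(K^\infty)^{(1)}$, or, more directly, $A$ and $B$ are \emph{compact} and totally disconnected, hence $0$-dimensional.
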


\begin{proof}
Let $f,g:D^2 \to I$.  Let $K_1<K_2<\ldots $ be a triangulation of $D^2$ such that $mesh(K_i) \to 0$.  Let $(K^\infty)^{(1)} = \bigcup K_i^{(1)}$.  Apply the DADP to get approximations $f'$ of $f$ and $g'$ of $g$ so that  $f'(K^\infty)^{(1)})\cap g'(D^2) = \emptyset$, $g'(K^\infty)^{(1)})\cap f'(D^2) = \emptyset$, and both $f'$ and $g'$ are 1-1 on $(K^\infty)^{(1)}$.  Let $A=(f')^{-1}(g'(D^2))$ and $B=(g')^{-1}(f'(D^2))$.  Note that $A$ and $B$ are closed $0$-dimensional sets contained in $D^2-(K^{\infty})^{(1)}$ and $f'(D^2-A) \cap g'(D^2-B) = \emptyset$.  In particular, $f'$ and $g'$ are $0$-stitched along closed sets
$A$ and $B$, and away from $(K^\infty)^{(1)}$ in $D^2$,  such that
$f'|_{(K^\infty)^{(1)}} \cup g'|_{(K^\infty)^{(1)}}$ is $1-1$.  Therefore $X$ as the closed $0$-stitched disks property.
\end{proof}

\begin{thm}\label{0SD-DFM}
Every ANR that has the closed $0$-stitched disks property has also the P-DADP.
\end{thm}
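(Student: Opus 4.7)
The plan is to apply the closed $0$-stitched disks property to $f$ together with the constant-in-$t$ thickening of $\alpha$, and read off the P-DADP data from the resulting stitched approximations. Given $f:D^2\to X$, $\alpha:I\to X$, and $\epsilon>0$, identify $D^2=I\times I$ and define $g:D^2\to X$ by $g(s,t)=\alpha(s)$. Apply the closed $0$-stitched disks property to $f,g$, arranging in its construction that $I\times\{1/2\}$ is a subcomplex of the initial triangulation $K_{2,1}$. This produces approximations $f',g'$ that are $0$-stitched along closed $0$-dimensional sets $A,B\subset\mathrm{int}(D^2)$ and away from infinite $1$-skeleta $(K_1^\infty)^{(1)},(K_2^\infty)^{(1)}$, with $f'|_{(K_1^\infty)^{(1)}}\cup g'|_{(K_2^\infty)^{(1)}}$ injective. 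Since $I\times\{1/2\}\subset(K_2^\infty)^{(1)}$ and $B\cap(K_2^\infty)^{(1)}=\emptyset$, compactness furnishes $\delta>0$ with $I\times[1/2-\delta,1/2+\delta]\subset D^2-B$.

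Next, take $T=K_{1,n}$ for $n$ large enough that $\mathrm{diam}(f'(\sigma))<\epsilon/3$ for every $\sigma\in T^{(2)}$; then $T^{(1)}\subset(K_1^\infty)^{(1)}\subset D^2-A$. Define $\alpha'(s)=g'(s,1/2)$. The $1$-$1$ condition gives $\alpha'(I)\cap f'(T^{(1)})=\emptyset$, and $\alpha'(s)\approx g(s,1/2)=\alpha(s)$ makes $\alpha'$ a good approximation of $\alpha$. The stitching identity $f'(D^2-A)\cap g'(D^2-B)=\emptyset$, together with $I\times\{1/2\}\subset D^2-B$, then yields the key containment
\[
\alpha'(I)\cap f'(\sigma)\subset f'(A\cap\sigma),
\]
a compact $0$-dimensional subset of $f'(\sigma)$ for each $\sigma\in T^{(2)}$.

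For each $\sigma\in T^{(2)}$, I would produce the required $\epsilon$-homotopy $H_\sigma$ by pushing $\alpha'$ slightly in the $t$-direction via $g'$: set
\[
H_\sigma(s,\tau)=g'\bigl(s,\,1/2+\tau\,\eta_\sigma(s)\bigr),
\]
where $\eta_\sigma:I\to(-\delta,\delta)$ is a continuous bump supported on a small neighborhood of the preimage $(\alpha')^{-1}(f'(A\cap\sigma))$, chosen so that $g'(s,1/2+\eta_\sigma(s))\notin f'(A\cap\sigma)$ for every $s\in I$. The image of $H_\sigma$ lies in $g'(D^2-B)\subset X-f'(D^2-A)\subset X-f'(T^{(1)})$, because the domain of the homotopy stays in $I\times[1/2-\delta,1/2+\delta]\subset D^2-B$; stitching forces $\alpha''=H_\sigma(\cdot,1):I\to X-f'(\sigma)$; and uniform continuity of $g'$ on this thin rectangle makes the tracks of $H_\sigma$ smaller than $\epsilon$ once $\delta$ is small.

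The main obstacle is constructing the bump $\eta_\sigma$. The preimage $(\alpha')^{-1}(f'(A\cap\sigma))$ is closed $0$-dimensional in $I$: the restriction $g'|_{I\times\{1/2\}}$ is injective by the $1$-$1$ condition, so $\alpha'$ is an embedding, and $\alpha'^{-1}$ of the $0$-dimensional set $f'(A\cap\sigma)$ is $0$-dimensional. At each point $s_0$ in this preimage, the short vertical slice $g'(\{s_0\}\times(1/2-\delta,1/2+\delta))$ is an arc in $X$ through $\alpha'(s_0)\in f'(A\cap\sigma)$; because $f'(A\cap\sigma)$ is $0$-dimensional and one may assume (after a preliminary mild perturbation of $g'$ preserving the stitching data) that the slice meets $f'(A\cap\sigma)$ in a $0$-dimensional set, some arbitrarily small nonzero value of $\tau$ yields $g'(s_0,1/2+\tau)\notin f'(A\cap\sigma)$. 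These local choices then assemble into a global continuous $\eta_\sigma$ by a partition of unity, completing the construction.
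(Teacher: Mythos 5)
Your overall strategy (thicken $\alpha$ to a constant homotopy $g$, apply closed $0$-stitched disks to $f,g$, locate a horizontal strip in $I\times I$ disjoint from $B$, and push $\alpha'$ within that strip off each $f'(\sigma)$) is the same as the paper's, but the construction of the homotopy $H_\sigma$ via a bump function $\eta_\sigma$ has a genuine gap, and the paper sidesteps exactly this difficulty.

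The central problem is the claim that $f'(A\cap\sigma)$ is $0$-dimensional and hence so is $(\alpha')^{-1}(f'(A\cap\sigma))$. The set $A\cap\sigma$ is compact and $0$-dimensional, but the image of a $0$-dimensional compactum under a continuous map need not be $0$-dimensional (a Cantor set can surject onto $[0,1]^2$), so this step is unjustified. The ``preliminary mild perturbation of $g'$'' you invoke is also unsupported: any perturbation must preserve both the stitching identity $f'(D^2-A)\cap g'(D^2-B)=\emptyset$ and the injectivity of $g'$ on the chosen infinite $1$-skeleton, and it is not clear such a perturbation exists. Even granting those two points, the partition-of-unity assembly of $\eta_\sigma$ does not yield what you need: near the boundary of its support the bump $\eta_\sigma$ takes every value between $0$ and $\eta_\sigma(s_0)$, and you have only arranged that the single value $1/2+\eta_\sigma(s_0)$ lies off $f'(A\cap\sigma)$, not that all intermediate values do. Nothing guarantees the graph of $1/2+\eta_\sigma$ stays in the complement of the (possibly large) set $Z=\{(s,t):g'(s,t)\in f'(\sigma)\}$. (There is also a secondary issue: you assume you may build the skeleton $(K_2^\infty)^{(1)}$ to contain $I\times\{1/2\}$, but the skeleta are part of the conclusion of the closed $0$-stitched disks property, not something you freely choose; the paper instead adjusts the choice of the levels $a,b$ to lie in the skeleton it is handed.)

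The paper's proof avoids all of this by never needing a per-point vertical push. It picks $J=[a,b]$ disjoint from $p(B)$ (using that $p|_B$ may be assumed $1$--$1$ since $B$ is closed and $0$-dimensional), arranges $D\times\{a,b\}\subset(K_2^\infty)^{(1)}$ so that $g'_a(D)\cap g'_b(D)=\emptyset$, sets $\delta=\operatorname{dist}\bigl(g'_a(D),g'_b(D)\bigr)$, and then chooses the cell complex $T$ fine enough that $\operatorname{diam}(f'(\sigma))<\delta$. Each $f'(\sigma)$ can then meet at most one of the arcs $g'_a(D)$, $g'_b(D)$, so the required homotopy $H_\sigma$ is either constant at $g'_a$ or the realization of $g'|_{D\times[a,b]}$. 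This is a whole-cell (rather than pointwise) choice and it needs no dimension-theoretic control on $f'(A\cap\sigma)$ at all. You should replace the bump-function argument with this two-level mechanism.
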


\begin{proof}
Let $f: D^2 \to X$ and $\alpha: I \to X$.  Let $g:D\times I \to X$ be the constant homotopy so that $g_t(x) = \alpha(x)$.  Apply the closed $0$-stitched disks property to obtain closed $0$-dimensional sets $A$ and $B$, infinite 1-skeleta $(K_j^\infty)^{(1)}$ for $j=1,2$, and approximations $f'$ and $g'$ such that $f'$ is 1-1 on $(K_1^\infty)^{(1)}$, $g'$ is 1-1 on $(K_2^\infty)^{(1)}$, and $f'(D^2-A) \cap g'(D^2-B) = \emptyset$.

Let $p: D\times I \to I$ be the natural projection map.  Without loss of generality we may assume that $p|_B$ is 1-1 (see Proposition 4.6 of \cite{Halverson 1}).  Let $J=[a,b]$ be an interval in $I$ so that $P(B) \cap J = \emptyset$.  Thus $B \cap (D\times J) = \emptyset$.  Since $g'$ is 1-1 on $(K_2^\infty)^{(1)}$,  we may also assume, by a slight modification of the level lines if necessary, that $D \times \{a,b\}$ is contained in the infinite $1$-skeleton so that $g'|_{D \times \{a,b\}}$ is 1-1.  More particulary, we need that $g'_a(D) \cap g'_b(D) = \emptyset$.   Let $\delta=dist(g'_a(D),g'_b(D))$. Let $\alpha' = g_a$.

By choice of $f'$ and $g'$, $f'(D^2-A) \cap g'(D^2-B) = \emptyset$.  Since $ D\times J \subset D^2-B$,  it follows that $(f')^{(-1)}g'(D\times J) \subset A$.  Let $\gamma>0$ be a value so that if $Z \subset D^2$ and $diam(Z)< \gamma$, then  $diam(f(Z))< \delta$.  Let $(K_1^{\infty})^{(1)} = \bigcup K_i^{(1)}$ where $K_1< K_2 < \ldots $ and $mesh(K_i) \to 0$.  Choose $m$ so that $mesh(K_m) < \gamma$.  The complex $K_m$ will be the required cell complex $T$ in the definition of the P-DADP.  Note that $f'(K_m^{(1)}) \cap g'(D\times J) = \emptyset$ since $f'^{(-1)}g'(D\times J) \subset A$ and $K_m^{(1)} \subset D^2 - A$.  Also note that $\alpha'=g_a:I \to X - f'(K_m^{(1)})$.  By choice of $\gamma$ and $i$, for each $\sigma \in K_m^{(2)}$, at least one of the cases $f'(\sigma) \cap g'(D \times \{a\})= \emptyset$ or $f'(\sigma) \cap g'(D \times \{b\})= \emptyset$ holds true.  If $f'(\sigma) \cap g'(D \times \{a\})= \emptyset$, let $H_\sigma$ be the constant path homotopy on $g'_a$.  If $f'(\sigma) \cap g'(D \times \{a\}) \ne \emptyset$, then $f'(\sigma) \cap g'(D\times \{b\}) = \emptyset$, so let $H_\sigma$ be the realization of $g'|_{D \times [a,b]}$.  The cell complex $T=K_i$ and the maps $f'$ and $\alpha'$ together with the homotopies $H_\sigma$ for $\sigma \in T^{(2)}$ demonstrate that $X$ has the P-DADP.
\end{proof}

The following is a  corollary to Theorems \ref{FM}, \ref{DADP-0SD}, and \ref{0SD-DFM}.  It also follows more directly as a corollary to Theorem \ref{FM} since the DADP implies the P-DADP trivially.

\begin{cor} \label{DADP FRP}
If $X$ is an ANR with the DADP, then $X$ has the $\delta$-FRP.
\end{cor}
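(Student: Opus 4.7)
The plan is to chain the three theorems cited in the paragraph preceding the corollary. The first step is to establish that the DADP trivially implies the P-DADP, since then the corollary is immediate from Theorem~\ref{FM}. Given $f\colon D^2\to X$, $\alpha\colon I\to X$, and $\epsilon>0$, apply the DADP to obtain $\epsilon$-approximations $f'\colon D^2\to X$ and $\alpha'\colon I\to X$ whose images are disjoint. Now choose \emph{any} cell complex $T$ of $D^2$ (for instance, a single $2$-cell). Since $\alpha'(I)\cap f'(D^2)=\emptyset$, automatically $\alpha'\colon I\to X-f'(T^{(1)})$, and for every $\sigma\in T^{(2)}$ we may take $H_\sigma$ to be the constant homotopy on $\alpha'$; this is an $\epsilon$-homotopy into $X-f'(T^{(1)})$ terminating at $\alpha''=\alpha'\colon I\to X-f'(\sigma)$. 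All of the required disjointness conditions are thus trivially satisfied, so $X$ has the P-DADP.

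With that in hand, Theorem~\ref{FM} gives the equivalence P-DADP $\Leftrightarrow$ $\delta$-FMP for ANRs, and the corollary follows.

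Alternatively, one may route through the other two theorems mentioned in the lead-in paragraph: Theorem~\ref{DADP-0SD} shows DADP implies the closed $0$-stitched disks property, Theorem~\ref{0SD-DFM} shows the closed $0$-stitched disks property implies the P-DADP, and then Theorem~\ref{FM} again converts this to the $\delta$-FMP. This longer path is logically equivalent but uses the nontrivial stitching construction in place of the observation that a DADP approximation already makes the arc and disk images disjoint.

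There is no real obstacle here: the content of the result lives in Theorem~\ref{FM}, and the present corollary only records that the DADP is a (strong) instance of the P-DADP hypothesis. The plan, therefore, is simply to write the short direct argument and cite Theorem~\ref{FM}, optionally noting the longer derivation through Theorems~\ref{DADP-0SD} and \ref{0SD-DFM} for completeness.
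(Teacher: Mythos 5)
Your proposal is correct and matches the paper's intended argument: the paper explicitly offers both routes (the direct one via the trivial implication DADP $\Rightarrow$ P-DADP plus Theorem~\ref{FM}, and the longer chain through Theorems~\ref{DADP-0SD} and \ref{0SD-DFM}), and you correctly flesh out the direct route by taking an arbitrary cell complex $T$ and constant homotopies $H_\sigma$, which is exactly why the implication is trivial. (Minor note: the conclusion in the statement reads ``$\delta$-FRP'' but, as you inferred from the cited Theorem~\ref{FM}, it should be ``$\delta$-FMP''.)
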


\section{Summary of Property Relationships}

The relationships amongst general position properties used to detect codimension one manifold factors is summarized in the chart in Figure \ref{chart}.  Equivalent properties are boxed together.  The characterizing properties are indicated in the bolded box.  An arrow implies an implication.  A filled in dot at the beginning of the arrow  indicates that the reverse implication is known to be false.  An arrow without a filled in dot at the beginning indicates that the validity of the reverse implication is at present unknown.  A filled in dot with a line not ending in an arrow indicates that the implication is not known, but the reverse implication is known to be false.

\begin{figure}
    \begin{center}
\hspace{-.15 in} \epsfig{file=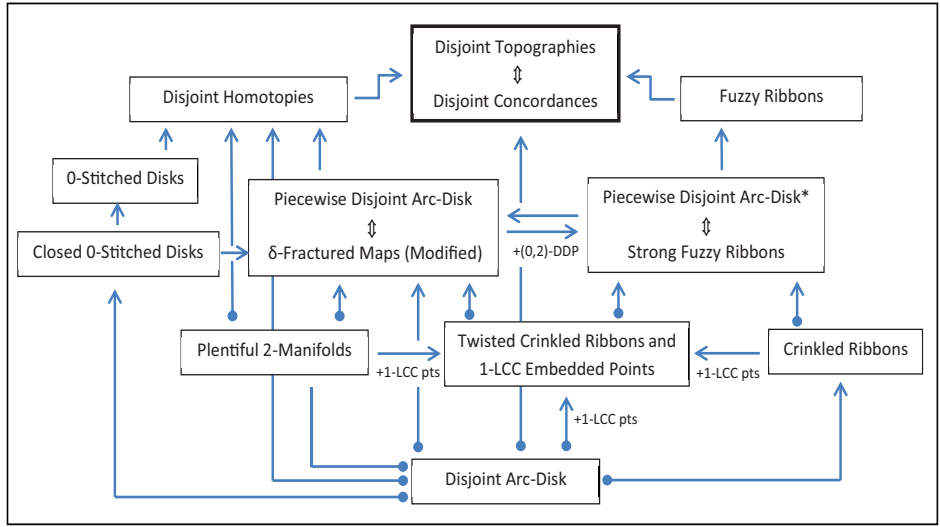, width=5.3 in, height=7.4 in} \vspace{-4 in}
\caption{Relationships amongst general position properties that detect codimension one manifold factors.} \label{chart}
    \end{center}
\end{figure}

The fact that a resolvable generalized manifold of dimension $n\geq 4$ is a codimension one manifold factor if and only if it has the disjoint path concordances property is proved in \cite{Daverman-Halverson 2}.  Thus the disjoint path concordances property provides a characterization of codimension one manifold factors.

The equivalence of the disjoint topographies property and the disjoint path concordances property was proved in \cite{HaRe2}.  Thus the disjoint topographies property also provides a characterization of codimension one manifold factors.

The fact that a resolvable generalized manifold that has the disjoint homotopies property is a codimension one manifold factor is proved in \cite{Halverson 1}.  It is unknown whether the disjoint topographies property implies the disjoint homotopies property.

The fact that a resolvable generalized manifold that has the piecewise disjoint arc-disk property also has the disjoint homotopies property and hence is a codimension one manifold factor is Theorem \ref{main thm}.  It is unknown whether the disjoint homotopies property implies the piecewise disjoint arc-disk property.

The fact that a resolvable generalized manifold that has the $0$-stitched disks property satisfies the disjoint homotopies property, and hence is a codimension one manifold factor is proved in \cite{Halverson 3}.  It is unknown whether the disjoint homotopies property implies the $0$-stitched disks property.

The fact that the closed $0$-stitched disks property implies the $0$-stitched disks property trivially follows from the definitions.  Thus Corollary \ref{C0SD} states that a resolvable generalized manifold that has the closed $0$-stitched disks property satisfies the disjoint homotopies property, and hence is a codimension one manifold factor.   It is unknown whether the $0$-stitched disks property implies the closed $0$-stitched disks property.

The fact that the closed $0$-stitched disk property implies the piecewise disjoint arc-disk property is Theorem \ref{0SD-DFM}.  It is unknown whether the reverse implication is true.

The fact that a resolvable generalized manifold that has a dense collection of $\delta$-fractured maps also has the disjoint homotopies property, and hence is a codimension one manifold factor is proved in \cite{Halverson 2}.  The fact that a resolvable generalized manifold that has the $\delta$-fractured maps property, defined in terms of the modified version of the definition of $\delta$-fractured maps, also has the disjoint homotopies property, and hence is a codimension one manifold factor is Theorem \ref{FM thm}. It is unknown whether the reverse implications are true.

The equivalence of the piecewise disjoint arc-disk property and $\delta$-fractured maps property is stated in Theorem \ref{main thm}.

The equivalence of the piecewise disjoint arc-disk property* and the strong fuzzy ribbons property is stated in Theorem \ref{pre main}.

The fact that the strong fuzzy ribbons property implies the fuzzy ribbons property trivially follows from the definition.  It is unknown whether the reverse implication is true.

The fact that the fuzzy ribbons property implies the disjoint topographies property is shown in \cite{HaRe2}.  It is unknown whether the reverse implication is true.

The fact that the piecewise disjoint arc-disk property* implies the piecewise disjoint arc-disk property trivially follows from the definition.  The reverse implication in the case of a space with the $(0,2)$-DDP is stated in Theorem \ref{P-DADP equiv}. It is unknown whether the reverse implication is true in general.

The fact that a resolvable generalized manifold of dimension $n\geq 4$ that has the plentiful $2$-manifolds property has the disjoint homotopies property, and hence is a codimension one manifold factor is proved in \cite{Halverson 1}.  The reverse implication is not true.  The $2$-ghastly space shown in \cite{Daverman-Walsh} to be codimension one manifold factors do not have the plentiful $2$-manifolds property.

The fact that the plentiful $2$-manifolds property implies the piecewise disjoint arc-disk property is stated in Theorem \ref{P2MP}.  The fact that the crinkled ribbons property implies the piecewise disjoint arc-disk property* is stated in Theorem \ref{CRP thm}.  The fact that the twisted crinkled ribbons property in the case of an ANR with $1$-LCC embedded point implies both the piecewise disjoint arc-disk property and the piecewise disjoint arc-disk property* is stated in Theorem \ref{TCRP thm} and Corollary \ref{TCRP cor}, respectively.   None of the reverse implications for these properties are true.  The $2$-ghastly space shown in \cite{Daverman-Walsh} to be codimension one manifold factors do not have any of these properties.

The fact the that the crinkled ribbons property implies the twisted crinkled ribbons property is immediate from the definitions.  The fact the plentiful $2$-manifolds property implies the twisted crinkled ribbons property in the case of a resolvable generalized manifold of dimension $n \geq 4$ is also immediate from the definitions. It is unknown whether the reverse implications are true.

The fact that a resolvable generalized manifold that has the disjoint arc-disk property is a codimension one manifold factor was first proved in \cite{Daverman 1}.  In fact, the disjoint arc-disk property  implies the disjoint homotopies property \cite{Halverson 1}. The reverse implication is not true.  The totally wild flow has the disjoint homotopies property, and therefore is a codimension one manifold factor, but fails to have the disjoint arc-disk property \cite{Cannon-Daverman2, Halverson 1}.

The fact that the disjoint arc-disk property implies the crinkled ribbons property is stated in Theorem \ref{DADP-CRP}. The fact that the disjoint arc-disk property implies the twisted crinkled ribbons property in the case of an ANR is stated in Theorem \ref{DADP-TCRP}.  The fact that the disjoint arc-disk property implies the closed $0$-stitched disks property, and hence the $0$-stitched disks property is stated in Theorem \ref{DADP-0SD}.  The fact that the disjoint arc-disk property implies the $\delta$-fractured maps property is Corollary \ref{DADP FRP}. This means that the disjoint arc-disk property implies all other properties, with the possible exception of the plentiful $2$-manifolds property.  The reverse implications are not true. There are $k$-ghastly spaces ($k>2$) that satisfy the plentiful $2$-manifolds property and the ribbons properties, but do not satisfy the disjoint arc-disk properties. There are $2$-ghastly spaces that satisfy all other properties besides the plentiful $2$-manifolds property, the crinkled ribbons property and the twisted crinkled ribbons property, but do not satisfy the disjoint arc-disk property.

\section{Epilogue}

We provide a list of several interesting problems that remain unsolved:

\begin{enumerate}
\item Does the P-DADP* imply the DTP?
\item Does the P-DADP imply the DHP?
\item Do the P-DADP or P-DADP* imply the $0$-stitched disks property?
\item Do the P-DADP or P-DADP* imply the closed $0$-stitched disks property?
\item Does the $0$-stitched disks property imply the P-DADP?
\item Does the DHP imply the $0$-stitched disks property?
\item Do $(n-2)$-dimensional decompositions arising from a defining
sequence of thickened $(n-2)$-manifolds have the P-DADP?
\item Recently, we have proved in \cite{HaRe3} that decomposition spaces resulting from decompositions of $\mathbb{R}^n$, ${n\geq 4}$, into convex sets are topologically equivalent to $\mathbb{R}^n$.  In fact, such spaces possess the DADP property.   Is there a generalization of this result utilizing the P-DADP?  For example, what about decompositions into star-like sets or sets that are homeomorphic to convex sets (such as decompositions into arcs and points)?
\item Does the P-DADP provide a characterization of resolvable generalized manifolds as codimension one manifold factors?
\item Do all resolvable generalized manifold of dimension $n\geq 4$ satisfy the P-DADP?
\end{enumerate}

In this paper we have demonstrated that, as a unifying property, the piecewise disjoint arc-disk property is a powerful tool in detecting codimension one manifold factors.  It has the potential to lead to even further insights in demonstrating that all resolvable generalized manifolds of dimension $n \geq 4$ are codimension one manifold factors or finding a counterexample, thereby solving the famous generalized R.L. Moore problem \cite{Daverman S, Bertinoro}.

\section*{Acknowledgments}

We were supported by the Slovenian Re\-search Agency grants
BI-US/11-12/023, P1-0292-0101 and J1-4144-0101, and research travel grants from Brigham Young University.
We thank the referees for several comments and suggestions.


\begin{thebibliography}{99}

\bibitem{Banakh-Valov} T.~Banakh and V.~Valov, {\it General Position Properties in Fiberwise Geometric Topology}, preprint arXiv:1001.2494.

\bibitem{Borel} A. Borel, \emph{Seminar in Transformation Groups}, Ann. of Math. Stud. {\bf 46}, Princeton Univ. Press, Princeton, N.J., 1960.

\bibitem{Cannon-Daverman2}
J.~W.~Cannon and R.~J.~Daverman, {\it  A totally wild flow}, Indiana
Univ. Math. J. {\bf 30}:3 (1981), 371-–387.

\bibitem{Daverman S}
R.~J.~Daverman, {\it Products of cell-like decomposition}, Topology
Appl. {\bf 11}:2 (1980), 121--139.

\bibitem{Daverman 1}
R.~J.~Daverman, {\it Detecting the disjoint disks property,} Pacific
J. Math. {\bf 93}:2 (1981), 277--298.

\bibitem{Daverman book}
R.~J.~Daverman, {\it Decompositions of Manifolds}, Pure and Applied
Mathematics {\bf 124}, Academic Press, Inc., Orlando, FL, 1986.

\bibitem{Daverman-Halverson 2}
R.~J.~Daverman and D.~M.~Halverson, {\it Path concordances as
detectors of codimension-one manifold factors}, Exotic Homology
Manifolds, Oberwolfach 2003, Geom. Topol. Monogr. {\bf 9}, Geom.
Topol. Publ., Coventry, 2006, pp. 7--15.

\bibitem{Daverman-Halverson}
R.~J.~Daverman and D.~M.~Halverson, {\it The cell-like approximation
theorem in dimension $n = 5$,} Fund. Math. {\bf 197} (2007),
81--121.

\bibitem{Daverman-Walsh}
R.~J.~Daverman and J.~J.~Walsh, {\it A ghastly generalized
$n$-manifold}, Illinois J. Math. {\bf 25}:4 (1981), 555--576.

\bibitem{Edwards 2}
R.~D.~Edwards, {\it The topology of manifolds and cell-like maps},
Proc. Int. Congr. of Math., Helsinki, 1978, Acad. Sci. Fennica,
Helsinki, 1980, pp. 111--127

\bibitem{Halverson 1}
D.~M.~Halverson, {\it Detecting codimension one manifold factors
with the disjoint homotopies property,} Topology Appl. {\bf 117}:3
(2002), 231--258.

\bibitem{Halverson 2}
D.~M.~Halverson, {\it $2$-ghastly spaces with the disjoint
homotopies property: the method of fractured maps,} Topology Appl.
{\bf 138}:1-3 (2004), 277--286.

\bibitem{Halverson 3}
D.~M.~Halverson, {\it Detecting codimension one manifold factors
with 0-stitched disks}, Topology Appl. {\bf 154}:9  (2007),
1993--1998.

\bibitem{HaRe2}
D.~M.~Halverson and D.~Repov\v{s}, {\it  Detecting codimension one
manifold factors with topographical techniques}, Topology Appl. {\bf
156}:17 (2009), 2870--2880.

\bibitem{Bertinoro}
D.~M.~Halverson and D.~Repov\v{s},
{\em A survey on the generalized R.L. Moore Problem},
Proc. Conf. Comp. and Geom. Topol. (Bertinoro, Italy, June 17-19, 2010),
Atti Semin. Mat. Fis. Univ. Modena Reggio Emilia
{\bf 58} (2011), 175-191.

\bibitem{HaRe3}
D.~M.~Halverson and D.~Repov\v{s},
{\em Decompositions of $\mathbb{R}$, $n \geq 4$, into convex sets generate
codimension one manifold factors},  Mediterranean J. Math., to appear.

\bibitem{MiRe}
W.~J.~R.~Mitchell and D.~Repov\v s, {\it Topology of cell-like
mappings}, Proc.~Conf.~Diff.~Geom.~and Topol., Cala Gonone 1988,
Suppl.~Rend.~Fac.~Sci.~Nat.~Univ.~Cagliari {\bf 58} (1988),
265--300.

\bibitem{Moore1}
R.~L.~Moore, {\it Concerning upper semicontinuous collections of
continua which do not separate a given set}, Proc. Nat. Acad. Sci.
{\bf 10}:8 (1924), 356--360.

\bibitem{Moore2}
R.~L.~Moore, {\it Concerning upper semicontinuous collections of
continua,} Trans. Amer. Math. Soc. {\bf 27}:4 (1925), 416--428.

\bibitem{rourke-sanderson}
C.~P.~Rourke and B.~J.~Sanderson, {\it Introduction to
Piecewise-Linear Topology}, Ergebnisse der Mathematik und ihrer
Grenzgebiete {\bf 69}, Springer-Verlag, New York-Heidelberg, 1972.


\end{thebibliography}
\end{document}